\documentclass[10pt]{amsart}

\usepackage{fullpage}
\usepackage{amssymb}
\usepackage{amsmath}
\usepackage{amsxtra}
\usepackage{amscd}
\usepackage{graphicx}
\usepackage{amsfonts}
\usepackage{pb-diagram}
\usepackage{float}
\usepackage{enumerate}
\usepackage{dsfont}

\numberwithin{equation}{section}
\setcounter{tocdepth}{1}
\newtheorem{thm}{Theorem}[section]
\newtheorem{lem}[thm]{Lemma}

\newtheorem{prop}[thm]{Proposition}

\newtheorem{defn}[thm]{Definition}

\newtheorem{exmp}[thm]{Example}

\newtheorem{rems}[thm]{Remarks}
\newtheorem{rem}[thm]{Remark}

\def\C{\mathbb{C}}
\def\Z{\mathbb{Z}}

\def\og{\mathsf{g}}

\def\cR{\mathcal{R}}
\def\cF{\mathcal{F}}

\newcommand{\blambda}{\boldsymbol{\lambda}}

\newcommand{\bmu}{\boldsymbol{\mu}}
\newcommand{\btau}{\boldsymbol{\tau}}

\newcommand{\btheta}{\boldsymbol{\theta}}

\newcommand{\cc}{\mathrm{c}}
\newcommand{\ccc}{\mathrm{cc}}
\newcommand{\pos}{\mathrm{p}}
\newcommand{\posd}{\mathrm{p}^{(d)}}
\newcommand{\posm}{\mathrm{p}^{(m)}}
\newcommand{\gad}{\Gamma_d}
\def\cT{\mathcal{T}}
\def\bv{\mathbf{v}}
\def\tX{\widetilde{X}}
\def\tr{\mathrm{tr}}

\def\og{\overline{g}}
\def\oX{\overline{X}}
\def\oq{\overline{q}}

\begin{document}

\title[Markov traces on affine and cyclotomic Yokonuma--Hecke algebras]
  {Markov traces on affine and cyclotomic\\ Yokonuma--Hecke algebras}
\author{Maria Chlouveraki}
\address{Laboratoire de Math\'ematiques UVSQ, B\^atiment Fermat, 45 avenue des \'Etats-Unis,  78035 Versailles cedex, France.}
\email{maria.chlouveraki@uvsq.fr}

\author{Lo\"ic Poulain d'Andecy}
\address{Korteweg-de Vries Institute for Mathematics, University of Amsterdam\\
P.O. Box 94248, 1090 GE Amsterdam, The Netherlands.}
\email{L.B.PoulainDAndecy@uva.nl}

\subjclass[2010]{20C08, 20F36, 05E10, 57M27, 57M25}

\thanks{We are grateful to Sofia Lambropoulou and Jes\'us Juyumaya for their useful suggestions and corrections to this paper. We would also like to thank Vincent S\'echerre for our  fruitful conversations and his interesting remarks concerning the affine case.
The second author was supported by ERC-advanced grant no.~268105. For the first author, this research project is implemented within the framework of the Action ``Supporting Postdoctoral Researchers'' of the Operational Program ``Education and Lifelong Learning'' (Action's Beneficiary: General Secretariat for Research and Technology), and is co-financed by the European Social Fund (ESF) and the Greek State. }

\keywords{Affine Yokonuma--Hecke algebra, Cyclotomic Yokonuma--Hecke algebra, Framisation, Representations, Basis, Markov trace, Framed knots in the solid torus, Knot invariants, Symmetrising trace, Schur elements.}

\begin{abstract}
In this article, we define and study the affine and cyclotomic Yokonuma--Hecke algebras. These algebras generalise at the same time the Ariki--Koike and affine Hecke algebras and the Yokonuma--Hecke algebras. We study the representation theory of these algebras and construct several bases for them. We then show how we can define Markov traces on them, which we in turn use to construct invariants for framed and classical knots in the solid torus. Finally, we 
study the Markov trace with zero parameters on the cyclotomic Yokonuma--Hecke algebras and determine the Schur elements with respect to that trace. 
\end{abstract}

\maketitle

\tableofcontents

\section{Introduction}\label{sec-intro}

Ariki--Koike algebras were introduced by Ariki and Koike \cite{ArKo} as generalisations of the Iwahori--Hecke algebras of types $A$ and $B$. Following the definition of Hecke algebras associated with complex reflection groups as quotients of their braid group algebras by Brou\'e, Malle and Rouquier \cite{BMR}, Ariki--Koike algebras ${\rm H}(m,n)$ can be viewed as the Hecke algebras associated with the complex reflection groups of type $G(m,1,n)$. Hence, they are quotients of an affine braid group algebra (of type A) 
and deformations of the group algebra of $G(m,1,n)$. The complex reflection group $G(m,1,n)$ is isomorphic to the wreath product $(\Z/m\Z) \wr \mathfrak{S}_n$, so for $m=1$ and $m=2$ Ariki--Koike algebras are the Iwahori--Hecke algebras of types $A$ and $B$ respectively. Moreover, the Iwahori--Hecke algebra ${\rm H}(n)$ of type $A$ is an obvious subalgebra of the Ariki--Koike algebra ${\rm H}(m,n)$ for any $m$. 

The representation theory of Ariki--Koike algebras has been studied in the original paper by Ariki and Koike, where a basis for these algebras is also given. Other, inductive bases for Ariki--Koike algebras have been given by Lambropoulou in \cite{La2} and Ogievetsky and the second author in \cite{OgPo1}. 

Yokonuma--Hecke algebras were introduced by Yokonuma \cite{yo} in the context of Chevalley groups, also as generalisations of Iwahori--Hecke algebras. More precisely, the Iwahori--Hecke algebra associated to a finite Chevalley group $G$ is the centraliser algebra associated to the permutation representation of $G$ with respect to a Borel subgroup of $G$. The Yokonuma--Hecke algebra is the centraliser algebra associated to the permutation representation of $G$ with respect to a maximal unipotent subgroup of $G$. Thus,  Yokonuma--Hecke algebras can be also regarded as particular cases of unipotent Hecke algebras.

The Yokonuma--Hecke algebra ${\rm Y}(d,n)$ of type $A$ ($G={\rm GL}_n(\mathbb{F}_{q^2})$) is a quotient of the group algebra of the modular framed braid group $(\Z/d\Z) \wr B_n$, where $B_n$ is the classical braid group on $n$ strands (of type $A$). In recent years, the presentation of the algebra ${\rm Y}(d,n)$  has been transformed  by Juyumaya  \cite{ju, juka, ju2} to the one used in this paper. From this presentation, it becomes obvious that the algebra ${\rm Y}(d,n)$ is a deformation of the group algebra of $G(d,1,n)$ which respects the wreath product structure, unlike the Ariki--Koike algebra. For $d=1$, the algebra  ${\rm Y}(1,n)$ coincides with the Iwahori--Hecke algebra $\mathrm{H}(n)$ of type $A$. However, for $d>1$, $\mathrm{H}(n)$ is not an obvious subalgebra of ${\rm Y}(d,n)$.

A basis for the Yokonuma--Hecke algebra  ${\rm Y}(d,n)$ has been constructed by Juyumaya in \cite{ju2}. 
Some information on its representation theory in the general context of unipotent Hecke algebras has been obtained by Thiem in \cite{thi, thi2,thi3}. 
In our previous paper \cite{ChPo}, we developed an inductive, and highly combinatorial, approach to the representation theory of the Yokonuma--Hecke algebra of type $A$. We gave explicit formulas for the representations of  ${\rm Y}(d,n)$, and in order to do this, we introduced and studied, what turned out to be, the affine Yokonuma--Hecke algebra ${\rm Y}(d,\infty,2)$. 

In this paper, we define and study the algebra ${\rm Y}(d,m,n)$, which we call \emph{affine Yokonuma--Hecke algebra} when $m=\infty$ and \emph{cyclotomic Yokonuma--Hecke algebra} when $m \in \Z_{>0}$. This algebra generalises both ${\rm H}(m,n)$ (for $m=\infty$, we consider ${\rm H}(\infty,n)$ to be the affine Hecke algebra of ${\rm GL}_n$) 
and ${\rm Y}(d,n)$, which are quotients of ${\rm Y}(d,m,n)$. Further, for $d=1$, ${\rm Y}(1,m,n)$ coincides with ${\rm H}(m,n)$, while, for $m=1$, ${\rm Y}(d,1,n)$ coincides with 
${\rm Y}(d,n)$. The existence of these algebras has been first mentioned by Juyumaya and Lambropoulou in \cite{jula6}, where they refer to them as ``modular framisations'' of the 
generalised Hecke algebra of type $B$ (which is the affine Hecke algebra of ${\rm GL}_n$) 
and the cyclotomic Hecke algebra respectively. For a complete survey on the framisation of algebras with applications in knot theory, the reader may refer to \cite{jula5}.
Furthermore, the affine Yokonuma--Hecke algebra ${\rm Y}(d,\infty,n)$ can be seen as the centraliser algebra associated to the permutation representation of $p$-adic ${\rm GL}_n$
with respect to a pro-$p$-Iwahori subgroup, which is the pro-$p$-Iwahori Hecke ring  studied by Vign\'eras in \cite{Vi1}.

In the third section of this paper, we give an explicit description, in combinatorial terms, of the irreducible representations of the cyclotomic Yokonuma--Hecke algebra  ${\rm Y}(d,m,n)$ (case $m < \infty$).  The formulas for the action of the generators  generalise and unify two known situations. These two situations are two different generalisations of classical constructions for the Iwahori--Hecke algebra of type $A$. One one hand, for $d>1$, the formulas for ${\rm Y}(d,m,n)$ generalise the formulas obtained in \cite{ArKo} for the Ariki--Koike algebra ${\rm Y}(1,m,n)$ (for $d=1$ and $m=1,2$, the formulas already appear in the classical work of Hoefsmit \cite{Ho}). On the other hand, for $m>1$, we obtain a generalisation of the formulas in \cite{ChPo} for the Yokonuma--Hecke algebra ${\rm Y}(d,1,n)$. 

In the fourth section, we provide several generating sets for both affine and cyclotomic Yokonuma--Hecke algebras. Using  
the knowledge of the dimension of the irreducible representations for finite $m$, we are able to show that  
these spanning sets are bases of 
${\rm Y}(d,m,n)$ for every $m$ (in the affine situation, this is deduced from the results in the cyclotomic case).
One of the bases is the analogue of the Ariki--Koike basis for the Ariki--Koike algebra (and of the Bernstein basis for the affine Hecke algebra). The other ones are inductive bases, so they are well-adapted to the study of the whole chain of cyclotomic, or affine, Yokonuma--Hecke algebras.  They are the analogues of the inductive bases for the Ariki--Koike algebra given in \cite{La2, OgPo1}.
Finally, we use the results of this section to conclude that the representations constructed in Section $3$ form a complete set of pairwise non-isomorphic irreducible representations and to obtain a semisimplicity criterion for the cyclotomic Yokonuma--Hecke algebra.

Having constructed a basis, we proceed in Section $\ref{sec-markov}$ to the definition of a Markov trace on ${\rm Y}(d,m,n)$.    
This definition encompasses the definition of Markov traces both on Ariki--Koike algebras (by Ocneanu/Jones for $m=1$ \cite{Jo}, by Geck and Lambropoulou for $m=2$ 
\cite{La1, Gela},  and by Lambropoulou for $m \geq 3$ \cite{La2}) and on Yokonuma--Hecke algebras of type $A$ (by Juyumaya \cite{ju2}).  The Markov trace is 
obtained as the composition of certain ``relative traces'', whose study yields the existence and uniqueness of the Markov trace
for every choice of parameters. 
These relative traces in the case of  Ariki--Koike algebras and the affine Hecke algebra of ${\rm GL}$
are used to construct commutative Bethe subalgebras \cite{IsOg,IsKi}, which play a fundamental role in the theory of chain or Gaudin models; see, for example, \cite{MTV} for the symmetric group case. For ${\rm H}(m,n)$ (that is, for $d=1$), such relative traces have been explicitly constructed in \cite{OgPo1}. For the Yokonuma--Hecke algebras ($d>1$ and $m=1$), the relative traces provide an alternative approach to the Markov trace defined in \cite{ju2}.

In Section \ref{sec-inv}, we use the Markov trace that we constructed in order to define knot invariants. 
Jones was the first to use a Markov trace to define an invariant for classical knots and links (Jones polynomial, which in turn led to the HOMFLYPT polynomial with the use of the Ocneanu trace), using Alexander's theorem which states that every link can be represented by a braid.    
In \cite{La1} Lambropoulou showed that every link in the solid torus can be represented by an element of the affine braid group. Using the Markov traces constructed in  \cite{La1,Gela,La2}, she defined invariants  for knots and links in the solid torus. 
Finally, Juyumaya and Lambropoulou \cite{jula2} used Juyumaya's trace and the fact that the Yokonuma--Hecke algebra is a quotient of the framed braid group algebra to construct invariants for framed knots and links (which become invariants for classical knots and links if we forget the framings, see \cite{jula3}).
So it is only natural to use the Markov trace on ${\rm Y}(d,m,n)$ to define invariants for framed knots and links in the solid torus
(which become invariants for classical knots and links in the solid torus if we forget the framings).
For this, we prove an analogue of Alexander's theorem for framed knots and links in the solid torus and we impose a certain condition on the parameters of the Markov trace, the \emph{affine E-condition}, which is a generalisation of the E-condition of Juyumaya and Lambropoulou \cite{jula2}.
We note that, following recent results obtained in \cite{CJKL}, the invariants for classical  links from the Yokonuma--Hecke algebra distinguish topologically more links than the HOMFLYPT polynomial. This implies that the invariants we obtain here for classical links in the solid torus provide more topological information than the invariants defined in \cite{La1,Gela,La2}.

Finally, in Section \ref{sec-sym}, we restrict again ourselves to the case where $m < \infty$.  We study a very special Markov trace on ${\rm Y}(d,m,n)$, the one where all parameters are equal to $0$. This Markov trace generalises both the canonical 
symmetrising trace on the Ariki--Koike algebra ${\rm H}(m,n)$ constructed by Bremke and Malle \cite{BM, MaMa, GIM} and the canonical 
symmetrising trace on the Yokonuma--Hecke algebra ${\rm Y}(d,n)$ constructed in \cite{ChPo}. 
We compute the Schur elements for the cyclotomic Yokonuma--Hecke algebra  with respect to this trace by showing that they can be expressed as products of Schur elements 
for Ariki--Koike algebras, which are already known \cite{GIM, Ma, ChJa}.

\subsection*{Notation} We set $E_m:=\{0,\ldots,m-1\}$ for $m\in\Z_{>0}$, and $E_{\infty}:=\Z$.

Let $q$ and $v_a$, $a\in\Z_{>0}$, be indeterminates and set $\cR_m:=\C[q^{\pm1},v_1^{\pm1},\ldots,v_m^{\pm1}]$ for $m\in\Z_{>0}$, and $\cR_{\infty}:=\C[q^{\pm1}]$. We denote by $\cF_m$ the field of fractions of $\cR_m$.

\section{Affine and cyclotomic Yokonuma--Hecke algebras}\label{sec-def}

Let  $d \in\Z_{>0}$, $m\in\Z_{>0}\cup\{\infty\}$ and $n\in \Z_{>0}$.  We denote by ${\rm Y}(d,m,n)$ the associative algebra over $\cR_m$ generated by elements
$$t_1,\ldots,t_n,g_1,\ldots,g_{n-1},X_1^{\pm1}$$
subject to the following defining relations:
\begin{equation}\label{def-aff1}
\begin{array}{rclcl}
g_ig_j & = & g_jg_i && \mbox{for all $i,j=1,\ldots,n-1$ such that $\vert i-j\vert > 1$,}\\[0.1em]
g_ig_{i+1}g_i & = & g_{i+1}g_ig_{i+1} && \mbox{for  all $i=1,\ldots,n-2$,}\\[0.1em]
t_it_j & =  & t_jt_i &&  \mbox{for all $i,j=1,\ldots,n$,}\\[0.1em]
t_jg_i & = & g_it_{s_i(j)} && \mbox{for all $i=1,\ldots,n-1$ and $j=1,\ldots,n$,}\\[0.1em]
t_j^d   & =  &  1 && \mbox{for all $j=1,\ldots,n$,}\\[0.2em]
g_i^2  & = & 1 + (q-q^{-1}) \, e_{i} \, g_i && \mbox{for  all $i=1,\ldots,n-1$,}
\end{array}
\end{equation}
where   $s_i$ is the transposition $(i,i+1)$ and
$$e_i :=\frac{1}{d}\sum\limits_{s=0}^{d-1}t_i^s t_{i+1}^{-s}\,,$$
together with the following relations concerning  the 
generator $X_1$: 
\begin{equation}\label{def-aff2}
\begin{array}{rclcl}
X_1\,g_1X_1g_1 & =  & g_1X_1g_1\,X_1  &&\\[0.1em]
X_1g_i & = & g_iX_1 && \mbox{for all $i=2,\ldots,n-1$,}\\[0.1em]
X_1t_j & = & t_jX_1 && \mbox{for all $j=1,\ldots,n$,}\\[0.1em]
(X_1-v_1)\cdots(X_1-v_m) & = & 0 && \mbox{if $m<\infty$.}
\end{array}
\end{equation}
The algebra ${\rm Y}(d,\infty,n)$ was called in \cite{ChPo} the \emph{affine Yokonuma--Hecke algebra}. For $m<\infty$, we call the algebra ${\rm Y}(d,m,n)$ the \emph{cyclotomic Yokonuma--Hecke algebra}. 
These algebras are isomorphic to the modular framisations of,  respectively, the affine Hecke algebra ($m=\infty$) and the cyclotomic Hecke algebra ($m<\infty$); see definitions in \cite[Section 6]{jula5} and Remark 1 in \cite{ChPo}.   
For any $m$, we define $\cF_m{\rm Y}(d,m,n):=\cF_m\otimes_{\cR_m}{\rm Y}(d,m,n)$.

Note that the elements $e_i$ are idempotents in ${\rm Y}(d,m,n)$,
that we have $g_ie_i=e_ig_i$ for all $i=1,\dots,n-1$, 
and that the elements $g_i$ are invertible, with
\begin{equation}
g_i^{-1} = g_i - (q- q^{-1})\, e_i  \qquad \mbox{for all $i=1,\ldots,n-1$}.
\end{equation}
Moreover, if $m<\infty$, we can write the last relation in (\ref{def-aff2}) as 
\begin{equation}\label{X1m}
X_1^m+\gamma_{m-1}^{(m)}X_1^{m-1}+\dots+\gamma_1^{(m)}X_1+\gamma_0^{(m)}=0\,,
\end{equation}
where $\gamma_0^{(m)},\gamma_1^{(m)},\ldots,\gamma_{m-1}^{(m)}\in\cR_m$. Note that $\gamma_0^{(m)}=(-1)^mv_1\dots v_m$ is invertible in $\cR_m$. Thus, we have
\begin{equation}\label{inv-X1}
X_1^{-1}=-\frac{1}{\gamma_0^{(m)}}\Bigl( X_1^{m-1}+\gamma_{m-1}^{(m)}X_1^{m-2}+\dots + \gamma_2^{(m)}X_1+\gamma_1^{(m)} \Bigr)\in\cR_m[X_1]\,,
\end{equation}
and, in particular, $X_1^{-1}$ can be removed from the set of generators when $m<\infty$.

\vskip .2cm
We recall that the Yokonuma--Hecke algebra, defined by Yokonuma in  \cite{yo}, of type $A$ is the associative algebra over $\C[q^{\pm1}]$ generated by elements $t'_1,\ldots,t'_n,g'_1,\ldots,g'_{n-1}$ with the defining relations as in (\ref{def-aff1}) with $g_i$ replaced by $g'_i$ and $t_i$ replaced by $t'_i$ \cite{ju,juka,ju2}. We denote by ${\rm Y}_m(d,n)$ the Yokonuma--Hecke algebra with the ground ring extended to $\cR_m$.
There is a surjective homomorphism $\pi_{\rm Y}$ from the algebra ${\rm Y}(d,m,n)$ onto the algebra ${\rm Y}_m(d,n)$ given on generators by:
\begin{equation*}\label{surj-pi}
\pi_{\rm Y}(t_j)=t'_j\,,\ j=1,\ldots,n,\quad\pi_{\rm Y}(g_i)=g'_i\,,\ i=1,\ldots,n-1,\quad\text{and}\quad
\pi_{\rm Y}(X_1)=\left\{\begin{array}{ll}\!\!\!v_1& \text{if $m<\infty$}\\
\!\!\!1& \text{if $m=\infty$}\end{array}\right..
\end{equation*}
The fact that $\pi_{\rm Y}$ defines an algebra homomorphism follows from the fact that the relations (\ref{def-aff2}) are trivially satisfied if $X_1$ is replaced by $v_1$ if $m<\infty$ and by $1$ if $m=\infty$.

Now let $\iota_{\rm Y}$ be the morphism from the algebra ${\rm Y}_m(d,n)$ to the algebra ${\rm Y}(d,m,n)$, defined by:
 \begin{equation}\label{inj-i}
 \iota_{\rm Y}(g'_i)=g_i\ \ \ \text{for $i=1,\ldots,n-1$}\quad\ \ \text{and}\ \ \quad  \iota_{\rm Y}(t'_j)=t_j\ \ \ \text{for $j=1,\ldots,n$.}
\end{equation}
The composition $\pi_{\rm Y}\circ\iota_{\rm Y}$ is the identity morphism of ${\rm Y}_m(d,n)$ and this implies that $\iota_{\rm Y}$ is injective. Thus, the subalgebra of ${\rm Y}(d,m,n)$ generated by $t_1,\ldots,t_n,g_1,\ldots,g_{n-1}$ is isomorphic to ${\rm Y}_m(d,n)$. In the paper, we will refer to ${\rm Y}_m(d,n)$ as the Yokonuma--Hecke algebra and implicitly use the isomorphism with the subalgebra of ${\rm Y}(d,m,n)$ generated by $t_1,\ldots,t_n,g_1,\ldots,g_{n-1}$. For $m=1$, the algebra ${\rm Y}(d,1,n)$ coincides with ${\rm Y}_1(d,n)$. 

\begin{rems} \label{rem1}{\rm 
{\bf (a)} The first four relations in (\ref{def-aff1}) are defining relations for the classical framed braid group $\Z\wr B_n$, where $B_n$ is the classical braid group on $n$ strands, with the $t_j$'s being interpreted as the ``elementary framings" (framing 1 on the $j$th strand). The quotient of $\Z\wr B_n$ over the relations $t_j^d = 1$ is the modular framed braid group $(\Z/d\Z) \wr B_n$ (the framing of each braid strand is regarded modulo~$d$). Thus, the Yokonuma--Hecke algebra is the quotient of the modular framed braid group algebra over the last relation in (\ref{def-aff1}).\\
{\bf (b)} The first two relations in (\ref{def-aff1}) together with the first two  relations in (\ref{def-aff2}) are defining relations for the affine braid group $B_n^{\mathrm{aff}}$ (with generators $X_1, g_1,\ldots,g_{n-1}$). Adding the generators $t_1,\ldots,t_n$, we obtain the framed affine braid group $\Z\wr B_n^{\mathrm{aff}}$ by 
 considering as defining relations the first four relations in (\ref{def-aff1}) together with the first three relations in (\ref{def-aff2}). The quotient of $\Z\wr B_n^{\mathrm{aff}}$ over the relations $t_j^d = 1$ is the modular framed affine braid group $(\Z/d\Z) \wr B_n^{\mathrm{aff}}$. Thus, the affine and cyclotomic Yokonuma--Hecke algebras are quotients of the modular framed affine braid group algebra over $\cR_m$.
These braid groups and their connections with knots and links will be described more precisely in Section \ref{sec-inv}. 
\hfill $\triangle$}\end{rems}

We denote by ${\rm H}(m,n)$ the quotient of the algebra ${\rm Y}(d,m,n)$ over the relations $t_j=1$, $j=1,\ldots,n$. If $m<\infty$, the algebra ${\rm H}(m,n)$ is the Ariki--Koike algebra, also called the cyclotomic Hecke algebra of type $G(m,1,n)$, while if $m=\infty$, the algebra ${\rm H}(\infty,n)$ is the affine Hecke algebra of 
${\rm GL}$.  For $d=1$, the algebra ${\rm Y}(1,m,n)$ coincides with ${\rm H}(m,n)$.

Let $\pi_{\rm H}$ be the natural surjective morphism from ${\rm Y}(d,m,n)$ to its quotient ${\rm H}(m,n)$. The image of the subalgebra of ${\rm Y}(d,m,n)$ generated by $t_1,\ldots,t_n,g_1,\ldots,g_{n-1}$ under the map $\pi_{\rm H}$ is denoted by ${\rm H}_m(n)$. It is well-known that the subalgebra ${\rm H}_m(n)$ of ${\rm H}(m,n)$ is isomorphic to the finite Hecke algebra of type $A$ (over the ring $\cR_m$). In other words, the Hecke algebra ${\rm H}_m(n)$ is the quotient of the Yokonuma--Hecke algebra ${\rm Y}_m(d,n)$ by the relations $t_j=1$, $j=1,\ldots,n$. The following diagram summarises the different algebras and their connections (vertical arrows correspond to the projection $\pi_{\rm H}$; horizontal arrows correspond to the injection $\iota_{\rm Y}$):

\begin{equation}\label{diagram}
\begin{array}{ccc} {\rm Y}_m(d,n) & \hookrightarrow & {\rm Y}(d,m,n) \\[0.5em]
\downarrow&&\downarrow\\[0.5em]
{\rm H}_m(n) &  \hookrightarrow  &  {\rm H}(m,n)
\end{array}
\end{equation}

\vskip .1cm
\begin{rems}\label{rem-degeneration}
{\rm {\bf (a)} For $q^2=p^k$, where $p$ is a prime number and $k \in \mathbb{Z}_{>0}$, and $F$ a local non-archimedean field (a finite extension of $\mathbb{Q}_p$ or a field of
Laurent series $\mathbb{F}_{q^2}((t))$) of residual field $\mathbb{F}_{q^2}$, 
the affine Hecke algebra ${\rm H}(\infty,n)$ is the centraliser algebra associated to the permutation representation of ${\rm GL}_n(F)$ with respect to an Iwahori subgroup. 
For $d=p^k-1$, the affine Yokonuma--Hecke algebra ${\rm Y}(d,\infty,n)$ is isomorphic to the pro-$p$-Iwahori Hecke ring of ${\rm GL}_n(F)$, that is, 
the centraliser algebra associated to the permutation representation of ${\rm GL}_n(F)$ with respect to a pro-$p$-Iwahori subgroup.
A presentation of the pro-$p$-Iwahori Hecke ring of ${\rm GL}_n(F)$ by generators and relations, and bases for it as a module over $\mathcal{R}_\infty$ and $\Z$,
are given by Vign\'eras in \cite{Vi1}. A more general definition of a pro-$p$-Iwahori Hecke ring, using generators and relations,  is given and studied by Vign\'eras in her preprints \cite{Vi2,Vi3,Vi4}.
The affine Yokonuma--Hecke algebra ${\rm Y}(d,\infty,n)$ should be a particular case of a pro-$p$-Iwahori Hecke ring.
We are grateful to Vincent S\'echerre for pointing all this out to us.}\\
{\rm {\bf (b)}
The degenerate (or graded) affine Hecke algebra of GL can be obtained as a certain degeneration, when $q$ tends to $\pm1$, of the affine Hecke algebra ${\rm H}(\infty,n)$ \cite{Dr}. In \cite{WaWa}, an analogue of the degenerate affine Hecke algebra of GL is associated to the wreath product of an 
arbitrary finite group $G$ by the symmetric group; see also \cite{RaSh} when $G$ is a cyclic group. This algebra (when $G$ is the cyclic group of order $d$) can be seen as a degeneration, when $q$ tends to $\pm1$, of the affine Yokonuma--Hecke algebra ${\rm Y}(d,\infty,n)$; see \cite[Remark 2]{ChPo}.
\hfill $\triangle$}\end{rems}

Let $w \in \mathfrak{S}_n$, where $\mathfrak{S}_n$ is the symmetric group on $n$ letters, and let $w=s_{i_1}s_{i_2}\ldots s_{i_r}$ be a reduced expression for $w$ (where $s_i$ denotes the transposition $(i,i+1)$ for all $i=1,\ldots,n-1$).
Since the generators $g_i$ of ${\rm Y}(d,m,n)$ satisfy the same braid relations as the generators of $\mathfrak{S}_n$, Matsumoto's lemma implies that the element 
\begin{equation}\label{def-gw}g_w:=g_{i_1}g_{i_2}\ldots g_{i_r}
\end{equation} 
is well-defined, that is, it does not depend on the choice of the reduced expression of $w\in \mathfrak{S}_n$.

\vskip .2cm
We define inductively elements $X_2,\ldots,X_n$ of ${\rm Y}(d,m,n)$ by
\begin{equation}\label{rec-X}
X_{i+1}:=g_iX_ig_i\ \ \ \ \text{for $i=1,\ldots,n-1$.}
\end{equation}
In \cite[Lemma 1]{ChPo}, it is proved that,
 for any $i\in\{1,\ldots,n\}$, we have: 
\begin{equation}\label{g-X}
g_jX_i=X_ig_j\ \ \ \text{for $j=1,\ldots,n-1$ such that $j\neq i-1,i$,}
\end{equation}
Moreover, we have that the elements $t_1,\ldots,t_n,X_1,\ldots,X_n$ form a commutative family  \cite[Proposition 1]{ChPo}, that is,
\begin{equation}\label{X-X} xy=yx\ \quad\text{for any $x,y\in\{t_1,\ldots,t_n,X_1,\ldots,X_n\}$\,.}
\end{equation}
Finally, we have 
\begin{equation}\label{g-XX} g_iX_iX_{i+1}=g_iX_ig_iX_ig_i=X_{i+1}X_ig_i=X_iX_{i+1}g_i\ \ \ \ \ \ \text{for $i=1,\ldots,n-1$.}
\end{equation}

\vskip .1cm
We record here some formulas that we will need in the rest of this paper:
\begin{lem}\label{lem-form}
We have the following identities satisfied in ${\rm Y}(d,m,n)$ ($i=1,\ldots,n-1$):
\begin{equation}\label{formAK}
g_iX_i^aX_{i+1}^b=\left\{\begin{array}{ll} \!\!X_i^bX_{i+1}^ag_i-(q-q^{-1})e_i\sum\limits_{k=1}^{a-b}X_i^{a-k}X_{i+1}^{b+k} & \text{if $a\geq b$,}\\[1.4em]
\!\!X_i^bX_{i+1}^ag_i+(q-q^{-1})e_i\sum\limits_{k=0}^{b-a-1}X_i^{a+k}X_{i+1}^{b-k} & \text{if $a\leq b$,}
\end{array}\right.\ \qquad a,b\in\Z\,,
\end{equation}
\begin{equation}\label{form1}
X_1\,g_1X_1^{a}t_1^bg_1=g_1X_1^{a}t_1^bg_1\,X_1+(q-q^{-1})e_1(X_1t_1^b\,g_1X_1^{a}-X_1^{a}t_1^b\,g_1X_1),\ \qquad a,b\in\Z\,, 
\end{equation}
\begin{equation}\label{form3}
X_1\,g_1^{-1}X_1^{a}t_1^bg_1=g_1^{-1}X_1^{a}t_1^bg_1\,X_1+(q-q^{-1})e_1(X_1t_1^b\,g_1X_1^{a}-X_1^{a+1}t_1^b\,g_1),\ \qquad a,b\in\Z\,.
\end{equation}
\end{lem}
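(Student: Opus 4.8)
The plan is to prove the three identities by direct manipulation using only the defining relations \eqref{def-aff1}, \eqref{def-aff2}, the inductive definition \eqref{rec-X}, and the commutativity relations \eqref{g-X}, \eqref{X-X}, \eqref{g-XX}. All three have the same flavour: one pushes $g_i$ (or $g_1$, or $g_1^{-1}$) past a monomial in $X_i, X_{i+1}$ (resp.\ $X_1, t_1$), picking up correction terms involving the idempotent $e_i$, and then collapses them by induction on the exponents. I expect the first identity \eqref{formAK} to be the workhorse, with \eqref{form1} and \eqref{form3} following from it together with relation \eqref{form1}'s cousin \eqref{g-XX} and the defining braid relation for $X_1$.

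For \eqref{formAK}, first I would establish the base case. Since $X_{i+1}=g_iX_ig_i$, we get $g_iX_i = X_{i+1}g_i^{-1} = X_{i+1}g_i - (q-q^{-1})e_iX_{i+1}$ using $g_i^{-1}=g_i-(q-q^{-1})e_i$ and $g_ie_i=e_ig_i$; this is \eqref{formAK} for $(a,b)=(1,0)$. Similarly $g_iX_{i+1}=g_i\cdot g_i^{-1}X_ig_i^{-1}\cdot$ (rearranged) gives $g_iX_{i+1}=X_ig_i+(q-q^{-1})e_iX_{i+1}$, which is the $(a,b)=(0,1)$ case. From $g_ie_i=e_ig_i$ and $e_i^2=e_i$, together with the fact that $e_i$ commutes with $X_i$ and $X_{i+1}$ (these are built from $t_i,t_{i+1}$ and $g_i$, all of which commute with $e_i$ — this needs a small check, or can be quoted from \eqref{X-X} plus $t_je_i=e_it_{s_i(j)}$ reasoning), one can iterate. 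The cleanest route is probably to prove the two single-variable statements $g_iX_i^a = X_{i+1}^a g_i - (q-q^{-1})e_i\sum_{k=1}^{a}X_i^{a-k}X_{i+1}^{k}$ for $a\ge 0$ by induction on $a$ (multiply the identity for $a$ on the right by $X_i$, use the $a=1$ case, and use that $e_iX_i=X_ie_i$), then the mirror statement for $X_{i+1}$, and finally combine: for $a\ge b\ge 0$ write $g_iX_i^aX_{i+1}^b = (g_iX_i^{a-b})X_i^bX_{i+1}^b$ and use $g_iX_i^bX_{i+1}^b = X_i^bX_{i+1}^bg_i$ (which is \eqref{g-XX} raised to the $b$-th power, valid since $X_iX_{i+1}$ commutes with $g_i$). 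The case of negative exponents follows by multiplying through by suitable powers of $X_i^{-1}$, $X_{i+1}^{-1}$, which are central enough to pass the $e_i$-terms; the telescoping of the correction sums under shifting exponents is the routine-but-fiddly part. The $a\le b$ case is symmetric via the substitution $g_i \leftrightarrow$ conjugation swapping $X_i,X_{i+1}$.

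For \eqref{form1}, the key defining relation is $X_1g_1X_1g_1 = g_1X_1g_1X_1$ together with $X_1t_1 = t_1X_1$ and $t_1g_1 = g_1t_2$, $t_2g_1=g_1t_1$. I would first handle $b=0$: starting from $X_1g_1X_1g_1=g_1X_1g_1X_1$, I rewrite $X_1\cdot g_1X_1^a g_1$ by inducting on $a$, using the $a=1$ relation and $g_1^2 = 1+(q-q^{-1})e_1g_1$ to generate the $e_1$ terms; one checks the correction terms assemble into $(q-q^{-1})e_1(X_1g_1X_1^a - X_1^ag_1X_1)$. Then to insert $t_1^b$ I use that $t_1^b$ commutes with $X_1$ and that $g_1t_1^b g_1$ is central-ish with respect to $X_1$ because $e_1$ commutes with $t_1$ and absorbs the $t$-dependence (indeed $e_1 t_1^b = e_1 t_2^b$, etc.); the $t_1^b$ rides along through the computation essentially as a spectator once one is careful about which slot it sits in after conjugating by $g_1$. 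Finally \eqref{form3} follows from \eqref{form1} by substituting $g_1 = g_1^{-1} + (q-q^{-1})e_1$ on the left-hand occurrence of $g_1$ (the one being inverted) and simplifying using $e_1X_1^a = X_1^a e_1$, $e_1^2=e_1$, and the already-proved \eqref{form1}; the shape of the correction term changes from $X_1^at_1^bg_1X_1$ to $X_1^{a+1}t_1^bg_1$ precisely because of the extra $e_1$ absorbing an $X_1$ via $X_1e_1g_1 = e_1g_1X_1$-type moves.

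The main obstacle will be bookkeeping: keeping track of the exact index range of the telescoping sums over $k$ as exponents are shifted, and verifying in each step that $e_i$ (resp.\ $e_1$) genuinely commutes with everything it needs to — in particular with $X_i^{\pm1}$ and with the mixed monomials $X_i^cX_{i+1}^{d}$ — so that the $e_i$-terms can be freely collected. I do not anticipate any conceptual difficulty beyond \eqref{g-XX} and the base cases; everything else is a disciplined induction. One efficient presentation is to prove \eqref{formAK} in full, deduce from it the analogue with $t_i$ inserted (which is what actually feeds into \eqref{form1}), and then get \eqref{form1} and \eqref{form3} by the short manipulations above, so that only one genuine induction is carried out in detail.
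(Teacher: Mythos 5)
Your proposal is correct and follows essentially the same route as the paper: (\ref{formAK}) is obtained from the two single-variable identities $g_iX_i^a=X_{i+1}^ag_i-(q-q^{-1})e_i\sum_{k=1}^aX_i^{a-k}X_{i+1}^k$ and its mirror (proved by induction on $a$), combined with the fact that $X_iX_{i+1}$ commutes with $g_i$; and (\ref{form3}) is deduced by expanding $g_1^{-1}=g_1-(q-q^{-1})e_1$, applying (\ref{form1}), and recombining, exactly as you describe. The one divergence is (\ref{form1}) for $b=0$, where you propose an induction on $a$; the paper avoids any induction by observing that $g_1X_1g_1X_1^{a}=X_1^{a}g_1X_1g_1$ holds for all $a$ at once (it is just $X_2X_1^a=X_1^aX_2$) and then multiplying by $g_1^{-1}$ on both sides and substituting $g_1^{-1}=g_1-(q-q^{-1})e_1$ — a cleaner one-step argument you may prefer. (One incidental slip in your heuristic for (\ref{form3}): $X_1e_1g_1=e_1X_1g_1\neq e_1g_1X_1$ in general; the correct mechanism is simply that the term $-(q-q^{-1})e_1$ coming from $g_1^{-1}$ hits $X_1\cdot X_1^at_1^bg_1$ to produce $X_1^{a+1}t_1^bg_1$, while the $-X_1^at_1^bg_1X_1$ term from (\ref{form1}) reassembles with $g_1$ into $g_1^{-1}$.)
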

\begin{proof}
To prove (\ref{formAK}), we use that $g_i$ commutes with $X_iX_{i+1}$, see (\ref{g-XX}), together with the following relations, which are easily proved by induction on $a$,
\begin{equation}\label{noname1}
g_iX_i^a=X_{i+1}^ag_i-(q-q^{-1})e_i\sum_{k=1}^aX_i^{a-k}X_{i+1}^k,\ \qquad a\in\Z_{>0},
\end{equation}
\begin{equation}\label{noname2}
g_iX_{i+1}^a=X_i^ag_i+(q-q^{-1})e_i\sum_{k=0}^{a-1}X_i^{k}X_{i+1}^{a-k},\ \qquad a\in\Z_{>0}.
\end{equation}

To obtain (\ref{form1}) for $b=0$, 
we multiply the equality $g_1X_1g_1X_1^{a}=X_1^{a}g_1X_1g_1$ by $g_1^{-1}$ from both sides, and use that $g_1^{-1}=g_1-(q-q^{-1})e_1$,
together with the fact that $e_1$ commutes with $g_1$ and $X_1$.
To obtain (\ref{form1}) for any $b\in\Z$, we first move $t_1^b$ to the right through $g_1$, so it becomes $t_2^b$. Then we apply (\ref{form1}) for $b=0$ and we move back $t_2^b$ through $g_1$ in each term. 

Finally, to obtain (\ref{form3}), we first replace $g_1^{-1}$ by $g_1-(q-q^{-1})e_1$ in $X_1\,g_1^{-1}X_1^{a}t_1^bg_1$.
We then use  (\ref{form1}) and  we transform $g_1-(q-q^{-1})e_1$ appearing in front of $X_1^{a}t_1^bg_1X_1$ back into $g_1^{-1}$. 
\end{proof}

The algebra ${\rm Y}(d,m,n)$ admits an involutive ring homomorphism $\eta$ given, for any generator $x\in\{t_1,\ldots,t_n,$ $g_1,\ldots,g_{n-1},X_1^{\pm 1}\}$, by
\begin{equation}\label{eta}
\eta(x)=x^{-1},\ \ \  \eta(q)=q^{-1}\ \ \ \text{and \ \ \,(if $m<\infty$) }\ \ \ \eta(v_a)=v_a^{-1},\ a=1,\ldots,m.
\end{equation}
The existence of the homomorphism $\eta$ is immediate once we notice that $g_i^{-2}=1-(q-q^{-1})e_ig_i^{-1}$.

\section{Representation theory of the cyclotomic Yokonuma--Hecke algebra}\label{sec-rep}

In this section, we consider only the situation $m<\infty$, and we construct, with explicit formulas, a set of representations of the algebra $\cF_m{\rm Y}(d,m,n):=\cF_m\otimes_{\cR_m}{\rm Y}(d,m,n)$, labelled by combinatorial objects called $(d,m)$-partitions. Then we show that the representations constructed are pairwise non-isomorphic and irreducible. It will turn out in  Section \ref{sec-base} that these representations form a complete set of pairwise non-isomorphic irreducible representations for $\cF_m{\rm Y}(d,m,n)$.

\subsection{Multipartitions and multitableaux}
In this subsection we will introduce the combinatorial tools needed to describe the representations of  the cyclotomic Yokonuma--Hecke algebra.

\subsubsection{$(d,m)$-partitions}
Let $\lambda\vdash n$ be a partition of $n$, that is, $\lambda=(\lambda_1,\ldots,\lambda_k)$ is a family of  positive integers such that $\lambda_1\geq\lambda_2\geq\ldots\geq\lambda_k\geq 1$ and $|\lambda|:=\lambda_1+\ldots+\lambda_k=n$. We shall also say that $\lambda$ is a partition {\em of size} $n$. 

We identify partitions with their Young diagrams: the Young diagram of $\lambda$ is a left-justified array of $k$ rows such that
the $j$-th row contains  $\lambda_j$ {\em nodes } for all $j=1,\ldots,k$. We write $\theta=(x,y)$ for the node in row $x$ and column $y$. A node $\theta \in \lambda$ is called {\it removable} if the set of nodes obtained from $\lambda$ by removing $\theta$ is still a partition. A node $\theta' \notin \lambda$ is called {\it  addable} if the set of nodes obtained from $\lambda$ by adding $\theta'$ is still a partition.

Let $r\in\Z_{>0}\,$. An $r$-partition $\blambda$, or a Young $r$-diagram, of size $n$ is an $r$-tuple of partitions such that the total number of nodes in the associated Young diagrams is equal to $n$. That is, we have $\blambda=(\blambda^{(1)},\ldots,\blambda^{(r)})$ with $\blambda^{(1)},\ldots,\blambda^{(r)}$ usual partitions such that $|\blambda^{(1)}|+\ldots+|\blambda^{(r)}|=n$.

The combinatorial objects appearing in the representation theory of the cyclotomic Yokonuma--Hecke algebra  
$\cF_m{\rm Y}(d,m,n)$ 
will be $r$-partitions with $r=dm$. It will be convenient to consider them as $d$-tuples of $m$-tuples of partitions (\emph{i.e.} $d$-tuples of $m$-partitions). We will call such an object a \emph{$(d,m)$-partition}. The size of a $(d,m)$-partition is the size of the $dm$-partition associated. We will say that the $l$-th partition of the $k$-th $d$-tuple has \emph{position} $(k,l)$. This is an example of a $(2,2)$-partition of size $7$:
\begin{equation}\label{exdm}
\left(\left(\begin{array}{l}\Box\!\Box\\[-0.5em] \Box\end{array}
\, ,\, \begin{array}{l}\Box\\[-0.5em] \phantom{\Box}\end{array}\right)
\, ,\,\left(\varnothing\,,\,\begin{array}{l}\Box\\[-0.5em] \Box\\[-0.5em] \Box\end{array}\right)\right), 
\end{equation}
with the partition $\!\begin{array}{l}\Box\!\Box\\[-0.5em] \Box\end{array}\!$ in position $(1,1)$, the partition $\Box$ in position $(1,2)$, the empty partition in position $(2,1)$ and the partition $\!\begin{array}{l}\Box\\[-0.5em] \Box\\[-0.5em] \Box\end{array}\!$ in position $(2,2)$.

A triplet $\btheta=(\theta,k,l)$ consisting of a node $\theta$, an integer $k\in\{1,\ldots,d\}$ and an integer $l\in\{1,\ldots,m\}$ will be called a $(d,m)$-node. The integer $k$ is called the \emph{$d$-position} of $\btheta$ and the integer $l$ is called the \emph{$m$-position} of $\btheta$. We will also say that the $(d,m)$-node $\btheta$ has \emph{position $(k,l)$}. A $(d,m)$-partition $\blambda$ is thus naturally identified with a set of $(d,m)$-nodes such that the subset consisting of the $(d,m)$-nodes having position $(k,l)$ forms a usual partition, for any $k\in\{1,\ldots,d\}$ and $l\in\{1,\ldots,m\}$. For a $(d,m)$-node $\btheta$ belonging to this set, we will say that $\btheta$ is a $(d,m)$-node of $\blambda$, and write $\btheta\in\blambda$. For example, the following $(2,2)$-nodes are the $(2,2)$-nodes of the $(2,2)$-partition above:
\[\bigl((1,1),1,1\bigr),\ \bigl((1,2),1,1\bigr),\ \bigl((2,1),1,1\bigr),\ \ \,\bigl((1,1),1,2\bigr),\ \ \,\bigl((1,1),2,2\bigr),\ \bigl((2,1),2,2\bigr),\ \bigl((3,1),2,2\bigr)\,.\] 

Let ${\blambda}$ be a $(d,m)$-partition. A $(d,m)$-node $\btheta=(\theta,k,l)\in\blambda$ is called removable from $\blambda$ if the set of $(d,m)$-nodes obtained from $\blambda$ by removing $\btheta$ is still a $(d,m)$-partition, or equivalently if the node $\theta$ is removable from the partition of $\blambda$ with position $(k,l)$. 
We will write $\blambda \setminus \{\btheta\}$ for the $(d,m)$-partition obtained by removing a removable $(d,m)$-node $\btheta$ from $\blambda$.
Respectively, a $(d,m)$-node $\btheta=(\theta,k,l)\notin\blambda$ is called addable to $\blambda$ if the set of $(d,m)$-nodes obtained from $\blambda$ by adding $\btheta$ is still a $(d,m)$-partition, or equivalently if the node $\theta$ is addable to the partition of $\blambda$ with position $(k,l)$. 
We will write $\blambda \cup \{\btheta\}$ for the $(d,m)$-partition obtained by adding an addable $(d,m)$-node $\btheta$ to $\blambda$.
The set of $d$-nodes removable from $\blambda$ is denoted by ${\mathcal{E}}_-(\blambda)$ and the set of $d$-nodes addable to $\blambda$ is denoted by ${\mathcal{E}}_+(\blambda)$.

\vskip .1cm
For a $(d,m)$-node $\btheta=((x,y),k,l)$, we define $\posd(\btheta):=k$, $\posm(\btheta):=l$ and the \emph{(quantum) content} $\cc(\btheta)$ of $\btheta$ by $\cc(\btheta):=v_l\,q^{2(y-x)}$ (recall that $q,v_1,\ldots,v_m$ are the parameters appearing in the defining relations of the cyclotomic Yokonuma--Hecke algebra ${\rm Y}(d,m,n)$). Note that the quantum content of $\btheta$ contains simultaneously the information about $\posm(\btheta)$ and about the \emph{classical content} $\ccc(\btheta):=y-x$. We will refer to the array $\left(\begin{array}{l}\posd(\btheta)\\ \cc(\btheta)\end{array}\right)$ as the \emph{content array} of the $(d,m)$-node $\btheta$.

A $(d,m)$-partition $\blambda$ is fully characterised by its 
collection of content arrays 
\begin{equation}\label{content arrays}
\left\{\left(\begin{array}{l}\posd(\btheta)\\ \cc(\btheta)\end{array}\right)\,|\ \btheta\in\blambda\right\}.
\end{equation}
Indeed, if two $(d,m)$-nodes $\btheta, \btheta'\in\blambda$ satisfy $\posd(\btheta)=\posd(\btheta')$ and $\cc(\btheta)=\cc(\btheta')$, then they satisfy also $\posm(\btheta)=\posm(\btheta')$ and $\ccc(\btheta)=\ccc(\btheta')$. Therefore, they lie in the same diagonal of the same partition in $\blambda$, namely the diagonal of the partition with position $(\posd(\btheta),\posm(\btheta))$ and whose nodes have classical content equal to $\ccc(\btheta)$. Once we know the number of nodes on each diagonal of each partition of $\blambda$, the $(d,m)$-partition $\blambda$ is fixed.

\subsubsection{Standard $(d,m)$-tableaux}
Let $\blambda$ be a $(d,m)$-partition of size $n$. A {\em $(d,m)$-tableau of shape $\blambda$
} is a bijection between the set $\{1,\ldots,n\}$ and the set of $(d,m)$-nodes in $\blambda$. In other words, a $(d,m)$-tableau of shape $\blambda$ is obtained by placing the numbers $1,\ldots,n$ in the $(d,m)$-nodes of $\blambda$. 
The \emph{size} of a $(d,m)$-tableau is the size of its shape. 
 A $(d,m)$-tableau is {\em standard} if its entries increase along any row and down 
 any column of every diagram in $\blambda$.
For example, the following $(2,2)$-tableau of size $7$ is a standard $(2,2)$-tableau of shape the $(2,2)$-partition 
given in (\ref{exdm}): 
 \[\left(\left(\begin{array}{l}\fbox{2}\fbox{5}\\[-0.05em] \fbox{4}\end{array}
\, ,\, \begin{array}{l}\fbox{3}\\[-0.05em] \phantom{\fbox{2}}\end{array}\right)
\, ,\,\left(\varnothing\,,\,\begin{array}{l}\fbox{1}\\[-0.05em] \fbox{6}\\[-0.05em] \fbox{7}\end{array}\right)\right).\]

For a $(d,m)$-tableau $\cT$, we denote respectively by $\posd({\mathcal{T}}|i)$, $\posm({\mathcal{T}}|i)$ and $\cc({\mathcal{T}}|i)$  the $d$-position, the $m$-position and the quantum content  of the $(d,m)$-node with the number $i$ in it. For example, for the standard $(2,2)$-tableau above, we have:
\[\posd(\cT|2)=\posd(\cT|3)=\posd(\cT|4)=\posd(\cT|5)=1\,,\ \ \posd(\cT|1)=\posd(\cT|6)=\posd(\cT|7)=2\,,\]

\[\posm(\cT|2)=\posm(\cT|4)=\posm(\cT|5)=1\,,\ \ \posm(\cT|1)=\posm(\cT|3)=\posm(\cT|6)=\posm(\cT|7)=2\,,\]

\[\cc(\cT|2)=v_1,\ \ \cc(\cT|5)=v_1q^2,\ \ \cc(\cT|4)=\frac{v_1}{q^{2}},\ \ \cc(\cT|3)=\cc(\cT|1)=v_2,\ \ \cc(\cT|6)=\frac{v_2}{q^{2}},\ \ \cc(\cT|7)=\frac{v_2}{q^{4}}\,.\]

Recall that a $(d,m)$-partition is uniquely determined by its  
collection of content arrays (\ref{content arrays}). Thus, any standard  $(d,m)$-tableau $\cT$ of size $n$ is fully characterised by its 
sequence of content arrays
\begin{equation}\label{T-cont}
\left(\left(\begin{array}{l}\posd(\cT|1)\\ \cc(\cT|1)\end{array}\right),\ \left(\begin{array}{l}\posd(\cT|2)\\ \cc(\cT|2)\end{array}\right),\ \dots\ ,\ \left(\begin{array}{l}\posd(\cT|n)\\ \cc(\cT|n)\end{array}\right)\right)\ .
\end{equation}

It is worth noting that, for any standard $(d,m)$-tableau $\cT$ and any $i\in\{1,\ldots,n-1\}$, if we have $\posd(\cT|i)=\posd(\cT|i+1)$, then $\cc(\cT|i)\neq\cc(\cT|i+1)$ (in other words, the nodes containing $i$ and $i+1$ cannot lie in the same diagonal of the same diagram).

Finally, let $\cT$ be a standard $(d,m)$-tableau of shape $\blambda$ and of size $n$. As $\cT$ is standard, the $(d,m)$-node $\btheta$ that contains $n$ is removable from $\lambda$. We will write $\cT \setminus \scriptstyle{\left\{\fbox{\scriptsize{$n$}}\right\}}$ for the standard $(d,m)$-tableau of shape $\blambda\setminus \{\btheta\}$ obtained from $\cT$ by removing $\btheta$.

\subsection{Formulas for the representations}

For any $(d,m)$-tableau $\cT$ of size $n$ and any permutation $\sigma \in \mathfrak{S}_n$, we denote by $\cT^{\sigma}$ the $(d,m)$-tableau obtained from $\cT$ by applying the permutation $\sigma$ on the numbers contained in the $(d,n)$-nodes of $\cT$. We have, for $i=1,\ldots,n$,
\begin{equation}\label{pos-con}\posd(\cT^{\sigma}|i)=\posd\bigl(\cT|\sigma^{-1}(i)\bigr)\,,\ \ \ \posm(\cT^{\sigma}|i)=\posm\bigl(\cT|\sigma^{-1}(i)\bigr)\ \ \ \text{and}\ \ \ \cc(\cT^{\sigma}|i)=\cc\bigl(\cT|\sigma^{-1}(i)\bigr)\,.\end{equation}

\begin{rem}\label{non-standard}{\rm
Note that if the $(d,m)$-tableau $\cT$ is standard, the $(d,m)$-tableau $\cT^{\sigma}$ is not necessarily standard. If $\sigma=s_i=(i,i+1)$ and $\cT$ is a standard $(d,m)$-tableau, the $(d,m)$-tableau $\cT^{s_i}$ is not standard if and only if $\posd(\cT|i)=\posd\bigl(\cT|i+1\bigr)$ and $\cc(\cT|i)=q^{\pm2}\cc\bigl(\cT|i+1\bigr)$.
\hfill$\triangle$}\end{rem}

Let $\{\xi_1,\ldots,\xi_d\}$ be the set of all $d$-th roots of unity (ordered arbitrarily). Denote by $\mathcal{P}(d,m,n)$ the set of all $(d,m)$-partitions of size $n$, and let $\blambda \in \mathcal{P}(d,m,n)$. Let $V_{\blambda}$ be an  $\cF_m$-vector space with a basis $\{\bv_{_{\cT}}\}$ indexed by the standard $(d,m)$-tableaux of shape $\blambda$. We set $\bv_{_{\cT}}:=0$ for any non-standard $(d,m)$-tableau $\cT$ of shape $\blambda$. 

\begin{prop}\label{prop-rep} 
Let $\cT$ be a standard $(d,m)$-tableau of shape $\blambda  \in \mathcal{P}(d,m,n)$. For brevity, we set $\posd_i\!\!:=\posd(\cT|i)$ 
and $\cc_i:=\cc(\cT|i)$ for $i=1,\ldots,n$. 
The vector space $V_{\blambda}$ is a representation of $\cF_m{\rm Y}(d,m,n)$ with the action of the generators on the basis element $\bv_{_{\cT}}$ defined as follows:
\begin{equation}\label{rep-X}
X_1(\bv_{_{\cT}})=\cc_1\bv_{_{\cT}}\  ;
\end{equation}
for $j=1,\ldots,n$,
\begin{equation}\label{rep-t}
t_j(\bv_{_{\cT}})=\xi_{\pos^{\!(d)}_j}\bv_{_{\cT}}\  ;
\end{equation}
for $i=1,\ldots,n-1$, if $\posd_{i} \neq \posd_{i+1}$ then
\begin{equation}\label{rep-g1}
g_i(\bv_{_{\cT}})=\bv_{_{\cT^{s_i}}},
\end{equation}
and if $\posd_{i}=\posd_{i+1}$ then
\begin{equation}\label{rep-g2}
g_i(\bv_{_{\cT}})=\frac{\cc_{i+1}(q-q^{-1})}{\cc_{i+1}-\cc_i}\,\bv_{_{\cT}}+\frac{q\cc_{i+1}-q^{-1}\cc_i}{\cc_{i+1}-\cc_i}\,\bv_{_{\cT^{s_i}}},
\end{equation}
where $s_i$ is the transposition $(i,i+1)$.
\end{prop}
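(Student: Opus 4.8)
The plan is to verify directly that the assignments \eqref{rep-X}--\eqref{rep-g2} respect all the defining relations \eqref{def-aff1}--\eqref{def-aff2} of ${\rm Y}(d,m,n)$, so that $V_{\blambda}$ becomes a well-defined module over $\cF_m{\rm Y}(d,m,n)$. The relations involving only $t_1,\dots,t_n$ and $X_1$ are immediate: since $t_j$ acts by the scalar $\xi_{\posd_j}$ and $X_1$ by the scalar $\cc_1$ on each $\bv_{_{\cT}}$, commutativity $t_it_j=t_jt_i$, $X_1t_j=t_jX_1$, and the order relation $t_j^d=1$ hold because $\xi_{\posd_j}$ is a $d$-th root of unity; and since $\cc_1=v_{\posm_1}q^{2(y-x)}$ is a root of the polynomial $(X-v_1)\cdots(X-v_m)$ (because $\cc_1\in\{v_1,\dots,v_m\}$ — note that $\cc(\cT|1)$ must have classical content $0$, so $\cc_1=v_{\posm_1}$), the cyclotomic relation $(X_1-v_1)\cdots(X_1-v_m)=0$ is satisfied. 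The relations $t_jg_i=g_it_{s_i(j)}$ and $X_1g_i=g_it_{?}$-type ($X_1g_i=g_iX_1$ for $i\geq 2$) need a short check using \eqref{pos-con}: applying $g_i$ permutes the $d$-positions at slots $i,i+1$, which matches the index shift $s_i(j)$, and for $i\geq2$ the first slot (hence $\cc_1$) is untouched.

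Next I would treat the braid group relations among the $g_i$. The commuting relations $g_ig_j=g_jg_i$ for $|i-j|>1$ follow because $g_i$ and $g_j$ act on disjoint pairs of slots and the coefficients in \eqref{rep-g2} depend only on the data at those slots (this uses again that a standard tableau never has $\cc_i=\cc_{i+1}$ when $\posd_i=\posd_{i+1}$, so no denominator vanishes). The quadratic relation $g_i^2=1+(q-q^{-1})e_ig_i$ is a computation on the (at most) two-dimensional subspace spanned by $\bv_{_{\cT}}$ and $\bv_{_{\cT^{s_i}}}$: when $\posd_i\neq\posd_{i+1}$, $e_i$ acts as $0$ (since $e_i=\frac1d\sum_s t_i^st_{i+1}^{-s}$ acts by $\frac1d\sum_s\xi_{\posd_i}^s\xi_{\posd_{i+1}}^{-s}=0$) and $g_i^2(\bv_{_{\cT}})=\bv_{_{\cT}}$, consistent with $g_i^2=1$; when $\posd_i=\posd_{i+1}$, $e_i$ acts as the identity, and one checks that the $2\times2$ matrix of $g_i$ in \eqref{rep-g2} satisfies $M^2=I+(q-q^{-1})M$ — equivalently $(M-q)(M+q^{-1})=0$ — which is the standard Hoefsmit/Young seminormal computation. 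One must also handle the degenerate case where $\cT^{s_i}$ is non-standard (Remark \ref{non-standard}): then $\cc_{i+1}=q^{\pm2}\cc_i$, the coefficient $\frac{q\cc_{i+1}-q^{-1}\cc_i}{\cc_{i+1}-\cc_i}$ specializes to $q^{\pm1}$ times something, $\bv_{_{\cT^{s_i}}}=0$, and $g_i$ acts by the scalar $q$ or $-q^{-1}$, which still satisfies the quadratic relation. The braid relation $g_ig_{i+1}g_i=g_{i+1}g_ig_{i+1}$ is the genuinely laborious step: on the subspace spanned by the (at most six) tableaux $\cT^{\sigma}$, $\sigma\in\mathfrak{S}_{\{i,i+1,i+2\}}$, one compares the two sides. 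I would split by how many of $\posd_i,\posd_{i+1},\posd_{i+2}$ coincide: if all three $d$-positions are distinct, both sides just realize the permutation action and agree; if exactly two coincide, it reduces to a type-$A$ Hecke computation in a single ``block'' tensored with a permutation on the odd-one-out; if all three coincide, it is exactly the classical braid relation for Hoefsmit's matrices, already known from \cite{Ho,ArKo}.

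Finally I would check the relations in \eqref{def-aff2} involving $X_1$ that are not yet done, namely the affine braid relation $X_1g_1X_1g_1=g_1X_1g_1X_1$. Equivalently, by \eqref{rec-X}, this says $X_2$ commutes with $X_1$; more useful here is to compute the action of $X_1$ and $g_1$ on the subspace spanned by $\bv_{_{\cT}},\bv_{_{\cT^{s_1}}}$ and verify the operator identity directly. Since $X_1$ is diagonal with eigenvalues $\cc_1$ on $\bv_{_{\cT}}$ and $\cc_2$ on $\bv_{_{\cT^{s_1}}}$ (using \eqref{pos-con}), and $g_1$ is the $2\times2$ matrix above (or a scalar, or a swap), this is again a small matrix identity; in the case $\posd_1=\posd_2$ it is the standard verification that the Young seminormal form of the affine Hecke algebra of type $A$ is well-defined, and in the case $\posd_1\neq\posd_2$ both sides swap $\bv_{_{\cT}}\leftrightarrow\bv_{_{\cT^{s_1}}}$ while multiplying by $\cc_1\cc_2$, hence agree.

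The main obstacle is the braid relation $g_ig_{i+1}g_i=g_{i+1}g_ig_{i+1}$ in the ``all three $d$-positions equal'' case, together with carefully tracking the degenerate (non-standard $\cT^{s_i}$) configurations so that the formulas \eqref{rep-g2} are never evaluated at a vanishing denominator and the conventions $\bv_{_{\cT}}=0$ for non-standard $\cT$ are consistently used. I expect to reduce this to the classical Hoefsmit computation \cite{Ho} by observing that within a fixed $d$-position the formulas \eqref{rep-X}, \eqref{rep-g2} are literally the Ariki--Koike seminormal representation \cite{ArKo} (the parameter $v_l$ being absorbed into the content $\cc(\btheta)=v_lq^{2(y-x)}$), so the only new content is the interaction between different $d$-positions, which is governed purely by the symmetric group action \eqref{rep-g1} and is straightforward.
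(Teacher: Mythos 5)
Your proposal is correct and follows essentially the same route as the paper: the paper verifies the relations of (\ref{def-aff1}) by observing that the formulas for $t_j$ and $g_i$ coincide formally with those for the Yokonuma--Hecke algebra in \cite{ChPo} (so it cites that verification rather than redoing the Hoefsmit-type computations you outline), and it handles the affine braid relation by first proving Lemma \ref{lem-rep} (each $X_i$ acts diagonally by $\cc_i$), which makes $X_1X_2=X_2X_1$ immediate --- equivalent to your direct check on the $g_1$-stable two-dimensional subspaces. All the individual verifications you describe (the eigenvalue $0$ or $1$ of $e_i$, the quadratic relation including the degenerate scalars $q$ and $-q^{-1}$, and the reduction of the braid relation to the Ariki--Koike seminormal form within a fixed $d$-position) are accurate.
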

\begin{proof} We would first like to point out that the formulas for the action of the generators $t_1,\ldots,t_n$, $g_1,\ldots,g_{n-1}$ are formally exactly the same as the formulas for the representations of the Yokonuma--Hecke algebra in \cite[Proposition 5]{ChPo}. The difference only lies in the definition of the quantum content which involves here the parameters $v_1,\ldots,v_m$. 
Moreover, for any standard $(d,m)$-tableau $\cT$, the necessary and sufficient conditions for the $(d,m)$-tableau $\cT^{s_i}$ to be standard (see Remark \ref{non-standard}) are the same as for a $(d,1)$-tableau.
Therefore, the verification that the defining relations (\ref{def-aff1}) are satisfied by the images of the generators $t_1,\ldots,t_n,g_1,\ldots,g_{n-1}$ in the representations is exactly the same as the verification for the Yokonuma--Hecke algebras in \cite{ChPo}. It is a straightforward calculation, and we
do not repeat it here.

It remains to check the defining relations (\ref{def-aff1}) involving the generator $X_1$. The relations 
$$\begin{array}{rclcl}
X_1g_i & = & g_iX_1 && \mbox{for all $i=2,\ldots,n-1$,}\\[0.1em]
X_1t_j & = & t_jX_1 && \mbox{for all $j=1,\ldots,n$,}\\[0.1em]
(X_1-v_1)\cdots(X_1-v_m) & = & 0 && \mbox{if $m<\infty$}
\end{array}
$$
are obviously satisfied on $V_{\blambda}$. Finally, the relation $X_1g_1X_1g_1=g_1X_1g_1X_1$, equivalent to  $X_2X_1=X_1X_2$, follows from Lemma \ref{lem-rep} below.
\end{proof}

Recall that the elements $X_2,\ldots,X_n$ 
are defined inductively by $X_{i+1}=g_iX_ig_i$, ${i=1,\ldots,n-1}$.
\begin{lem}\label{lem-rep}
The action of the elements $X_1,\ldots,X_n$ on $V_{\blambda}$ is given, on a basis element $\bv_{_{\cT}}$ as above, by
\begin{equation}\label{rep-Xi}
X_i(\bv_{_{\cT}})=\cc_i\bv_{_{\cT}}\ \ \ \ \ \text{for $i=1,\ldots,n$.}
\end{equation}
\end{lem}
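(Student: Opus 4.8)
The plan is to prove the formula $X_i(\bv_{_{\cT}})=\cc_i\bv_{_{\cT}}$ by induction on $i$, using the recursive definition $X_{i+1}=g_iX_ig_i$ together with the explicit action of $g_i$ given in Proposition \ref{prop-rep}. The base case $i=1$ is exactly \eqref{rep-X}. For the inductive step, fix $i$, assume $X_i$ acts diagonally on every basis vector $\bv_{_{\cU}}$ with eigenvalue $\cc(\cU|i)$, and compute $X_{i+1}(\bv_{_{\cT}})=g_iX_ig_i(\bv_{_{\cT}})$. There are two cases to treat, according to whether $\posd_i\neq\posd_{i+1}$ or $\posd_i=\posd_{i+1}$.

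In the first case, $g_i(\bv_{_{\cT}})=\bv_{_{\cT^{s_i}}}$. If $\cT^{s_i}$ is standard then by the induction hypothesis $X_i$ acts on it by $\cc(\cT^{s_i}|i)=\cc(\cT|i+1)=\cc_{i+1}$, using \eqref{pos-con}; applying $g_i$ once more sends $\bv_{_{\cT^{s_i}}}$ back to $\bv_{_{\cT}}$ since $\posd(\cT^{s_i}|i)=\posd(\cT|i+1)\neq\posd(\cT|i)=\posd(\cT^{s_i}|i+1)$, so we get $\cc_{i+1}\bv_{_{\cT}}$, as desired. If $\cT^{s_i}$ is not standard, I should note that this cannot happen when $\posd_i\neq\posd_{i+1}$ (by Remark \ref{non-standard} non-standardness requires $\posd_i=\posd_{i+1}$), so there is nothing further to check. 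In the second case $\posd_i=\posd_{i+1}$, set $\alpha_i:=\frac{\cc_{i+1}(q-q^{-1})}{\cc_{i+1}-\cc_i}$ and $\beta_i:=\frac{q\cc_{i+1}-q^{-1}\cc_i}{\cc_{i+1}-\cc_i}$, so $g_i(\bv_{_{\cT}})=\alpha_i\bv_{_{\cT}}+\beta_i\bv_{_{\cT^{s_i}}}$. Applying $X_i$ (which acts by $\cc_i$ on $\bv_{_{\cT}}$ and, if $\cT^{s_i}$ is standard, by $\cc(\cT^{s_i}|i)=\cc_{i+1}$ on $\bv_{_{\cT^{s_i}}}$) and then $g_i$ again (noting $\posd(\cT^{s_i}|i)=\posd(\cT^{s_i}|i+1)$, so the same type of formula applies with the roles of $\cc_i,\cc_{i+1}$ swapped, and $(\cT^{s_i})^{s_i}=\cT$), one expands $g_i(\cc_i\alpha_i\bv_{_{\cT}}+\cc_{i+1}\beta_i\bv_{_{\cT^{s_i}}})$ and collects the coefficient of $\bv_{_{\cT}}$ and of $\bv_{_{\cT^{s_i}}}$. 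The claim is that the $\bv_{_{\cT^{s_i}}}$-coefficient vanishes and the $\bv_{_{\cT}}$-coefficient equals $\cc_{i+1}$; this is a direct rational-function identity in $\cc_i,\cc_{i+1},q$.

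The one subtlety to handle carefully is the case where $\posd_i=\posd_{i+1}$ but $\cT^{s_i}$ is \emph{not} standard, i.e. $\cc_{i+1}=q^{\pm2}\cc_i$. Then $\bv_{_{\cT^{s_i}}}=0$ by convention, so $g_i(\bv_{_{\cT}})=\alpha_i\bv_{_{\cT}}$ and $X_{i+1}(\bv_{_{\cT}})=\alpha_i^2\cc_i\bv_{_{\cT}}$; substituting $\cc_{i+1}=q^{\pm2}\cc_i$ into $\alpha_i$ gives $\alpha_i=q^{\pm1}$, hence $\alpha_i^2\cc_i=q^{\pm2}\cc_i=\cc_{i+1}$, consistent with the claim. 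I would also remark that the formula \eqref{g-X}, $g_jX_i=X_ig_j$ for $j\neq i-1,i$, guarantees that the $X_i$ are well-defined operators independent of how one unfolds the recursion, but strictly this is not needed for the induction itself. The main obstacle is purely bookkeeping: keeping track of which tableau each basis vector corresponds to after applying $g_i$, and verifying the rational identity in the $\posd_i=\posd_{i+1}$ case; there is no conceptual difficulty, and once Lemma \ref{lem-rep} is established the commutativity $X_2X_1=X_1X_2$ needed to finish the proof of Proposition \ref{prop-rep} is immediate since all $X_i$ act diagonally in the same basis.
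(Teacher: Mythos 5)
Your proof is correct and follows the same induction on $i$ via $X_{i+1}=g_iX_ig_i$ as the paper; the only difference is that where the paper treats the case $\posd_i=\posd_{i+1}$ by citing the well-known Ariki--Koike computation, you carry it out explicitly (including the degenerate subcase where $\cT^{s_i}$ is non-standard). One harmless slip: when $\cc_{i+1}=q^{-2}\cc_i$ the eigenvalue of $g_i$ is $-q^{-1}$ rather than $q^{-1}$, but since only its square enters your computation, the conclusion $\alpha_i^2\cc_i=\cc_{i+1}$ stands.
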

\begin{proof}
We prove Formula (\ref{rep-Xi}) by induction on $i$. For $i=1$, this is the defining action of $X_1$ given by (\ref{rep-X}). 
Let now $i\in\{1,\ldots,n-1\}$. We will show that
\begin{equation}\label{rep-Xi+1}
X_{i+1}(\bv_{_{\cT}})=  g_iX_{i}g_i(\bv_{_{\cT}}) =\cc_{i+1}\bv_{_{\cT}}. 
\end{equation}
Note that if $\posd_i=\posd_{i+1}$, we are in the situation of the Ariki--Koike algebra ${\rm H}(m,n)$ for the action of $g_i$ on $\bv_{_{\cT}}$
and $\bv_{_{\cT^{s_i}}}$; 
in this case, (\ref{rep-Xi+1})  is well-known and follows from a straightforward calculation (see, for example, \cite{ArKo}).
Thus, let $\posd_i\neq\posd_{i+1}$. Then the $(d,m)$-tableau $\cT^{s_i}$ is standard and we calculate:
\[X_{i+1}(\bv_{_{\cT}})=g_iX_ig_i(\bv_{_{\cT}})=g_iX_i(\bv_{_{\cT^{s_i}}})=
\cc(\cT^{s_i}| {i})\!\cdot g_i(\bv_{_{\cT^{s_i}}})=
\cc_{i+1}\!\cdot g_i(\bv_{_{\cT^{s_i}}})=\cc_{i+1}\!\cdot \bv_{_{\cT}}\ \,,\]
where we use the induction hypothesis in the third equality,  Equation (\ref{pos-con}) in the fourth, and Formula (\ref{rep-g1}) for the action of $g_i$ in the second and last equalities.
\end{proof}

\subsection{Distinctness and irreducibility} In the previous paragraph we constructed representations $V_{\blambda}$ of $\cF_m{\rm Y}(d,m,n)$, where $\blambda$ runs over the set $\mathcal{P}(d,m,n)$. In these subsection, we will show that these representations are distinct and irreducible.

\begin{prop}\label{prop-dis-irr}
The representations $V_{\blambda}$, where $\blambda$ runs over the set $\mathcal{P}(d,m,n)$, are irreducible pairwise non-isomorphic representations of $\cF_m{\rm Y}(d,m,n)$.
\end{prop}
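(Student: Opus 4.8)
The plan is to mimic the standard argument for irreducibility and distinctness of Specht-type modules for Hecke algebras, using the fact that the $X_i$ and $t_j$ act diagonally in the basis $\{\bv_{_{\cT}}\}$ with eigenvalues recorded by the content arrays of $\cT$. First I would observe that, by Lemma \ref{lem-rep} and Formula (\ref{rep-t}), the commutative subalgebra generated by $t_1,\dots,t_n,X_1,\dots,X_n$ acts on $\bv_{_{\cT}}$ by the scalars $\xi_{\posd(\cT|i)}$ (for $t_i$) and $\cc(\cT|i)$ (for $X_i$). Hence the ordered pair of sequences $\bigl((\posd(\cT|1),\dots,\posd(\cT|n)),(\cc(\cT|1),\dots,\cc(\cT|n))\bigr)$ — equivalently the sequence of content arrays (\ref{T-cont}) — is precisely the list of joint eigenvalues on the line $\C\bv_{_{\cT}}$. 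Two things need checking here: that these joint eigenvalues are \emph{distinct} for distinct standard $(d,m)$-tableaux $\cT$ of a fixed shape $\blambda$ (so each $\bv_{_{\cT}}$ is, up to scalar, the unique joint eigenvector with that eigenvalue system), and that the $t_j$ eigenvalue $\xi_{\posd(\cT|i)}$ determines $\posd(\cT|i)$ while the $X_i$ eigenvalue $\cc(\cT|i)=v_l q^{2(y-x)}$ determines $(l,y-x)$; both follow because $q,v_1,\dots,v_m$ are algebraically independent over $\C$ and the $\xi_k$ are distinct.

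For distinctness of the representations: if $V_{\blambda}\cong V_{\bmu}$, then the two modules have the same multiset of joint eigenvalue systems for the commutative family $(t_j,X_i)$. By the previous paragraph this multiset is exactly $\{(\text{content array sequence of }\cT): \cT \text{ a standard }(d,m)\text{-tableau of shape }\blambda\}$, and since a $(d,m)$-partition is reconstructed from its collection of content arrays (stated in the excerpt around (\ref{content arrays}), applied to the tableau that fills the diagram row by row), one recovers $\blambda$ from this data; hence $\blambda=\bmu$. For irreducibility: let $W\subseteq V_{\blambda}$ be a nonzero submodule. Since $W$ is stable under the commutative family $(t_j,X_i)$ whose joint eigenspaces are the lines $\C\bv_{_{\cT}}$ (using distinctness of eigenvalue systems), $W$ is spanned by a subset of the $\bv_{_{\cT}}$, so $\bv_{_{\cT}}\in W$ for some standard $\cT$. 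It then suffices to show that from any single $\bv_{_{\cT}}$ one can reach every $\bv_{_{\cT'}}$ by applying the generators $g_i$: this is the usual connectivity argument, using that any two standard $(d,m)$-tableaux of the same shape are linked by a chain of elementary transpositions $s_i$ that keep all intermediate tableaux standard, and that for such an $s_i$ the formulas (\ref{rep-g1}), (\ref{rep-g2}) express $\bv_{_{\cT^{s_i}}}$ as a linear combination of $g_i(\bv_{_{\cT}})$ and $\bv_{_{\cT}}$ with the coefficient of $\bv_{_{\cT^{s_i}}}$ nonzero — note that when $\posd_i\ne\posd_{i+1}$ the coefficient is $1$, and when $\posd_i=\posd_{i+1}$ and $\cT^{s_i}$ is standard the coefficient $\frac{q\cc_{i+1}-q^{-1}\cc_i}{\cc_{i+1}-\cc_i}$ is nonzero because $\cc_{i+1}\ne q^{\pm2}\cc_i$ by Remark \ref{non-standard}. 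Thus $W=V_{\blambda}$.

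The main obstacle I expect is the connectivity step: proving that the standardness-preserving elementary transpositions act transitively on the set of standard $(d,m)$-tableaux of a fixed shape $\blambda$. This is the $(d,m)$-analogue of the classical fact for ordinary Young tableaux; I would prove it by induction on $n$, removing the box containing $n$: given two standard $(d,m)$-tableaux $\cT,\cT'$ of shape $\blambda$, one first moves the entry $n$ in $\cT$ to the cell it occupies in $\cT'$ by a sequence of adjacent transpositions $s_{n-1},s_{n-2},\dots$ — each swap of the form "$n$ and $n-1$ in non-adjacent cells" keeps the tableau standard, and one has to check the $d$-position condition so that the move is actually realized by a $g_i$ acting invertibly (which again comes down to the standardness of the intermediate tableaux together with Remark \ref{non-standard}) — and then one applies the induction hypothesis to $\cT\setminus\{\fbox{\scriptsize{$n$}}\}$ and $\cT'\setminus\{\fbox{\scriptsize{$n$}}\}$, which now agree on where $n$ sits. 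A secondary technical point, which I would dispatch early, is the verification that within a fixed shape the eigenvalue systems of distinct standard tableaux are genuinely distinct; this reduces, diagonal by diagonal, to the observation that a standard filling of a single diagram is determined by, and determines, the left-to-right order in which its diagonals are filled, together with the $(d,m)$-position bookkeeping carried by $\posd$ and $\posm$.
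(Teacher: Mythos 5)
Your proof is correct, but it follows a genuinely different route from the paper's. You argue via the joint spectrum of the commutative family $t_1,\dots,t_n,X_1,\dots,X_n$: since distinct standard $(d,m)$-tableaux have distinct content-array sequences (\ref{T-cont}), the joint eigenspaces in $V_{\blambda}$ are the lines $\cF_m\bv_{_{\cT}}$, so any submodule is spanned by basis vectors, and irreducibility reduces to connectivity of the set of standard tableaux of shape $\blambda$ under admissible adjacent transpositions, with the nonvanishing of the coefficient of $\bv_{_{\cT^{s_i}}}$ in $g_i(\bv_{_{\cT}})$ guaranteed by Remark \ref{non-standard}. This is the seminormal-form (Jucys--Murphy/Okounkov--Vershik style) argument, and all its ingredients check out: the one-dimensionality of joint eigenspaces, the nonzero off-diagonal coefficient, and your inductive sketch of the connectivity lemma (move the entry $n$ to the removable node it occupies in the target tableau, each intermediate swap involving entries in non-adjacent cells, then induct on $n$) are all sound. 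The paper instead proceeds by induction on $n$: it restricts $V_{\blambda}$ to the subalgebra generated by $t_1,\dots,t_{n-1},g_1,\dots,g_{n-2},X_1$ (a quotient of $\cF_m{\rm Y}(d,m,n-1)$), uses the branching decomposition $V_{\blambda}\cong\bigoplus_{\btheta\in\mathcal{E}_-(\blambda)}V_{\blambda\setminus\{\btheta\}}$ into irreducible pairwise non-isomorphic summands, and derives a contradiction from a proper submodule containing one summand but not another by applying a single $g_{n-1}$ to a tableau with $n$ and $n-1$ in the two relevant removable nodes. The trade-off is that the paper's route needs only the much lighter combinatorial fact that such a tableau exists (the two removable nodes cannot lie in the same or adjacent diagonals), at the cost of carrying an induction on $n$ and the non-isomorphy of the restricted pieces; your route is a one-shot argument but must establish the full transitivity of standardness-preserving transpositions, which is the one piece you would need to write out carefully rather than merely sketch.
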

\begin{proof} The proof is very similar to the proof of the analogous results for the Ariki--Koike algebras in \cite{ArKo} and for the Yokonuma--Hecke algebras in \cite{ChPo}. We give it here for completeness.

The fact that the representations $V_{\blambda}$, where $\blambda\in\mathcal{P}(d,m,n)$, are pairwise non-isomorphic follows from the described action of the elements $t_1,\ldots,t_n,X_1,\ldots,X_n$ in Proposition \ref{prop-rep} and Lemma \ref{lem-rep}, together with the already noted fact that a standard 
$(d,m)$-tableau $\cT$ is characterised by its sequence of content arrays, see (\ref{T-cont}). 

Let $\blambda\in\mathcal{P}(d,m,n)$. The irreducibility of $V_{\blambda}$ is proved by induction on the size $n$ of $\blambda$. For $n=1$, the representations are one-dimensional so there is nothing to prove. Let $n>1$ and denote by $\mathrm{A}$ the subalgebra of $\cF_m{\rm Y}(d,m,n)$ generated by $t_1,\ldots,t_{n-1},g_1,\ldots,g_{n-2},X_1$. The algebra $\mathrm{A}$ is a quotient of the algebra $\cF_m{\rm Y}(d,m,n-1)$.

Let $\bmu$ be a $(d,m)$-partition of size $n-1$ of the form $\blambda \setminus\{\btheta\}$ where $\btheta$ is a removable $(d,m)$-node of $\blambda$. As $\cF_m$-vector space, $V_{\bmu}$ is isomorphic to the subspace of $V_{\blambda}$ 
spanned by the vectors of the form $\bv_{_{\cT}}$, with $\cT$ such that $\cT \setminus \scriptstyle{\left\{\fbox{\scriptsize{$n$}}\right\}}$ is of shape $\bmu$. Through this identification, we have the following isomorphism of $\cF_m$-vector spaces:
\begin{equation}\label{rest}
V_{\blambda}\cong \bigoplus_{\btheta\in\mathcal{E}_-(\blambda)} V_{\blambda \setminus\{\btheta\}},
\end{equation}
where $\mathcal{E}_-(\blambda)$ denotes the set of removable $(d,m)$-nodes of $\blambda$. A direct inspection at the formulas (\ref{rep-X})--(\ref{rep-g2}) for the action of the generators shows that the isomorphism in (\ref{rest}) is in fact an isomorphism of $\mathrm{A}$-modules. By induction hypothesis, the representations $V_{\blambda \setminus\{\btheta\}}$ appearing in (\ref{rest}) are irreducible representations of  $\cF_m{\rm Y}(d,m,n-1)$, and hence of $\mathrm{A}$. Moreover, we already showed that they are pairwise non-isomorphic.

Now assume that $M$ is a non-trivial proper $\cF_m{\rm Y}(d,m,n)$-submodule of $V_{\blambda}$. By the decomposition (\ref{rest}) of the $\mathrm{A}$-module $V_{\blambda}$ as a direct sum of irreducible $\mathrm{A}$-modules, there must be two $(d,m)$-nodes $\btheta,\btheta'$ removable from $\blambda$ such that $V_{\blambda \setminus\{\btheta\}}\subset M$ and $V_{\blambda \setminus\{\btheta'\}}\cap M=\{0\}$. Let $\cT$ be a standard $(d,m)$-tableau of shape $\blambda$ with number $n$ in $\btheta$ and number $n-1$ in $\btheta'$. As $\btheta$ and $\btheta'$ are both removable from $\blambda$, they cannot lie in the same diagonal nor in adjacent diagonals of the same diagram in $\blambda$, and thus such a standard $(d,m)$-tableau $\cT$ exists; moreover, $\cT^{s_{n-1}}$ is also standard. By construction, $\bv_{_{\cT}}\in V_{\blambda \setminus\{\btheta\}}$ and $\bv_{_{\cT^{s_{n-1}}}}\in V_{\blambda \setminus\{\btheta'\}}$. Now, if $\posd(\btheta) \neq \posd(\btheta')$, due to (\ref{rep-g1}), we have
\[\bv_{_{\cT^{s_{n-1}}}}=g_{n-1}(\bv_{_{\cT}}).\]
If $\posd(\btheta) = \posd(\btheta')$, then,
due to (\ref{rep-g2}), we have
\[\frac{q\cc_{n}-q^{-1}\cc_{n-1}}{\cc_{n}-\cc_{n-1}}\,\bv_{_{\cT^{s_{n-1}}}}=g_{n-1}(\bv_{_{\cT}})-\frac{\cc_{n}(q-q^{-1})}{\cc_{n}-\cc_{n-1}}\,\bv_{_{\cT}}\ \]
and $q\cc_{n}-q^{-1}\cc_{n-1}\neq 0$, following Remark \ref{non-standard}.
In every case, we have that $\bv_{_{\cT^{s_{n-1}}}}$  belongs to the $\cF_m{\rm Y}(d,m,n)$-submodule $M$.
This contradicts the fact that  $V_{\blambda \setminus\{\btheta'\}}\cap M=\{0\}$. Thus, a non-trivial proper $\cF_m{\rm Y}(d,m,n)$-submodule of $V_{\blambda}$ does not exist.
\end{proof}

\section{Linear bases of ${\rm Y}(d,m,n)$}\label{sec-base}

In this section, we return to the general situation of an arbitrary $m\in\Z_{>0}\cup\{\infty\}$. The goal of this section is to construct explicitly several $\cR_m$-bases of the algebra ${\rm Y}(d,m,n)$. We first exhibit some generating sets of elements of ${\rm Y}(d,m,n)$ and then we use the representation theory developed in the preceding section to deduce that, when $m<\infty$, these sets of elements are linearly independent. Due to the uniformity (with respect to $m$) of the form of the basis elements, the linear independence for $m=\infty$ is a consequence of the result for finite $m$.
Finally, we use the results obtained in this section to conclude that the representations constructed in the previous section form a complete set of pairwise non-isomorphic irreducible representations and to obtain a semisimplicity criterion for $\cF_m{\rm Y}(d,m,n)$ when $m<\infty$.

\subsection{Generating sets}
Recall that we identify the Yokonuma--Hecke algebra ${\rm Y}_m(d,n)$ of type $A$ with the subalgebra of ${\rm Y}(d,m,n)$ generated by $t_1,\ldots,t_n,g_1,\ldots,g_{n-1}$.
It is known that  ${\rm Y}_m(d,n)$ is a free $\cR_m$-module of rank $d^nn!$ \cite{ju2}. Let $\mathcal{B}_{d,n}$ be an $\cR_m$-basis of ${\rm Y}_m(d,n)$. 

\begin{exmp}\label{exmp of basis}{\rm
Juyumaya \cite{ju2} has shown that the following set is a basis for ${\rm Y}_m(d,n)$:
\begin{center}
$\mathcal{B}_{d,n}^{\rm can}:=\left\{\,t_1^{r_1}\ldots t_n^{r_n} g_w\,\left|\,w \in \mathfrak{S}_n, \, r_1,\ldots,r_n \in \Z/d\Z\right\}\right.,$
\end{center}
where $\mathfrak{S}_n$ is the symmetric group on $n$ letters and $g_w$ is defined in (\ref{def-gw}).
}
\hfill$\triangle$ 
\end{exmp}

We denote by $\mathcal{B}^{\text{AK}}_{d,m,n}$ the following set of elements of ${\rm Y}(d,m,n)$:
\begin{equation}\label{base-AK}
X_1^{a_1}\ldots X_n^{a_n}\cdot\omega\ \quad\text{where $(a_1,\ldots,a_n)\in E_m^n$ and $\omega\in\mathcal{B}_{d,n}$.}
\end{equation}
Let us now introduce the following notation for $k=1,\ldots,n$,
\[\begin{array}{l}

W^{(k)}_{J,a,b}:=g_{J}^{-1}\ldots g_2^{-1}g_1^{-1}X_1^a\,t_1^b\,g_1g_2\ldots g_{k-1}\,, \\[1em] 

W^{(k)-}_{J,a,b}:=g_{J}\ldots g_2g_1\,X_1^a\,t_1^b\,g_1^{-1}g_2^{-1}\ldots g_{k-1}^{-1}\,,\\[1em] 

\widetilde{W}^{(k)}_{J,a,b}:=g_{J}\ldots g_2g_1\,X_1^a\,t_1^b\,g_1g_2\ldots g_{k-1}\,, \\[1em] 

\widetilde{W}^{(k)-}_{J,a,b}:=g_{J}^{-1}\ldots g_2^{-1}g_1^{-1}\,X_1^a\,t_1^b\,g_1^{-1}g_2^{-1}\ldots g_{k-1}^{-1}\,,

\end{array}\]
where $J\in\{0,\ldots,k-1\}$ 
and $a,b\in\Z$. 
We use the following standard conventions: for $\epsilon=\pm1$, $g_{J}^{\epsilon}\ldots g_2^{\epsilon}g_1^{\epsilon}:=1$ and $g_{k-J}^{\epsilon}\ldots g_{k-2}^{\epsilon}g_{k-1}^{\epsilon}:=1$  if $J=0$. 
 Then we denote, respectively, by $\mathcal{B}^{\text{Ind}}_{d,m,n}$, $\mathcal{B}^{\text{Ind}-}_{d,m,n}$, $\widetilde{\mathcal{B}}^{\text{Ind}}_{d,m,n}$ and $\widetilde{\mathcal{B}}^{\text{Ind}-}_{d,m,n}$ the following sets of elements of ${\rm Y}(d,m,n)$:
\begin{equation}\label{base-ind1}
W^{(n)}_{J_n,a_n,b_n}\ldots W^{(2)}_{J_2,a_2,b_2}W^{(1)}_{J_1,a_1,b_1}\,, \ \quad\text{$J_k\in\{0,\ldots,k-1\}$, $a_k\in E_m$ and $b_k\in\{0,\ldots,d-1\}$.}
\end{equation}
\begin{equation}\label{base-ind2}
W^{(n)-}_{J_n,a_n,b_n}\ldots W^{(2)-}_{J_2,a_2,b_2}W^{(1)-}_{J_1,a_1,b_1}\,, \ \quad\text{$J_k\in\{0,\ldots,k-1\}$, $a_k\in E_m$ and $b_k\in\{0,\ldots,d-1\}$.}
\end{equation}
\begin{equation}\label{base-ind3}
\widetilde{W}^{(n)}_{J_n,a_n,b_n}\ldots \widetilde{W}^{(2)}_{J_2,a_2,b_2}\widetilde{W}^{(1)}_{J_1,a_1,b_1}\,, \ \quad\text{$J_k\in\{0,\ldots,k-1\}$, $a_k\in E_m$ and $b_k\in\{0,\ldots,d-1\}$.}
\end{equation}
\begin{equation}\label{base-ind4}
\widetilde{W}^{(n)-}_{J_n,a_n,b_n}\ldots \widetilde{W}^{(2)-}_{J_2,a_2,b_2}\widetilde{W}^{(1)-}_{J_1,a_1,b_1}\,, \ \quad\text{$J_k\in\{0,\ldots,k-1\}$, $a_k\in E_m$ and $b_k\in\{0,\ldots,d-1\}$.}
\end{equation}

The set $\mathcal{B}^{\text{AK}}_{d,m,n}$ is the analogue of the Ariki--Koike basis of the Ariki--Koike algebra $H(m,n)$ \cite{ArKo} for $m<\infty$,
and the standard Bernstein basis of the affine Hecke algebra of ${\rm GL}$ for $m=\infty$. The four other sets are inductive sets with respect to $n$, which are analogous to the inductive bases of $H(m,n)$ studied in \cite{La2,OgPo1}. The proof of the proposition below generalises the methods used in \cite{ArKo,OgPo1}.

\begin{prop}\label{prop-gen}
Each set  $\mathcal{B}^{\text{AK}}_{d,m,n}$, $\mathcal{B}^{\text{Ind}}_{d,m,n}$, $\mathcal{B}^{\text{Ind}-}_{d,m,n}$, $\widetilde{\mathcal{B}}^{\text{Ind}}_{d,m,n}$ and $\widetilde{\mathcal{B}}^{\text{Ind}-}_{d,m,n}$ generates (linearly over $\cR_m$) the algebra ${\rm Y}(d,m,n)$.
\end{prop}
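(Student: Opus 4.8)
The plan is to show that each of the five sets spans ${\rm Y}(d,m,n)$ over $\cR_m$ by starting from a set of elements that is manifestly a spanning set and then rewriting those elements as $\cR_m$-linear combinations of elements of the given set. Since $t_1,\ldots,t_n,g_1,\ldots,g_{n-1}$ together with $X_1^{\pm 1}$ generate the algebra, and since (by (\ref{inv-X1})) $X_1^{-1}\in\cR_m[X_1]$ when $m<\infty$, a first observation is that ${\rm Y}(d,m,n)$ is spanned by products of the form $X_1^{a}\,\omega_1 X_1^{a'}\omega_2\cdots$ with $\omega_i$ in the subalgebra ${\rm Y}_m(d,n)$; but a cleaner starting point, following the inductive philosophy, is to use the tower ${\rm Y}(d,m,n-1)\subset{\rm Y}(d,m,n)$: the algebra ${\rm Y}(d,m,n)$ is generated over ${\rm Y}(d,m,n-1)$ by $g_{n-1}$ and $X_n$ (equivalently, by $g_{n-1}$, since $X_n=g_{n-1}\cdots g_1 X_1 g_1\cdots g_{n-1}$). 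So I would prove all five statements simultaneously by induction on $n$, the case $n=1$ being trivial (each set reduces to $\{X_1^{a_1}t_1^{b_1}\mid a_1\in E_m,\ b_1\in\{0,\ldots,d-1\}\}$, which spans ${\rm Y}(d,m,1)=\cR_m[X_1]\otimes\cR_m[t_1]/(t_1^d-1)$).

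For the inductive step, I would argue that ${\rm Y}(d,m,n)$ is spanned by elements $h\cdot z$, where $h$ runs over a spanning set of ${\rm Y}(d,m,n-1)$ (given by the induction hypothesis, in the appropriate one of the five forms) and $z$ runs over a set of ``last-layer'' elements. For the Ariki--Koike set $\mathcal{B}^{\text{AK}}_{d,m,n}$, the last-layer elements are $X_n^{a_n}\,g_w$ type contributions; the key lemma is that any product $X_1^{a_1}\cdots X_n^{a_n}\,\omega$ with $\omega\in\mathcal{B}_{d,n}$ and arbitrary $a_i\in\Z$ can be reduced, using (\ref{inv-X1}) to bring exponents into $E_m$ via the relation $X_1^m=-\gamma_{m-1}^{(m)}X_1^{m-1}-\cdots$ (and conjugating by $g_w$ to move this relation onto each $X_i$), to an $\cR_m$-combination of elements of $\mathcal{B}^{\text{AK}}_{d,m,n}$; and conversely that such products already span, because moving all $t_i$'s and $X_i$'s to the left past the $g$'s and collecting — using the commutation relations (\ref{X-X}), (\ref{g-X}), the braid relations, $t_jg_i=g_it_{s_i(j)}$, and the formulas (\ref{formAK}) of Lemma \ref{lem-form} to push $g_i$ past $X_i^aX_{i+1}^b$ — produces only terms with the $X$'s and $t$'s to the left of a $g_w$, at the cost of lower-order correction terms (lower in the $g$-length, or with fewer $X$'s carrying large exponents) that are handled by a secondary induction. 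For the four inductive sets, the corresponding last-layer elements are the $W^{(n)}_{J_n,a_n,b_n}$ (resp. $W^{(n)-}$, $\widetilde W^{(n)}$, $\widetilde W^{(n)-}$): one shows that $\{W^{(n)}_{J,a,b}\cdot h\mid J\in\{0,\ldots,n-1\},\ a\in E_m,\ b\in\{0,\ldots,d-1\},\ h\in\widetilde{\text{(previous-layer span)}}\}$ — or rather the appropriately ordered product as in (\ref{base-ind1}) — spans. This is where the formulas (\ref{form1}) and (\ref{form3}) of Lemma \ref{lem-form} enter: they govern how $X_1$ (and hence, after conjugation, $X_n$) interacts with the ``hook'' $g_1^{-1}\cdots g_J^{-1}X_1^a t_1^b g_1\cdots g_{k-1}$, allowing one to rewrite an arbitrary product of such hooks into the prescribed ordered form.

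In more detail, for (\ref{base-ind1}) I would show: (i) the subalgebra ${\rm Y}(d,m,k)$ is spanned, over ${\rm Y}(d,m,k-1)$ acting on the right, by the elements $W^{(k)}_{J,a,b}$ with $J\in\{0,\ldots,k-1\}$, $a\in E_m$, $b\in\{0,\ldots,d-1\}$ — this is essentially the statement that $\{g_J^{-1}\cdots g_1^{-1}X_1^at_1^bg_1\cdots g_{k-1}\}$ together with ${\rm Y}(d,m,k-1)$ generates ${\rm Y}(d,m,k)$, which follows because $g_{k-1}=W^{(k)}_{k-1,0,0}$ and $X_k$ can be reached from the $W^{(k)}_{k-1,a,0}$; (ii) iterating (i) from $k=n$ down to $k=1$ gives that the ordered products in (\ref{base-ind1}) span. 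The reduction of exponents $a_k$ into $E_m$ uses (\ref{inv-X1}) after conjugating the relation into the right position via the $g_i$'s; the reduction of $b_k$ into $\{0,\ldots,d-1\}$ uses $t_j^d=1$. The cases (\ref{base-ind2})--(\ref{base-ind4}) are entirely parallel, replacing $g_i$ by $g_i^{-1}$ where needed and using (\ref{form3}) in place of (\ref{form1}), together with $g_i^{-1}=g_i-(q-q^{-1})e_i$ to convert between the two. I expect the main obstacle to be the bookkeeping in step (i)/(ii): one must carefully track a ``complexity'' statistic (e.g. the pair consisting of the length of the symmetric-group part and the number of $X$-exponents lying outside $\{0,\ldots,m-1\}$, ordered lexicographically) and verify that every application of (\ref{formAK}), (\ref{form1}), (\ref{form3}), or the braid/commutation relations either preserves the prescribed form or strictly decreases this statistic, so that the rewriting terminates — this is the generalisation of the method of \cite{ArKo,OgPo1} alluded to just before the proposition, and it is the only genuinely delicate point; the rest is a routine (if lengthy) verification.
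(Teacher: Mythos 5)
Your outline reproduces the paper's strategy — for $\mathcal{B}^{\text{AK}}_{d,m,n}$ one works with (\ref{g-X}), (\ref{X-X}) and (\ref{formAK}); for the inductive sets one inducts on $n$ with the hooks $W^{(k)}_{J,a,b}$ as the top layer and Lemma \ref{lem-form} as the engine; exponents are reduced via (\ref{X1m})--(\ref{inv-X1}) and $t_j^d=1$ — but it stops exactly where the proof has to start. The one statement carrying all the content is your step (i), that $\mathrm{span}_{\cR_m}\{W^{(k)}_{J,a,b}\}\cdot{\rm Y}(d,m,k-1)$ exhausts ${\rm Y}(d,m,k)$. The justification you give (the $W^{(k)}_{J,a,b}$ together with ${\rm Y}(d,m,k-1)$ \emph{generate} the algebra) only yields that arbitrary alternating products span, not that the one-sided products $W^{(k)}_{J,a,b}\cdot u$ do; and the identities invoked are off ($W^{(k)}_{k-1,0,0}=1$ while $g_{k-1}=W^{(k)}_{k-2,0,0}$, and $W^{(k)}_{k-1,a,0}$ is $g_{k-1}^{-1}\cdots g_1^{-1}X_1^ag_1\cdots g_{k-1}$, a power of $\widetilde{X}_k$ rather than of $X_k$). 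You then defer the closure of the argument to a rewriting procedure governed by a complexity statistic (length of the symmetric-group part plus the number of out-of-range exponents), but that statistic is never exhibited and does not visibly decrease under the moves you need: commuting $X_1$ past a hook via (\ref{form3}) produces, as in (\ref{mult-X}), terms such as $W^{(n)}_{0,1,b-s}\cdot g_{J-1}^{-1}\cdots g_1^{-1}t_1^sX_1^a$, in which a fresh power of $X_1$ and a fresh word in the $g_i^{-1}$ appear on the right and the $g$-length does not drop in any way your statistic records. This is a genuine gap, not bookkeeping.

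What actually closes the argument in the paper is a different mechanism: one proves the explicit multiplication formulas (\ref{mult-t}), (\ref{mult-g}), (\ref{mult-X})--(\ref{mult-X2}), which show that for every generator $x$ the product $x\cdot W^{(n)}_{J,a,b}$ is an $\cR_m$-combination of terms $W^{(n)}_{J',a',b'}\cdot u$ with $u\in{\rm Y}(d,m,n-1)$ — the ``progress'' is precisely that the residue $u$ lands in the smaller algebra, where the induction hypothesis on $n$ applies; once the span contains $1$ and is stable under left multiplication by each generator it is the whole algebra, and the only remaining induction is on the length of the word being multiplied on, so no termination statistic is needed. Until formulas of this type (or a genuinely well-founded ordering) are produced, your proof is a plan rather than a proof. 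Two smaller points: the sets $\mathcal{B}^{\text{Ind}-}_{d,m,n}$ and $\widetilde{\mathcal{B}}^{\text{Ind}-}_{d,m,n}$ need not be recomputed, since they follow from $\mathcal{B}^{\text{Ind}}_{d,m,n}$ and $\widetilde{\mathcal{B}}^{\text{Ind}}_{d,m,n}$ by applying the involution $\eta$ of (\ref{eta}); and for $\widetilde{\mathcal{B}}^{\text{Ind}}_{d,m,n}$ the unit element is not itself a set element when $n\geq 2$, so its membership in the span (via $1=g_{k-1}^{-1}\cdots g_1^{-1}\widetilde{W}^{(k)}_{0,0,0}$ and the stability just established) requires a separate check that ``entirely parallel'' glosses over.
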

\begin{proof} For $\mathcal{B}^{\text{AK}}_{d,m,n}$, let $A$ be the $\cR_m$-span of the set of elements (\ref{base-AK}) inside ${\rm Y}(d,m,n)$. As $\mathcal{B}_{d,n}$ is an $\cR_m$-basis of ${\rm Y}_m(d,n)$, the unit element can be expressed as linear combinations with coefficients in $\cR_m$ of elements $\omega\in\mathcal{B}_{d,n}$ (in Example \ref{exmp of basis}, we have in fact that the unit element is an element of the basis). 
Therefore, the unit element of ${\rm Y}(d,m,n)$ belongs to $A$.

It is enough to show that the product (for example, from the left)  of a generator of ${\rm Y}(d,m,n)$ with an element of $\mathcal{B}^{\text{AK}}_{d,m,n}$ still belongs to $A$,
because then $A$ becomes a subalgebra of ${\rm Y}(d,m,n)$ containing 
the unit element and 
all the generators of ${\rm Y}(d,m,n)$, that is, $A={\rm Y}(d,m,n)$. 

Let $(a_1,\ldots,a_n)\in E_m^n$ and $\omega\in\mathcal{B}_{d,n}$.
First, we have
$$X_1^{\pm 1} X_1^{a_1}\ldots X_n^{a_n}\cdot\omega = X_1^{a_1 \pm 1}\ldots X_n^{a_n}\cdot \omega \in A,$$
either automatically, or with the use of 
(\ref{X1m})--(\ref{inv-X1}) if $m < \infty$. 
Now, by (\ref{X-X}), we have that
$$t_j  X_1^{a_1}\ldots X_n^{a_n}\cdot\omega = X_1^{a_1}\ldots X_n^{a_n}\cdot t_j\omega \in A  \quad \text{ for all } j=1,\ldots,n,$$
since $t_j\omega$ can be written as an  $\cR_m$-linear combination of elements of   $\mathcal{B}_{d,n}$. Finally, by (\ref{g-X}), we have that
$$g_i  X_1^{a_1}\ldots X_n^{a_n}\cdot\omega = X_1^{a_1}\ldots g_i X_i^{a_i} X_{i+1}^{a_{i+1}}   \ldots X_n^{a_n}\cdot\omega \quad \text{ for all } i=1,\ldots,n-1.$$
With the use of (\ref{formAK}) and what we have seen above, we deduce that
$$X_1^{a_1}\ldots g_i X_i^{a_i} X_{i+1}^{a_{i+1}}   \ldots X_n^{a_n}\cdot\omega = X_1^{a_1}\ldots X_i^{a_{i+1}} X_{i+1}^{a_{i}}g_i    \ldots X_n^{a_n}\cdot\omega + \text{an element of } A.$$
Applying again (\ref{g-X}) yields 
$$X_1^{a_1}\ldots X_i^{a_{i+1}} X_{i+1}^{a_{i}}g_i    \ldots X_n^{a_n}\cdot\omega = X_1^{a_1}\ldots X_i^{a_{i+1}} X_{i+1}^{a_{i}}  \ldots X_n^{a_n}\cdot g_i\omega \in A \quad \text{ for all } i=1,\ldots,n-1, $$
since $g_i \omega$ can be written as an  $\cR_m$-linear combination of elements of   $\mathcal{B}_{d,n}$.
Thus, $$g_i  X_1^{a_1}\ldots X_n^{a_n}\cdot\omega \in A \quad \text{ for all } i=1,\ldots,n-1.$$

\vskip .1cm
We proceed similarly for $\mathcal{B}^{\text{Ind}}_{d,m,n}$. The unit element 
of ${\rm Y}(d,m,n)$ belongs to $\mathcal{B}^{\text{Ind}}_{d,m,n}$. So we just have to check that the product (for example, from the left) of a generator of ${\rm Y}(d,m,n)$ with any element of $\mathcal{B}^{\text{Ind}}_{d,m,n}$ is in the $\cR_m$-span of $\mathcal{B}^{\text{Ind}}_{d,m,n}$. We prove this statement by induction on $n$. 

First note that $W^{(n)}_{J,a,b+d}=W^{(n)}_{J,a,b}$ for any $b\in\Z$ and that, if $m<\infty$, the element $W^{(n)}_{J,a,b}$ with $a\in\Z$ can be rewritten as an $\cR_m$-linear combination of elements $W^{(n)}_{J,a',b}$ with $a'\in E_m$. The latter follows immediately from (\ref{X1m})--(\ref{inv-X1}).
This remark yields in particular the statement for $n=1$.
For $n>1$, we consider the products $x\cdot W^{(n)}_{J,a,b}$, where $x\in\{t_1,\ldots,t_n,g_1,\ldots,g_{n-1},X_1^{\pm1}\}\,$, $J\in\{0,\ldots,n-1\}$ and $a,b\in\Z$. Due to the above remark, it is enough to show that these products can be written as $\cR_m$-linear combinations of terms of the form $W^{(n)}_{J',a',b'}\cdot u$, where $u\in{\rm Y}(d,m,n-1)$, $J'\in\{0,\ldots,n-1\}$ and $a',b'\in\Z$ . Following the induction hypothesis, this will yield our statement. 

The rewriting of $x\cdot W^{(n)}_{J,a,b}$ is a straightforward case-by-case analysis which yields the following explicit formulas (for $l=1,\ldots,n$ and $i=1,\ldots,n-1$):
\begin{equation}\label{mult-t}t_l\cdot W^{(n)}_{J,a,b}=\left\{\begin{array}{ll} 
W^{(n)}_{J,a,b}\cdot t_l & \text{if $l\leq J$\,,}\\  & \\
W^{(n)}_{J,a,b+1} & \text{if $l=J+1$\,,}\\ & \\ 
W^{(n)}_{J,a,b}\cdot t_{l-1} & \text{if $l>J+1$\,,}
\end{array}\right.
\end{equation}
\begin{equation}\label{mult-g}g_i\cdot W^{(n)}_{J,a,b}=\left\{\begin{array}{ll} 
W^{(n)}_{J,a,b}\cdot g_i & \text{if $i<J$\,,}\\ & \\
W^{(n)}_{J-1,a,b} & \text{if $i=J$\,,}\\  & \\
W^{(n)}_{J+1,a,b}+(q-q^{-1})\displaystyle\frac{1}{d}\sum\limits_{s=0}^{d-1}W^{(n)}_{J,a,b-s}\cdot t_{J+1}^{s} & \text{if $i=J+1$\,,}\\ & \\ 
W^{(n)}_{J,a,b}\cdot g_{i-1} & \text{if $i>J+1$\,,}
\end{array}\right.
\end{equation}
\begin{equation}\label{mult-X}X_1\cdot W^{(n)}_{J,a,b}=\left\{\begin{array}{ll} 
W^{(n)}_{0,a+1,b} & \ \ \ \text{if $J=0$\,,}\\ & \\
W^{(n)}_{J,a,b}\cdot X_1+(q-q^{-1})\displaystyle\frac{1}{d}\sum\limits_{s=0}^{d-1}\Bigl(W^{(n)}_{0,1,b-s}\cdot g_{J-1}^{-1}\ldots g_1^{-1}t_1^sX_1^a &\\
\hspace{5.2cm}-W^{(n)}_{0,a+1,b-s}\cdot g_{J-1}^{-1}\ldots g_1^{-1}t_1^s\Bigr) & \ \ \ \text{if $J>0$\,,}
\end{array}\right.
\end{equation}
where we use Formula (\ref{form3}) in Lemma \ref{lem-form}, together with the equality:
\begin{equation}\label{maybe}
g_J^{-1} \ldots g_2^{-1} \cdot W_{0,a,b}^{(n)} = W_{0,a,b}^{(n)} \cdot g_{J-1}^{-1} \ldots g_1^{-1},
\end{equation}
which follows directly from 
(\ref{mult-g}).
For finite $m$ the proof for $\mathcal{B}^{\text{Ind}}_{d,m,n}$ is finished, while for $m=\infty$ it remains to multiply $W^{(n)}_{J,a,b}$ by $X_1^{-1}$. 
By multiplying both sides of (\ref{mult-X}) by $X_1^{-1}$, 
we obtain:
\begin{equation}\label{mult-X2}X_1^{-1}\!\!\cdot W^{(n)}_{J,a,b}=\left\{\begin{array}{ll} 
W^{(n)}_{0,a-1,b} & \ \ \ \text{if $J=0$\,,}\\ &\\
W^{(n)}_{J,a,b}\cdot X_1^{-1}-(q-q^{-1})\displaystyle\frac{1}{d}\sum\limits_{s=0}^{d-1}\Bigl(W^{(n)}_{0,0,b-s}\cdot g_{J-1}^{-1}\ldots g_1^{-1}t_1^sX_1^{a-1} &\\
\hspace{5.1cm}-W^{(n)}_{0,a,b-s}\cdot g_{J-1}^{-1}\ldots g_1^{-1}t_1^sX_1^{-1}\Bigr) & \ \ \ \text{if $J>0$\,.}
\end{array}\right.
\end{equation}

We can perform similar straightforward calculations for $\widetilde{\mathcal{B}}^{\text{Ind}}_{d,m,n}$ to prove that the $\cR_m$-span of the elements in $\widetilde{\mathcal{B}}^{\text{Ind}}_{d,m,n}$ is stable by multiplication (from the left) by the generators. We only indicate that we have to use Formula (\ref{form1}) in Lemma \ref{lem-form}, instead of (\ref{form3}), for the multiplication by $X_1$. We skip the details.
Then it remains to prove that the unit element belongs to the $\cR_m$-span of the elements in $\widetilde{\mathcal{B}}^{\text{Ind}}_{d,m,n}$. For $n=1$, 
$1=\widetilde{W}^{(1)}_{0,0,0}$.
 Then we notice that, for $k=2,\dots,n$, we have $1=g_{k-1}^{-1}g_{k-2}^{-1}\dots g_1^{-1} \widetilde{W}^{(k)}_{0,0,0}$. So by induction on $n$ and the stability property, this yields the desired result. 

Finally, the generating property for $\mathcal{B}^{\text{Ind}-}_{d,m,n}$ and $\widetilde{\mathcal{B}}^{\text{Ind}-}_{d,m,n}$ follows from the results for $\mathcal{B}^{\text{Ind}}_{d,m,n}$ and $\widetilde{\mathcal{B}}^{\text{Ind}}_{d,m,n}$, applying the ring homomorphism $\eta$ of  ${\rm Y}(d,m,n)$ defined in (\ref{eta}).
\end{proof}

\begin{rem}\label{rem-bases} {\rm
Let $E'_m$ be a subset of $\Z$ such that $\{X_1^a\,|\,a\in E'_m\}$ is an $\cR_m$-basis of $\cR_m[X_1^{\pm1}]$. Denote respectively by 
$\mathcal{B}^{\text{AK}}_{d,m,n}(E'_m)$,  
$\mathcal{B}^{\text{Ind}}_{d,m,n}(E'_m)$, $\mathcal{B}^{\text{Ind}-}_{d,m,n}(E'_m)$, $\widetilde{\mathcal{B}}^{\text{Ind}}_{d,m,n}(E'_m)$ and $\widetilde{\mathcal{B}}^{\text{Ind}-}_{d,m,n}(E'_m)$ the sets of elements as in (\ref{base-AK})--(\ref{base-ind4}), 
but with the conditions $a_k\in E_m$ replaced by $a_k\in E'_m$. Then the proof of Proposition \ref{prop-gen} extends immediately to show that these sets of elements are also generating sets (over $\cR_m$) of  the algebra ${\rm Y}(d,m,n)$. If $m=\infty$, the only choice is $E'_{\infty}=E_{\infty}=\Z$. 
If $m<\infty$, the two relations obtained by applying the ring homomorphism $\eta$
to (\ref{X1m})--(\ref{inv-X1}) imply that we can take $E'_m=-E_m$. Moreover, if $m$ is odd, we can take $E'_m=\{0,\pm1,\pm2,\dots,\pm \frac{m-1}{2}\}$. Indeed, by multiplying (\ref{X1m}) by $X_1^{-(m-1)/2}$, we obtain $X_1^{(m+1)/2}$ as a linear combination of elements in $\{X_1^a\,|\,a\in E'_m\}$.
By further applying $\eta$, we obtain $X_1^{-(m+1)/2}$ as a linear combination of elements in $\{X_1^a\,|\,a\in E'_m\}$.
The set $E_m'=\{0,\pm1,\pm2,\dots,\pm \frac{m-1}{2}\}$ for $m$ odd will be used in the proof of Theorem \ref{theo-bases}.
\hfill$\triangle$
}\end{rem}

\subsection{Bases}

We will use the representation theory developed in the previous section to prove that each generating set in Proposition \ref{prop-gen} is actually a basis of ${\rm Y}(d,m,n)$. Recall that,  in Section \ref{sec-rep}, we constructed a set $\{V_{\blambda}\}_{\blambda  \in \mathcal{P}(d,m,n)}$ of pairwise non-isomorphic irreducible representations of $\cF_m{\rm Y}(d,m,n)$ for $m<\infty$. The following standard equality holds:
\begin{equation}\label{dim-rep}\sum_{\blambda  \in \mathcal{P}(d,m,n)}\bigl(\dim(V_{\blambda})\bigr)^2=(dm)^nn!\,\ \ \ \ \ \text{for $m<\infty$\,.}\end{equation}
\begin{thm}\label{theo-bases}
Each set $\mathcal{B}^{\text{AK}}_{d,m,n}$, $\mathcal{B}^{\text{Ind}}_{d,m,n}$, $\mathcal{B}^{\text{Ind}-}_{d,m,n}$, $\widetilde{\mathcal{B}}^{\text{Ind}}_{d,m,n}$ and $\widetilde{\mathcal{B}}^{\text{Ind}-}_{d,m,n}$ is an $\cR_m$-basis of ${\rm Y}(d,m,n)$. In particular, ${\rm Y}(d,m,n)$ is a free $\cR_m$-module and, if $m<\infty$, its rank is equal to $(dm)^nn!$\,.
\end{thm}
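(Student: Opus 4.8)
The strategy is a counting argument combining Proposition \ref{prop-gen} with the representation theory from Section \ref{sec-rep}. Each of the five sets has exactly $(dm)^nn!$ elements (for finite $m$): indeed $\mathcal{B}^{\text{AK}}_{d,m,n}$ has $|E_m|^n=m^n$ choices for $(a_1,\ldots,a_n)$ times $|\mathcal{B}_{d,n}|=d^nn!$, and each inductive set is parametrised by tuples with $J_k\in\{0,\ldots,k-1\}$, $a_k\in E_m$ and $b_k\in\{0,\ldots,d-1\}$, giving $\prod_{k=1}^n k\cdot m\cdot d=(dm)^nn!$. By Proposition \ref{prop-gen}, each set generates ${\rm Y}(d,m,n)$ linearly over $\cR_m$, hence ${\rm Y}(d,m,n)$ is spanned by $(dm)^nn!$ elements and a fortiori $\cF_m{\rm Y}(d,m,n)$ is spanned (over $\cF_m$) by the same number. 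On the other hand, by Proposition \ref{prop-dis-irr} the algebra $\cF_m{\rm Y}(d,m,n)$ has a family $\{V_{\blambda}\}_{\blambda\in\mathcal{P}(d,m,n)}$ of pairwise non-isomorphic irreducible representations, so
\[
\dim_{\cF_m}\bigl(\cF_m{\rm Y}(d,m,n)\bigr)\geq\sum_{\blambda\in\mathcal{P}(d,m,n)}\bigl(\dim V_{\blambda}\bigr)^2=(dm)^nn!\,,
\]
using the standard equality (\ref{dim-rep}). Comparing, the spanning set of size $(dm)^nn!$ must be an $\cF_m$-basis of $\cF_m{\rm Y}(d,m,n)$; in particular its elements are $\cF_m$-linearly independent, hence $\cR_m$-linearly independent, so each set is an $\cR_m$-basis of ${\rm Y}(d,m,n)$, which is therefore free of rank $(dm)^nn!$ over $\cR_m$ when $m<\infty$.

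\textbf{The combinatorial identity (\ref{dim-rep}).} One genuine step is to justify $\sum_{\blambda}(\dim V_{\blambda})^2=(dm)^nn!$. Since $\dim V_{\blambda}$ is the number of standard $(d,m)$-tableaux of shape $\blambda$, and $(d,m)$-partitions of size $n$ are exactly $dm$-partitions of size $n$, the left-hand side is $\sum_{\bmu\vdash_{dm} n}(\#\text{SYT}(\bmu))^2$. By the Robinson--Schensted correspondence for $r$-partitions (equivalently, the fact that $\bigoplus_{\bmu}\text{SYT}(\bmu)\otimes\text{SYT}(\bmu)$ is in bijection with pairs consisting of a function $\{1,\ldots,n\}\to\{1,\ldots,dm\}$ together with a permutation of each fibre), this sum equals $(dm)^nn!$. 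Alternatively one invokes that $\cF_m{\rm Y}(d,m,n)$ contains, via the diagram (\ref{diagram}) and the decomposition of Yokonuma--Hecke algebras, a semisimple specialisation isomorphic to a product of Ariki--Koike-type matrix algebras whose total dimension over $\cF_m$ is visibly $(dm)^nn!$; this is the identity used already for $m=1$ in \cite{ChPo}.

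\textbf{The affine case $m=\infty$.} Finally one transfers linear independence from finite $m$ to $m=\infty$. Here one uses the uniformity of the basis elements: the elements of $\mathcal{B}^{\text{AK}}_{d,\infty,n}$ (and of the four inductive sets) are written as the same words in the generators $t_j,g_i,X_k$ as their finite-$m$ counterparts, with exponents ranging over $\Z$ instead of $E_m$. Given a finite $\cR_\infty$-linear relation among elements of $\mathcal{B}^{\text{AK}}_{d,\infty,n}$, only finitely many exponents $a_k$ occur; choose $m$ larger than all of them and also larger than the degrees of the (finitely many) coefficients in $q$, and consider the specialisation $\cR_\infty\to\cR_m$ together with the natural algebra homomorphism ${\rm Y}(d,\infty,n)\to{\rm Y}(d,m,n)$ sending generators to generators (this is well defined since the only extra relation in ${\rm Y}(d,m,n)$ involves $X_1^m$, which kills no term of our relation once $m$ is large). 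The relation maps to a nontrivial relation among distinct elements of $\mathcal{B}^{\text{AK}}_{d,m,n}$, contradicting the finite-$m$ result. The same argument applies verbatim to the inductive sets. I expect the main obstacle to be a careful bookkeeping in this last step — ensuring that distinct parameter-tuples in the affine setting remain distinct (and nonzero) after reduction mod $X_1^m$, which is exactly why Remark \ref{rem-bases}, with its flexibility in the choice of the exponent set $E'_m$ (in particular $E'_m=\{0,\pm1,\ldots,\pm\frac{m-1}{2}\}$ for odd $m$), is invoked: it lets us pick a window of exponents symmetric about $0$ that contains all exponents appearing in the given relation.
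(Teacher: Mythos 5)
Your proposal is correct and follows essentially the same route as the paper: the counting argument for finite $m$ via Proposition \ref{prop-gen} and the identity (\ref{dim-rep}), followed by reduction of the affine case to a large finite $m_0$ via the quotient map and the flexible exponent windows $E'_{m_0}$ of Remark \ref{rem-bases} (the paper handles negative exponents in $\mathcal{B}^{\text{AK}}_{d,\infty,n}$ by first multiplying through by positive powers of the $X_i$, while your symmetric-window fix works equally well). The only stray remark is that $m_0$ need not dominate any $q$-degrees, since the coefficients already lie in $\cR_\infty\subset\cR_{m_0}$.
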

\begin{proof}
First assume that $m<\infty$. Due to (\ref{dim-rep}), we have that $\dim(\cF_m{\rm Y}(d,m,n))\geq (dm)^nn!$. By Proposition \ref{prop-gen}, each set $\mathcal{B}^{\text{AK}}_{d,m,n}$, $\mathcal{B}^{\text{Ind}}_{d,m,n}$, $\mathcal{B}^{\text{Ind}-}_{d,m,n}$, $\widetilde{\mathcal{B}}^{\text{Ind}}_{d,m,n}$ and $\widetilde{\mathcal{B}}^{\text{Ind}-}_{d,m,n}$ is a generating set of $\cF_m{\rm Y}(d,m,n)$ over $\cF_m$ and contains exactly $(dm)^nn!$ elements. Thus, the elements of each set are linearly independent over $\cF_m$, and in turn over $\cR_m$.

Now let $m=\infty$. We know by Proposition \ref{prop-gen} that each set $\mathcal{B}^{\text{AK}}_{d,\infty,n}$, $\mathcal{B}^{\text{Ind}}_{d,\infty,n}$, $\mathcal{B}^{\text{Ind}-}_{d,\infty,n}$, $\widetilde{\mathcal{B}}^{\text{Ind}}_{d,\infty,n}$ and $\widetilde{\mathcal{B}}^{\text{Ind}-}_{d,\infty,n}$ is a generating set of ${\rm Y}(d,\infty,n)$, so it remains only to prove the linear independence over $\cR_{\infty}$.
Note that $\cR_{\infty}\subset \cR_{m}$ and that, for any $m<\infty$, ${\rm Y}(d,m,n)$ is the quotient of $\cR_m\otimes_{\cR_{\infty}}{\rm Y}(d,\infty,n)$ over the last relation in (\ref{def-aff2}). Denote by $\pi^{(m)}$ the associated surjective homomorphism.

First assume that a (non-trivial) linear combination $\mathbf{L}$ over $\cR_{\infty}$ of elements of $\mathcal{B}^{\text{AK}}_{d,\infty,n}$  is equal to $0$. By multiplying $\mathbf{L}$ from the left by large enough positive powers of elements $X_i$, we can assume that only non-negative powers of the generators $X_i$ appear in it. Then taking $m_0 \in \mathbb{Z}_{>0}$ larger than any powers of the $X_i$ appearing in $\mathbf{L}$ and applying $\pi^{(m_0)}$ to $\mathbf{L}$, it implies a dependence relation $\pi^{(m_0)}(\mathbf{L})=0$ over $\cR_{\infty}\subset \cR_{m_0}$ between elements of the basis $\mathcal{B}^{\text{AK}}_{d,m_0,n}$ of ${\rm Y}(d,m_0,n)$. This contradicts the first part of the proof.

Now, for any of the sets $\mathcal{B}^{\text{Ind}}_{d,\infty,n}$, $\mathcal{B}^{\text{Ind}-}_{d,\infty,n}$, $\widetilde{\mathcal{B}}^{\text{Ind}}_{d,\infty,n}$ and $\widetilde{\mathcal{B}}^{\text{Ind}-}_{d,\infty,n}$, assume that a (non-trivial) linear combination $\mathbf{L}$ of its elements over $\cR_{\infty}$ is equal to $0$. Let $m_+\in\Z$ be the largest absolute value of the powers of $X_1$ appearing in $\mathbf{L}$ and take 
$m_0:=2m_+ + 1$ (or any larger odd integer). Then applying $\pi^{(m_0)}$ to $\mathbf{L}$, it implies a dependence relation $\pi^{(m_0)}(\mathbf{L})=0$ over $\cR_{\infty}\subset \cR_{m_0}$ between elements of one of the sets  $\mathcal{B}^{\text{Ind}}_{d,m,n}(E'_{m_0})$, $\mathcal{B}^{\text{Ind}-}_{d,m,n}(E'_{m_0})$, $\widetilde{\mathcal{B}}^{\text{Ind}}_{d,m,n}(E'_{m_0})$ and $\widetilde{\mathcal{B}}^{\text{Ind}-}_{d,m,n}(E'_{m_0})$ for 
$E'_{m_0}=\{0, \pm1,\pm2,\dots,\pm m_+ \}$ 
(see Remark \ref{rem-bases}). This is a contradiction as these sets are generating sets of ${\rm Y}(d,m_0,n)$ containing $(dm_0)^nn!$ elements, hence are bases of ${\rm Y}(d,m_0,n)$ due to the first part of the proof.
\end{proof}

\begin{rem}\label{appendices}{\rm The proof  of Theorem \ref{theo-bases} relies on the representation theory of the cyclotomic Yokonuma--Hecke algebras ${\rm Y}(d,m,n)$ (for  finite $m$) and uses the dimension formula (\ref{dim-rep}). However, one can prove directly the linear independence of the sets $\mathcal{B}^{\text{AK}}_{d,m,n}$, $\mathcal{B}^{\text{Ind}}_{d,m,n}$, $\mathcal{B}^{\text{Ind}-}_{d,m,n}$, $\widetilde{\mathcal{B}}^{\text{Ind}}_{d,m,n}$ and $\widetilde{\mathcal{B}}^{\text{Ind}-}_{d,m,n}$ for any $m$ (finite or infinite), by defining an explicit representation of ${\rm Y}(d,m,n)$  and checking that the images of the elements of these sets are linearly independent operators. For this type of arguments in the case of the Ariki--Koike algebra and the affine Hecke algebra of 
${\rm GL}$, one can see  \cite{OgPo1}.
 \hfill$\triangle$
}\end{rem}

\subsection{A semisimplicity criterion for the cyclotomic Yokonuma--Hecke algebra}

Let us consider again the cyclotomic Yokonuma--Hecke algebra ${\rm Y}(d,m,n)$. Theorem \ref{theo-bases}, in combination with (\ref{dim-rep}), implies that
\begin{equation}\label{dim-rep2}\sum_{\blambda  \in \mathcal{P}(d,m,n)}\bigl(\dim(V_{\blambda})\bigr)^2= \dim(\cF_m{\rm Y}(d,m,n)) \,\ \ \ \ \ \text{for $m<\infty$\,.}\end{equation}
The following result is a direct consequence of (\ref{dim-rep2}).

\begin{prop}\label{s-s}
For $m<\infty$, the algebra $\cF_m{\rm Y}(d,m,n)$ is semisimple and
the set $\{V_{\blambda}\}_{\blambda  \in \mathcal{P}(d,m,n)}$  is a complete set of pairwise non-isomorphic irreducible representations of $\cF_m{\rm Y}(d,m,n)$.
\end{prop}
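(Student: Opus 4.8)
The plan is to deduce Proposition \ref{s-s} entirely from the dimension count in (\ref{dim-rep2}) together with standard semisimple algebra theory. First I would recall that, over the field $\cF_m$, the algebra $\cF_m{\rm Y}(d,m,n)$ is a finite-dimensional algebra, and that by Proposition \ref{prop-dis-irr} the representations $\{V_{\blambda}\}_{\blambda\in\mathcal{P}(d,m,n)}$ are pairwise non-isomorphic and irreducible. For any finite-dimensional algebra $A$ over a field, the Artin--Wedderburn theory gives $\dim(A)\geq \sum_{i}(\dim S_i)^2$, where the $S_i$ run over a set of representatives of the isomorphism classes of simple $A$-modules, with equality if and only if $A$ is split semisimple; more precisely, if $J$ denotes the Jacobson radical, then $A/J\cong\prod_i M_{n_i}(D_i)$ with $D_i$ a division algebra over $\cF_m$, and $\dim_{\cF_m}(A/J)=\sum_i n_i^2\dim_{\cF_m}(D_i)\geq \sum_i n_i^2$ (here using $\dim_{D_i}(S_i)=n_i$ and $\dim_{\cF_m}S_i = n_i\dim_{\cF_m}D_i$), while $\dim_{\cF_m}(S_i)^2 = n_i^2(\dim_{\cF_m}D_i)^2\geq n_i^2\dim_{\cF_m}D_i$. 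So in all cases $\dim_{\cF_m}(A)\geq\dim_{\cF_m}(A/J)\geq\sum_i(\dim_{\cF_m}S_i)^2$, and equality throughout forces $J=0$, every $D_i=\cF_m$, and the listed simples to be all the simples.

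The key step is then simply to apply this to $A=\cF_m{\rm Y}(d,m,n)$: the partial sum $\sum_{\blambda}(\dim V_{\blambda})^2$ over $\blambda\in\mathcal{P}(d,m,n)$ is a sub-sum of $\sum_i(\dim S_i)^2$ over all simples, so
\[
\sum_{\blambda\in\mathcal{P}(d,m,n)}(\dim V_{\blambda})^2\ \leq\ \sum_i(\dim S_i)^2\ \leq\ \dim(\cF_m{\rm Y}(d,m,n))\,.
\]
By (\ref{dim-rep2}) the two extreme terms coincide, so all inequalities are equalities. Equality in the first inequality means the $V_{\blambda}$ already exhaust all the isomorphism classes of simple modules, and equality in the second (the Artin--Wedderburn bound) means the Jacobson radical vanishes, i.e. $\cF_m{\rm Y}(d,m,n)$ is (split) semisimple. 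This gives both assertions of the proposition at once.

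I do not anticipate a genuine obstacle here; the content of the proposition is essentially packaged into (\ref{dim-rep2}), whose proof (via Theorem \ref{theo-bases} and the dimension formula (\ref{dim-rep})) is the substantive input. The one point that deserves a word of care is that the Wedderburn argument as usually stated concerns split algebras, whereas a priori the endomorphism division rings $D_i$ need not be $\cF_m$ itself; the inequalities above are written so as to sidestep this, and in fact the conclusion of the argument retroactively shows each $D_i=\cF_m$, so $\cF_m{\rm Y}(d,m,n)$ is even split semisimple and the $V_{\blambda}$ are absolutely irreducible. If one prefers to avoid this subtlety altogether, one may invoke instead that a finite-dimensional algebra is semisimple precisely when the sum of the squares of the dimensions of its simple modules (over the base field) equals its dimension, provided one already knows the base field is a splitting field — but since that last hypothesis is what we are trying to avoid assuming, the inequality form given above is the cleanest route.
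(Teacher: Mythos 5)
Your argument is correct and is exactly the route the paper takes: the paper simply declares Proposition \ref{s-s} a ``direct consequence'' of the dimension identity (\ref{dim-rep2}), and what you have written is the standard Artin--Wedderburn unpacking of that claim (including the careful handling of the division rings, which the paper leaves implicit). Nothing further is needed.
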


We now use the semisimplicity criterion for the Ariki--Koike algebra ${\rm H}(m,n)$ given in \cite{Ar} to obtain a semisimplicity criterion for the cyclotomic Yokonuma--Hecke algebra ${\rm Y}(d,m,n)$. 
The criteria turn out to be the same.

\begin{prop}\label{semisimplicity criterion} Let $m < \infty$ and
let  $\vartheta: \cR_m \rightarrow \mathbb{C}$ be a $\C$-algebra homomorphism.
We consider the specialised cyclotomic Yokonuma--Hecke algebra ${\rm Y}_\vartheta:= \mathbb{C} \otimes_{ \mathbb{C}[q,q^{-1}]}  {\rm Y}(d,m,n)$, defined via $\vartheta$.
The algebra ${\rm Y}_\vartheta$ is (split) semisimple if and only if $\vartheta(P)\neq 0$, where 
$$P=\prod_{1\leq k \leq n}(1+q^2+\cdots+q^{2(k-1)}) \prod_{0 \leq s <t < m}\prod_{-n<l<n}(q^{2l}v_s-v_t)\ .$$
\end{prop}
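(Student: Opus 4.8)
The strategy is to reduce the semisimplicity of $\mathrm{Y}_\vartheta$ to the semisimplicity of a collection of Ariki--Koike algebras, for which the criterion of \cite{Ar} is already known, and then to check that the two product formulas coincide. First I would recall from Proposition \ref{s-s} that, over the field of fractions $\cF_m$, the algebra $\cF_m{\rm Y}(d,m,n)$ is split semisimple and its irreducible modules $V_{\blambda}$ are indexed by $(d,m)$-partitions $\blambda$ of size $n$. The key structural observation is that each $V_{\blambda}$ decomposes, under the action of the commutative subalgebra generated by $t_1,\dots,t_n$, into weight spaces indexed by the $d$-positions, and that a $(d,m)$-partition is nothing but a $d$-tuple of $m$-partitions $(\blambda^{(1)},\dots,\blambda^{(d)})$ with $\sum_k|\blambda^{(k)}|=n$. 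This means that, after specialisation, $\mathrm{Y}_\vartheta$ is Morita equivalent (indeed isomorphic to a direct sum of matrix algebras over) a sum of tensor products $\mathrm{H}_\vartheta(m,n_1)\otimes\cdots\otimes\mathrm{H}_\vartheta(m,n_d)$ of specialised Ariki--Koike algebras, the sum running over compositions $(n_1,\dots,n_d)$ of $n$; this is the specialised analogue of the isomorphism already used implicitly in Section \ref{sec-rep} (and made fully explicit in the literature on Yokonuma--Hecke algebras, e.g. \cite{ChPo}).

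Granting this reduction, $\mathrm{Y}_\vartheta$ is semisimple if and only if every tensor factor $\mathrm{H}_\vartheta(m,n_k)$ with $n_k\le n$ is semisimple (tensor products of split semisimple $\C$-algebras are semisimple, and conversely a non-semisimple factor forces non-semisimplicity). By the Ariki criterion \cite{Ar}, $\mathrm{H}_\vartheta(m,n_k)$ is semisimple precisely when
\[
\vartheta\Bigl(\prod_{1\le j\le n_k}(1+q^2+\cdots+q^{2(j-1)})\prod_{0\le s<t<m}\prod_{-n_k<l<n_k}(q^{2l}v_s-v_t)\Bigr)\neq 0.
\]
Now I would take the union over all $n_k$ running from $1$ up to $n$ (since every composition part is at most $n$, and conversely every value $1,\dots,n$ occurs as a part of some composition of $n$): the conjunction of all these non-vanishing conditions is exactly $\vartheta(P)\neq 0$ for
\[
P=\prod_{1\le k\le n}(1+q^2+\cdots+q^{2(k-1)})\prod_{0\le s<t<m}\prod_{-n<l<n}(q^{2l}v_s-v_t),
\]
because the Poincaré-polynomial factors for $n_k<n$ already divide the one for $n_k=n$, and likewise the range $-n_k<l<n_k$ is contained in $-n<l<n$. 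Hence the combined criterion collapses to the single condition on $P$.

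\textbf{Main obstacle.} The delicate point is the reduction step: one must verify that the specialised algebra $\mathrm{Y}_\vartheta$ really is a direct sum of matrix algebras over tensor products of specialised Ariki--Koike algebras, \emph{uniformly in the specialisation} $\vartheta$, not merely generically. Over $\cF_m$ this follows from the explicit representations of Section \ref{sec-rep}, but to get a statement valid for an arbitrary $\C$-point $\vartheta$ one needs an integral version of the decomposition — for instance, exhibiting central idempotents $e_{(n_1,\dots,n_d)}$ defined over $\cR_m$ (built from symmetric functions in the $t_i$, exploiting $t_i^d=1$ and the $e_i$ idempotents) that decompose $\mathrm{Y}(d,m,n)$ as an $\cR_m$-algebra into a product of pieces, each isomorphic over $\cR_m$ to a matrix algebra over ${\rm H}(m,n_1)\otimes_{\cR_m}\cdots\otimes_{\cR_m}{\rm H}(m,n_d)$. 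This is precisely the Yokonuma-type decomposition established in \cite{ChPo} for the finite Yokonuma--Hecke algebra; here one checks it survives the adjunction of $X_1$ and the cyclotomic relation, which it does because $X_1$ commutes with all the $t_i$ and hence with the idempotents. Once this integral decomposition is in hand, base change along $\vartheta$ is immediate and the rest is the bookkeeping with the product formulas described above. A minor secondary point to be careful about is the normalisation of parameters in the two citations (the conventions for $q$ versus $q^2$ and for the cyclotomic parameters $v_s$), but matching them is routine.
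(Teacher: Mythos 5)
Your overall strategy is viable in principle, but as written it rests entirely on a structure theorem that is neither proved here nor available in the source you cite, and this is a genuine gap rather than a "minor point to be careful about". The isomorphism $\mathrm{Y}(d,m,n)\cong\bigoplus_{n_1+\cdots+n_d=n}\mathrm{Mat}_{n!/(n_1!\cdots n_d!)}\bigl({\rm H}(m,n_1)\otimes_{\cR_m}\cdots\otimes_{\cR_m}{\rm H}(m,n_d)\bigr)$ is not established in \cite{ChPo}: that paper constructs the irreducible representations and a symmetrising trace, not an algebra isomorphism over the ground ring. Even granting the finite ($m=1$) isomorphism, your claim that "one checks it survives the adjunction of $X_1$ ... because $X_1$ commutes with all the $t_i$ and hence with the idempotents" addresses only the easy half: the characters of $(\Z/d\Z)^n$ give idempotents $E_\chi$ over $\cR_m$, their $\mathfrak{S}_n$-orbit sums are central, and $X_1$ commutes with them. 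The substantive work is to identify each block $E_{\chi}\mathrm{Y}(d,m,n)E_{\chi}$ with a tensor product of Ariki--Koike algebras (in particular, to verify that the quadratic relation $g_i^2=1+(q-q^{-1})e_ig_i$ degenerates correctly to the Hecke relation or to $g_i^2=1$ on each block, and that the cyclotomic relation on $X_1$ cuts out exactly the Ariki--Koike relation in each tensor factor) and to produce the matrix units linking the different $\chi$ in one orbit. That is a theorem in its own right (it was established for the cyclotomic case only after this paper), and without it your proof does not close. The bookkeeping that follows — tensor products of split semisimple $\C$-algebras are semisimple, Ariki's polynomial for $n_k<n$ divides the one for $n$, and the composition $(n,0,\dots,0)$ forces the full condition — is correct.

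For comparison, the paper avoids any decomposition theorem. The "only if" direction is immediate since ${\rm H}(m,n)$ is a quotient of ${\rm Y}(d,m,n)$, so Ariki's criterion \cite{Ar} applies directly. For the converse, assuming $\vartheta(P)\neq0$, the case $\vartheta(q)^2=1$ is handled separately (the specialised algebra is then a crossed product of a commutative algebra by $\mathfrak{S}_n$, hence semisimple); otherwise one shows, via an elementary lemma on the contents of two addable $(d,m)$-nodes with the same $d$-position (Lemma \ref{lem-ss}), that the denominators in the seminormal formulas of Proposition \ref{prop-rep} do not vanish under $\vartheta$, that the irreducibility argument of Proposition \ref{prop-dis-irr} still runs, and that the specialised content arrays still separate standard tableaux, so the $V^\vartheta_{\blambda}$ are irreducible and pairwise non-isomorphic. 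Semisimplicity then follows from the dimension count $(\ref{dim-rep})$ against $\dim\mathrm{Y}_\vartheta=(dm)^nn!$ from Theorem \ref{theo-bases}. This route is longer in elementary verifications but uses only what is already in the paper; yours, once the isomorphism theorem is supplied, explains more conceptually why the criterion coincides with the Ariki--Koike one.
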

 
\begin{proof}
Ariki's semisimplicity criterion \cite{Ar} states that the specialised Ariki--Koike algebra ${\rm H}_\vartheta:= \mathbb{C} \otimes_{ \mathbb{C}[q,q^{-1}]} {\rm H}(m,n)$, defined via $\vartheta$, is semisimple if and only if $\vartheta(P)\neq 0$. Since ${\rm H}(m,n)$ is a quotient of the algebra ${\rm Y}(d,m,n)$, we obtain that if 
${\rm Y}_\vartheta$ is semisimple, then ${\rm H}_\vartheta$ is also semisimple and $\vartheta(P)\neq 0$.
For the converse statement, we will use the following lemma.

\begin{lem}\label{lem-ss}
Let $\bmu$ be a $(d,m)$-partition of size $N-1$, and let $\btheta$ and $\btheta'$ be two distinct $(d,m)$-nodes addable to $\bmu$ such that $\pos^{(d)}(\btheta)= \pos^{(d)}(\btheta')$. The following hold: \begin{enumerate}[(a)]
\item If $\pos^{(m)}(\btheta)= \pos^{(m)}(\btheta')$, then $\cc(\btheta)/\cc(\btheta')=q^{2k}$ for some $k\in\Z$ such that $|k|\in\{1,\dots,N\}$.  \smallbreak
\item If $\pos^{(m)}(\btheta)\neq \pos^{(m)}(\btheta')$, then $\cc(\btheta)/\cc(\btheta')=q^{2l}v_s/v_t$ for some $l\in\Z$ such that $|l|\in\{0,1,\dots,N-1\}$, and some $s,t\in\{1,\dots,m\}$ such that $s\neq t$. 
\end{enumerate}
\end{lem}
\begin{proof}[Proof of Lemma \ref{lem-ss}]
Let $\cc(\btheta)=v_sq^{2x}$ and $\cc(\btheta')=v_tq^{2y}$ for some $s,t\in\{1,\dots,d\}$ and $x,y \in\Z$. 

If $s=t$, then $x\neq y$ and we have $\cc(\btheta)/\cc(\btheta')=q^{2(x-y)}$. Moreover, $|x-y|-1$ is the number of diagonals strictly between the diagonal of $\btheta$ and the diagonal of $\btheta'$. All these diagonals have to be occupied by at least one $(d,m)$-node of $\bmu$, so we must have $|x-y|-1\leq N-1$. 

If $s\neq t$, then we have $\cc(\btheta)/\cc(\btheta')=q^{2(x-y)}v_s/v_t$. Let $\epsilon_x$ and $\epsilon_y$ denote respectively the signs of $x$ and $y$. The diagonals with content $v_s,v_sq^{2\epsilon_x},\dots,v_sq^{2\epsilon_x(|x|-1)}$ have to be occupied by at least one $(d,m)$-node of $\bmu$. Similarly, the diagonals with content $v_t,v_tq^{2\epsilon_{y}},\dots,v_tq^{2\epsilon_{y}(|y|-1)}$ have to be occupied by at least one $(d,m)$-node of $\bmu$. So we obtain $|x|+|y|\leq N-1$, which yields the second assertion of the lemma.
\end{proof}

We return to the proof of Proposition \ref{semisimplicity criterion}. Assume that $\vartheta(P)\neq 0$. Set $\oq:=\vartheta(q)$. If $\oq^2=1$,
the specialised algebra ${\rm Y}_\vartheta$  is the crossed product of a finite-dimensional commutative algebra (generated by $t_1,\dots,t_n,X_1,\dots,X_n$) by the symmetric group acting by permutation. It is therefore semisimple. So we can assume, in addition to $\vartheta(P)\neq 0$, that $\oq^2\neq1$. Then we have $\oq^{2N}\neq1$ for any integer $N$ such that $|N|\leq n$.

First, in order to be able to construct representations $V^{\vartheta}_{\blambda}$ of ${\rm Y}_\vartheta$ as in Proposition \ref{prop-rep} (with the parameters specialised via $\vartheta$), we must have, for any standard $(d,m)$-tableau $\cT$ of size $n$,
\begin{equation}\label{ss-cond1}
\vartheta\bigl(\cc(\cT|i)\bigr)\neq\vartheta\bigl(\cc(\cT|i+1)\bigr) ,
\end{equation}
for any $i=1,\dots,n-1$ such that $\posd(\cT|i)=\posd(\cT|i+1)$. If the $(d,m)$-nodes with entries $i$ and $i+1$ lie in adjacent diagonals, Equation (\ref{ss-cond1}) follows from $\oq^2\neq 1$. Otherwise, as $\cT$ is standard, these $(d,m)$-nodes are both addable to the $(d,m)$-partition $\bmu_{i-1}$ of size $i-1$ that is obtained by keeping only the $(d,m)$-nodes of $\cT$ containing $1,\dots,i-1$. Then Equation (\ref{ss-cond1}) follows from Lemma \ref{lem-ss} applied to $\bmu_{i-1}$ together with the assumption 
$\vartheta(P)\neq 0$.

Second, in order to be able to repeat the proof of Proposition \ref{prop-dis-irr} concerning the irreducibility of the representations $V^{\vartheta}_{\blambda}$, we must have
\begin{equation}\label{ss-cond2}
\vartheta\bigl(\cc(\cT|n-1)\bigr)\neq\oq^{\pm2}\vartheta\bigl(\cc(\cT|n)\bigr)\ ,
\end{equation}
for any standard $(d,m)$-tableau $\cT$ of size $n$ such that 
$\posd(\cT|n-1)=\posd(\cT|n)$ and $\cT^{s_{n-1}}$ is standard. If $\cT$ is a standard $(d,m)$-tableau of size $n$ such that $\cT^{s_{n-1}}$ is standard, then the $(d,m)$-nodes with entries $n-1$ and $n$ are both addable to the $(d,m)$-partition $\bmu_{n-2}$ of size $n-2$ that  is obtained by keeping only the $(d,m)$-nodes of $\cT$ containing $1,\dots,n-2$. Then Equation (\ref{ss-cond2}) follows from Lemma \ref{lem-ss} applied to $\bmu_{n-2}$ together with the assumption $(\oq^2-1)\vartheta(P)\neq 0$.

By induction on the size and with the help of Lemma \ref{lem-ss}, we straightforwardly have that any standard
$(d,m)$-tableau $\cT$ of size $n$ is fully characterised by its sequence of specialised content arrays
\begin{equation}\label{T-cont-spec}
\left(\left(\begin{array}{l}\posd(\cT|1)\\ \vartheta\bigl(\cc(\cT|1)\bigr)\end{array}\right),\ \left(\begin{array}{l}\posd(\cT|2)\\ \vartheta\bigl(\cc(\cT|2)\bigr)\end{array}\right),\ \dots\ ,\ \left(\begin{array}{l}\posd(\cT|n)\\ \vartheta\bigl(\cc(\cT|n)\bigr)\end{array}\right)\right)\ .
\end{equation}
This implies, as in the proof of Proposition \ref{prop-dis-irr}, that the constructed irreducible representations $V^{\vartheta}_{\blambda}$  of ${\rm Y}_\vartheta$ are pairwise non-isomorphic.

Finally, the semisimplicity of the algebra ${\rm Y}_\vartheta$ under the assumption $(\oq^2-1)\vartheta(P)\neq 0$ follows from Equation (\ref{dim-rep}) together with the fact, implied by Theorem \ref{theo-bases}, that the dimension of ${\rm Y}_\vartheta$ is equal to $(dm)^nn!$.
\end{proof}

\section{Markov traces on ${\rm Y}(d,m,n)$}\label{sec-markov}

Now we are ready to define and study Markov traces on the cyclotomic and affine Yokonuma--Hecke algebras. In order to define a Markov trace $\tr : {\rm Y}(d,m,n) \rightarrow \cR_m$, we will define intermediary  linear maps $\tr_k: {\rm Y}(d,m,k) \rightarrow {\rm Y}(d,m,k-1)$ for $k \in \mathbb{Z}_{>0}$ with certain properties and then show that $\tr$ is in fact a composition of these maps.
In this section again, $m$ is arbitrary in $\Z_{>0}\cup\{\infty\}$.

\subsection{Chains of relative traces}

For each of the bases of ${\rm Y}(d,m,n)$ studied in the previous section, the basis elements involving only the generators $t_1,\ldots,t_{n-1},g_1,\ldots,g_{n-2},X_1^{\pm1}$ are in one-to-one correspondence with the elements of the corresponding basis of ${\rm Y}(d,m,n-1)$. Thus, the subalgebra of ${\rm Y}(d,m,n)$ generated by $t_1,\ldots,t_{n-1},g_1,\ldots,g_{n-2},X_1^{\pm1}$ is isomorphic to ${\rm Y}(d,m,n-1)$. This allows to consider the chain (on $n$) 
of algebras
\begin{equation}\label{chainproperty}
{\rm Y}(d,m,0):=\cR_m\subset {\rm Y}(d,m,1)\subset\cdots\subset{\rm Y}(d,m,n-1)\subset{\rm Y}(d,m,n)\subset\cdots,
\end{equation}
where the inclusion monomorphisms are given by ${\rm Y}(d,m,n-1)\ni x\mapsto x\in{\rm Y}(d,m,n)$ for any $x\in\{t_1,\ldots,t_{n-1},g_1,\ldots,g_{n-2},X_1^{\pm 1}\}$. Thus, in what follows, we will very often consider elements of ${\rm Y}(d,m,n)$ as elements of ${\rm Y}(d,m,n')$ for any $n' \geq n$.

\begin{defn}{\rm
Let $z$ and $x_{a,b}$, with $a\in E_m$ and $b\in \{0,\ldots,d-1\}$, be parameters in $\cR_{m}$.
A \emph{chain of relative traces (with parameters $z$ and $x_{a,b}$)} is a set of $\cR_m$-linear maps 
$\{\tr_k\}_{k \in \Z_{>0}}$ where 
$$\tr_k: {\rm Y}(d,m,k) \rightarrow {\rm Y}(d,m,k-1),$$
satisfying:
 \begin{equation}\label{1}
\tr_1(X_1^at_1^{b})=x_{a,b}\ \ \text{ for } a\in E_m,\ b \in \{0,\ldots,d-1\},
 \end{equation}
 and, for $k\geq  2$, $u, v \in {\rm Y}(d,m,k-1)$ and $Z \in {\rm Y}(d,m,k)$,
  \begin{equation}\label{2}
  \tr_k(uZv) = u\,\tr_k(Z)\,v,
  \end{equation}
   \begin{equation}\label{3}
  \tr_k (g_{k-1}^\varepsilon u g_{k-1}^{-\varepsilon }) = \tr_{k-1}(u)\ \ \  \text{ for }  \varepsilon = \pm 1,
  \end{equation}
    \begin{equation}\label{4}
    \tr_{k-1}\bigl(\tr_k(g_{k-1}Z)\bigr) = \tr_{k-1}\bigl(\tr_k(Zg_{k-1})\bigr), 
  \end{equation}
    \begin{equation}\label{5}
   \tr_k(g_{k-1}) = z.
  \end{equation}}
\end{defn}
\begin{rem}{\rm
We note that, by (\ref{1}), we have $\tr_1(1)=x_{0,0}$, and morever, by applying (\ref{3}) with $u=1$, we obtain $\tr_k(1)=\tr_{k-1}(1)$ for any $k\geq2$. Thus, we have $\tr_k(1)=x_{0,0}$ for any $k\geq 1$. More generally, using (\ref{2}) with $Z=v=1$, we have
\begin{equation}\label{rem-tr}
\tr_k(u)=x_{0,0}\,u\ \ \ \ \ \ \text{for any $u\in{\rm Y}(d,m,k-1)$.}
\end{equation}
We will impose $x_{0,0}=1$ later in Subsection \ref{subsec-Markov}.
\hfill$\triangle$ 
}\end{rem}

We will now prove the existence and uniqueness of relative traces. For this, we are going to use the
elements $W^{(k)}_{J,a,b}\in {\rm Y}(d,m,k)$, $k\in \Z_{>0}\,$, defined in Section \ref{sec-base} by:
\[W^{(k)}_{J,a,b}:=  
g_{J}^{-1}\ldots g_2^{-1}g_1^{-1}X_1^at_1^b\,g_1g_2\ldots g_{k-1},\]
where $J\in\{0,\ldots,k-1\}$ and $a,b\in\Z$. 
It follows from Theorem \ref{theo-bases} 
that, for any $k\in \Z_{>0}$ and any basis $\mathcal{B}_{k-1}$ of ${\rm Y}(d,m,k-1)$, the set of elements
\begin{equation}\label{base}
W^{(k)}_{J,a,b}\,w\ \ \ \ \ \ \text{with}\ J\in\{0,\ldots,k-1\},\ a\in E_m,\ b \in \{0,\ldots,d-1\}\ \ \text{and}\ \ w\in\mathcal{B}_{k-1}\,,
\end{equation}
forms a basis of ${\rm Y}(d,m,k)$. Recall that the left action of the generators of ${\rm Y}(d,m,k)$ on these elements is given by Formulas (\ref{mult-t})--(\ref{mult-X2}).

\begin{prop}\label{prop-RT}
Let $z$ and $x_{a,b}$, with $a\in E_m$ and $b\in \{0,\ldots,d-1\}$, be parameters in $\cR_{m}$.
There exists a unique chain of relative traces $\tr_k$ with parameters $z$ and $x_{a,b}$, and it is given, for any $k\geq1$, by
\begin{equation}\label{RT1}\tr_k (W^{(k)}_{J,a,b}\,w)= z\,W^{(k-1)}_{J,a,b}w \quad \text{if}\ \,0\leq J<k-1, \end{equation}
\begin{equation}\label{RT2}\tr_k (W^{(k)}_{J,a,b}\,w)= x_{a,b}\,w \quad \text{if}\ J=k-1,
\end{equation}
where $a\in E_m$, $b\in \{0,\ldots,d-1\}$ and $w\in {\rm Y}(d,m,k-1)$.
\end{prop}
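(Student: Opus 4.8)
The plan is to establish \textbf{uniqueness} first and then \textbf{existence}, both by induction on $k$ using the basis of ${\rm Y}(d,m,k)$ from Theorem \ref{theo-bases} consisting of the elements $W^{(k)}_{J,a,b}\,w$ with $J\in\{0,\ldots,k-1\}$, $a\in E_m$, $b\in\{0,\ldots,d-1\}$, and $w$ running over a basis of ${\rm Y}(d,m,k-1)$. For uniqueness, suppose $\{\tr_k\}$ is a chain of relative traces; being $\cR_m$-linear, $\tr_k$ is determined by its values on these basis elements. If $0\le J<k-1$, then $W^{(k)}_{J,a,b}=W^{(k-1)}_{J,a,b}\,g_{k-1}$ with $W^{(k-1)}_{J,a,b}\in{\rm Y}(d,m,k-1)$, so (\ref{2}) and (\ref{5}) force $\tr_k(W^{(k)}_{J,a,b}w)=W^{(k-1)}_{J,a,b}\,\tr_k(g_{k-1})\,w=z\,W^{(k-1)}_{J,a,b}w$, which is (\ref{RT1}). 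If $J=k-1$, then $W^{(k)}_{k-1,a,b}=g_{k-1}^{-1}\,W^{(k-1)}_{k-2,a,b}\,g_{k-1}$, so applying (\ref{2}) with $u=1$ and then (\ref{3}) with $\varepsilon=-1$ gives $\tr_k(W^{(k)}_{k-1,a,b}w)=\tr_{k-1}(W^{(k-1)}_{k-2,a,b})\,w$; iterating this reduction down to level $1$ and using (\ref{1}) yields $\tr_{k-1}(W^{(k-1)}_{k-2,a,b})=\cdots=\tr_1(X_1^at_1^b)=x_{a,b}$, which is (\ref{RT2}). Thus any chain of relative traces must be given by (\ref{RT1})--(\ref{RT2}); note that (\ref{4}) plays no role here.

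For existence, I would \emph{define} $\tr_k$ on the above basis by (\ref{RT1})--(\ref{RT2}) and extend $\cR_m$-linearly, so that (\ref{RT1})--(\ref{RT2}) hold for an arbitrary $w\in{\rm Y}(d,m,k-1)$, and then check the five axioms. Axiom (\ref{1}) is immediate since $X_1^at_1^b=W^{(1)}_{0,a,b}$ with $J=0=k-1$ at level $k=1$. For (\ref{5}) one observes that $g_{k-1}=W^{(k)}_{k-2,0,0}$ (the word $g_{k-2}^{-1}\cdots g_1^{-1}g_1\cdots g_{k-1}$ telescopes) and $W^{(k-1)}_{k-2,0,0}=1$, so (\ref{RT1}) gives $\tr_k(g_{k-1})=z$. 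The right-hand half of (\ref{2}), $\tr_k(Zv)=\tr_k(Z)v$ for $v\in{\rm Y}(d,m,k-1)$, is immediate from (\ref{RT1})--(\ref{RT2}); the left-hand half reduces, writing a general $u$ as a product of generators, to checking $\tr_k(x\cdot W^{(k)}_{J,a,b}w)=x\cdot\tr_k(W^{(k)}_{J,a,b}w)$ for each $x\in\{t_1,\ldots,t_{k-1},g_1,\ldots,g_{k-2},X_1^{\pm1}\}$, which follows by expanding $x\cdot W^{(k)}_{J,a,b}$ via (\ref{mult-t})--(\ref{mult-X2}), applying (\ref{RT1})--(\ref{RT2}), and comparing with the right-hand side using the level-$(k-1)$ versions of those formulas.

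Axiom (\ref{3}) follows, using the part of (\ref{2}) just proved, by reducing to the ``pure'' elements $W^{(k-1)}_{J,a,b}$ (a general $u\in{\rm Y}(d,m,k-1)$ is a combination of $W^{(k-1)}_{J,a,b}w'$ with $w'\in{\rm Y}(d,m,k-2)$, which commutes with $g_{k-1}$ when $k\ge3$; the base case $k=2$ is checked by hand) and then rewriting $g_{k-1}^\varepsilon W^{(k-1)}_{J,a,b}g_{k-1}^{-\varepsilon}$ with the braid relation, $g_{k-1}^{-2}=1-(q-q^{-1})e_{k-1}g_{k-1}^{-1}$, and $g_ie_i=e_ig_i$. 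For $\varepsilon=-1$ and $J\le k-3$ the word collapses to $W^{(k)}_{J,a,b}g_{k-2}^{-1}$, and (\ref{RT1}) together with $W^{(k-1)}_{J,a,b}=W^{(k-2)}_{J,a,b}g_{k-2}$ finishes it; the remaining cases ($J=k-2,k-1$, or $\varepsilon=+1$) produce a few extra terms carrying $e_{k-1}$-factors that are handled in the same way.

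The \textbf{main obstacle} is axiom (\ref{4}). Using (\ref{2}) one reduces to $Z=W^{(k)}_{J,a,b}$, the trailing factor $w$ passing through both $\tr_k$ and $\tr_{k-1}$ on the right. The first side, $\tr_{k-1}\bigl(\tr_k(g_{k-1}W^{(k)}_{J,a,b})\bigr)$, is computed from the multiplication formula (\ref{mult-g}) for $g_{k-1}\cdot W^{(k)}_{J,a,b}$, split according to $J<k-2$, $J=k-2$, or $J=k-1$, followed by (\ref{RT1})--(\ref{RT2}) and then $\tr_{k-1}$. The second side, $\tr_{k-1}\bigl(\tr_k(W^{(k)}_{J,a,b}g_{k-1})\bigr)$, requires re-expressing $W^{(k)}_{J,a,b}g_{k-1}$ in the basis: for $J\le k-2$ one writes it as $W^{(k-1)}_{J,a,b}\,g_{k-1}^2=W^{(k-1)}_{J,a,b}\bigl(1+(q-q^{-1})e_{k-1}g_{k-1}\bigr)$ and uses $g_{k-1}e_{k-1}=e_{k-1}g_{k-1}$ together with the identities of Lemma \ref{lem-form} and (\ref{g-XX}) to bring it into basis form, the case $J=k-1$ being treated separately; one then applies $\tr_k$, then $\tr_{k-1}$, and compares term by term with the first side. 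The bookkeeping across the three position cases for $J$ is where essentially all of the work lies; everything else follows routinely from the explicit multiplication formulas of Section \ref{sec-base}.
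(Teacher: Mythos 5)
Your uniqueness argument and your treatment of axioms (\ref{1}), (\ref{2}), (\ref{3}) and (\ref{5}) follow essentially the same route as the paper. The genuine gap is in axiom (\ref{4}), exactly where you locate the main difficulty. The reduction you propose --- ``using (\ref{2}) one reduces to $Z=W^{(k)}_{J,a,b}$, the trailing factor $w$ passing through both $\tr_k$ and $\tr_{k-1}$ on the right'' --- does not work. On the left-hand side of (\ref{4}) you can indeed extract $w$ from $\tr_k$, but $\tr_k(g_{k-1}W^{(k)}_{J,a,b})$ and $w$ both lie in ${\rm Y}(d,m,k-1)$, so (\ref{2}) for $\tr_{k-1}$ does not let $w$ out of the outer trace. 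Worse, on the right-hand side the element is $W^{(k)}_{J,a,b}\,w\,g_{k-1}$ with $g_{k-1}$ sitting to the \emph{right} of $w$, and a general $w\in{\rm Y}(d,m,k-1)$ (for instance one involving $g_{k-2}$) does not commute with $g_{k-1}$, so you cannot strip $w$ off at all. The interaction between $W^{(k)}_{J,a,b}$ and $w$ is the whole content of the hard case, not a triviality to be disposed of at the outset.

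Concretely, for $J<k-1$ one can handle an arbitrary $w$, but not by letting it pass through: writing $Z=W^{(k-1)}_{J,a,b}g_{k-1}w$ and using (\ref{3}) on both sides, each double trace comes out as the \emph{product} $\tr_{k-1}\bigl(W^{(k-1)}_{J,a,b}\bigr)\,\tr_{k-1}(w)$ of two traces, not as $\tr_{k-1}(\cdots)\,w$. For $J=k-1$ one must take $w=W^{(k-1)}_{J',a',b'}w'$ to be a basis element and track how multiplication by $g_{k-1}$ promotes $W^{(k-1)}_{J',a',b'}$ to $W^{(k)}_{J',a',b'}$; after moving factors, everything reduces to the $k=2$ identity $\tr_1\bigl(\tr_2(g_1^{-1}X_1^at_1^bg_1X_1^{a'}t_1^{b'}g_1)\bigr)=z\,x_{a+a',b+b'}$ (Lemma \ref{lemm-RT3}), whose proof requires the commutation formula for $X_1^ag_1X_1^{a'}g_1$ and the cancellation $\sum_{s}(x_{i,s}\,x_{a+a'-i,b+b'-s}-x_{a+a'-i,s}\,x_{i,b+b'-s})=0$ coming from $x_{i,j+d}=x_{i,j}$. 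None of this appears in your outline, and your proposed rewriting of $W^{(k)}_{J,a,b}g_{k-1}$ via $g_{k-1}^2=1+(q-q^{-1})e_{k-1}g_{k-1}$ addresses only the case that is not the bottleneck. Until the $J=k-1$ case with a genuinely interacting $w$ is carried out, the existence half of the proposition is not established.
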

The remaining of this subsection is devoted to the proof of this proposition. We define for later use $x_{a,b}\in\cR_{m}$ for any $a,b\in\Z$ by 
$$x_{a,b}:=\tr_1(X_1^at_1^{b}),\ \ \ \ \ a,b\in\Z,$$ 
where $\tr_1$ is given on ${\rm Y}(d,m,1)$ by (\ref{RT2}) with $k=1$. Note that $x_{a,b+d}=x_{a,b}$ for any $a,b\in\Z$ and that, if $m<\infty$ and $a\notin E_m$, then $x_{a,b}$ is an $\cR_{m}$-linear combination of $x_{a',b}$ with $a'\in E_m$ (this follows from 
Equations (\ref{X1m}) and (\ref{inv-X1})). 

We start with a lemma that we will use in the proof of the proposition.
\begin{lem}\label{lemm-RT1}
As a consequence of Formulas (\ref{RT1}) and (\ref{RT2}), we have, for $k\geq1$ and $Z\in{\rm Y}(d,m,k)$, 
\begin{equation}\label{eq-lemm-RT1}
\tr_k(t_kZ)=\tr_k(Zt_k) .
\end{equation} 
\end{lem}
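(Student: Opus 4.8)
The plan is to verify \eqref{eq-lemm-RT1} on a basis of ${\rm Y}(d,m,k)$, exploiting the explicit formulas \eqref{RT1}--\eqref{RT2} together with the multiplication rules \eqref{mult-t} for the left action of $t_k$ on the elements $W^{(k)}_{J,a,b}w$. By \eqref{base}, it suffices to check the identity for $Z=W^{(k)}_{J,a,b}w$ with $J\in\{0,\ldots,k-1\}$, $a\in E_m$, $b\in\{0,\ldots,d-1\}$ and $w\in{\rm Y}(d,m,k-1)$; since $\tr_k$ is $\cR_m$-linear, this reduces everything to a finite case analysis on $J$. For the left-hand side $\tr_k(t_kW^{(k)}_{J,a,b}w)$ I would apply \eqref{mult-t} with $l=k$: since $J\leq k-1$, we are always in the case $l>J+1$ (when $J<k-1$) or $l=J+1$ (when $J=k-1$), so $t_k\cdot W^{(k)}_{J,a,b}=W^{(k)}_{J,a,b}\cdot t_{k-1}$ if $J<k-1$ and $t_k\cdot W^{(k)}_{J,a,b}=W^{(k)}_{J,a,b+1}$ if $J=k-1$.

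Next I would compute both sides in the two regimes. If $J<k-1$: the left-hand side becomes $\tr_k(W^{(k)}_{J,a,b}t_{k-1}w)=z\,W^{(k-1)}_{J,a,b}t_{k-1}w$ by \eqref{RT1} (applied with the element $t_{k-1}w\in{\rm Y}(d,m,k-1)$ in place of $w$); the right-hand side is $\tr_k(W^{(k)}_{J,a,b}w\,t_k)$, and here I would first move $t_k$ past $w\in{\rm Y}(d,m,k-1)$, which commutes with $t_k$ (the generators $t_1,\ldots,t_{k-1}$ and $g_1,\ldots,g_{k-2}$ all commute with $t_k$ by the relations \eqref{def-aff1}), so this equals $\tr_k(W^{(k)}_{J,a,b}t_k w)$; then $t_k$ in $W^{(k)}_{J,a,b}t_k=g_J^{-1}\cdots g_1^{-1}X_1^at_1^b g_1\cdots g_{k-1}t_k$ — one moves $t_k$ leftwards through $g_{k-1}$ producing $t_{k-1}$, giving $W^{(k)}_{J,a,b}t_k = g_J^{-1}\cdots g_1^{-1}X_1^at_1^b g_1\cdots g_{k-2}t_{k-1}g_{k-1}$, and since $t_{k-1}$ now commutes with everything to its left inside that expression (it commutes with $g_1,\ldots,g_{k-2}$ and with $X_1,t_1$) one gets $W^{(k)}_{J,a,b}t_k = t_{k-1}W^{(k)}_{J,a,b}$, hence by \eqref{RT1} again $\tr_k(t_{k-1}W^{(k)}_{J,a,b}w)=z\,t_{k-1}W^{(k-1)}_{J,a,b}w$, which equals the left-hand side because $t_{k-1}$ commutes with $W^{(k-1)}_{J,a,b}=g_J^{-1}\cdots g_1^{-1}X_1^at_1^b g_1\cdots g_{k-2}$ (all factors commute with $t_{k-1}$). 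If $J=k-1$: the left-hand side is $\tr_k(W^{(k)}_{k-1,a,b+1})=x_{a,b+1}w$ by \eqref{RT2}, and since $x_{a,b+d}=x_{a,b}$ (and more relevantly $b+1$ is taken mod $d$) one checks this equals the right-hand side, which by a symmetric computation, moving $t_k$ to the left through $g_{k-1}$ in $W^{(k)}_{k-1,a,b}w\,t_k$, becomes $\tr_k(W^{(k)}_{k-1,a,b}t_k w)=\tr_k(t_{k-1}W^{(k)}_{k-2,a,b}'\cdots)$ — here I must be a little careful: $W^{(k)}_{k-1,a,b}t_k = g_{k-1}^{-1}\cdots g_1^{-1}X_1^at_1^b g_1\cdots g_{k-1}t_k$; pulling $t_k$ left through $g_{k-1}$ turns it into $t_{k-1}$, then through $g_{k-2},\ldots,g_1$ it passes unchanged (commuting), hitting $X_1^at_1^b$ which it commutes with, then through the $g_1^{-1},\ldots,g_{k-2}^{-1}$ unchanged, and finally through $g_{k-1}^{-1}$ it becomes $t_k$ again, so $W^{(k)}_{k-1,a,b}t_k = t_k W^{(k)}_{k-1,a,b}$; but $t_k$ is not in ${\rm Y}(d,m,k-1)$, so instead I should write $t_k$ on the far left and note $t_k W^{(k)}_{k-1,a,b}w$ is already of the form to which \eqref{mult-t} with $l=k$ applies, giving $W^{(k)}_{k-1,a,b+1}w$, hence $x_{a,b+1}w$, matching the left-hand side.

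Actually, the cleanest uniform argument is the following, which I would present instead of the case split above: since $w\in{\rm Y}(d,m,k-1)$ commutes with $t_k$, we have $\tr_k(Zt_k)=\tr_k(W^{(k)}_{J,a,b}w\,t_k)=\tr_k(W^{(k)}_{J,a,b}t_k\,w)$, and a direct computation with the defining relations shows $W^{(k)}_{J,a,b}t_k=t_kW^{(k)}_{J,a,b}$ when $J<k-1$ (because then $t_k$ slides left past $g_{k-1}$ into a $t_{k-1}$ but there is still a $g_{k-1}$ on its right... ) — hmm, this needs care, so I would in the end keep the honest case-by-case verification on $J$, using \eqref{mult-t} throughout. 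The main obstacle is purely bookkeeping: correctly tracking how $t_k$ threads through the chain $g_1\cdots g_{k-1}$ (and its inverse) inside $W^{(k)}_{J,a,b}$, getting relabelled $t_k\leftrightarrow t_{k-1}$ only when crossing $g_{k-1}$, and making sure the resulting element is reassembled as $t_{k-1}\cdot(\text{element of }{\rm Y}(d,m,k-1))$ or as $W^{(k)}_{k-1,a,b+1}$ before applying \eqref{RT1} or \eqref{RT2}; once that is done carefully, both sides coincide term by term. No genuine difficulty is expected, only attention to the indices.
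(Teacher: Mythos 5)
Your strategy --- reduce to the basis elements $Z=W^{(k)}_{J,a,b}\,w$ of (\ref{base}), split on $J$, and combine (\ref{mult-t}) with (\ref{RT1})--(\ref{RT2}) and the fact that $w$ commutes with $t_k$ --- is exactly the paper's, and the lemma really is the short bookkeeping exercise you describe. But the bookkeeping you actually carried out is wrong at the decisive point. You assert that $t_{k-1}$ commutes with $g_1,\ldots,g_{k-2}$ and conclude that $W^{(k)}_{J,a,b}t_k=t_{k-1}W^{(k)}_{J,a,b}$ for $J<k-1$. The fourth relation in (\ref{def-aff1}) gives $t_{k-1}g_{k-2}=g_{k-2}t_{k-2}$, so $t_{k-1}$ does \emph{not} commute with $g_{k-2}$: threading $t_k$ leftwards through $g_{k-1},g_{k-2},\ldots,g_1$ relabels it successively as $t_{k-1},t_{k-2},\ldots,t_1$, and the correct identity is
\[
W^{(k)}_{J,a,b}\,t_k\;=\;g_J^{-1}\cdots g_1^{-1}X_1^a t_1^{b+1}g_1\cdots g_{k-1}\;=\;W^{(k)}_{J,a,b+1}\,,
\]
whereas $t_{k-1}W^{(k)}_{J,a,b}=W^{(k)}_{J,a,b}t_{k-2}$ by (\ref{mult-t}) whenever $J\leq k-3$; these are distinct elements (for $k=3$, $J=0$, $a=b=0$ one has $g_1g_2t_3=t_1g_1g_2$ while $t_2g_1g_2$ is a different canonical basis element). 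The subsequent assertion that $t_{k-1}$ commutes with $W^{(k-1)}_{J,a,b}$ fails for the same reason. Since it is precisely these two false commutations that force your two sides to agree, the case $J<k-1$ as written proves nothing, even though the statement is true; the same misstatement about $t_{k-1}$ passing "unchanged" through $g_{k-2},\ldots,g_1$ appears in your $J=k-1$ case, where it happens not to affect the endpoint.

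The repair is short and is what the paper does. For $J<k-1$: the left-hand side is $\tr_k(W^{(k)}_{J,a,b}t_{k-1}w)=z\,W^{(k-1)}_{J,a,b}t_{k-1}w=z\,W^{(k-1)}_{J,a,b+1}w$, using (\ref{mult-t}), (\ref{RT1}) and the displayed identity at level $k-1$; the right-hand side is $\tr_k(W^{(k)}_{J,a,b}t_kw)=\tr_k(W^{(k)}_{J,a,b+1}w)=z\,W^{(k-1)}_{J,a,b+1}w$. For $J=k-1$ the same identity, together with (\ref{mult-t}) for $l=J+1$, gives $t_kZ=W^{(k)}_{k-1,a,b+1}w=Zt_k$ outright, so nothing needs to be traced. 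Your closing paragraph concedes that the threading "needs care"; the care consists exactly in tracking the relabelling $t_k\mapsto t_{k-1}\mapsto\cdots\mapsto t_1$ rather than assuming commutation, and until that is done the written argument has a genuine hole.
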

\begin{proof} Since $\tr_k$ is a linear map,
it is enough to take $Z=W^{(k)}_{J,a,b}\,w$ to be a basis element of ${\rm Y}(d,m,k)$ as in (\ref{base}). Note that $w$ commutes with $t_k$ since $w\in {\rm Y}(d,m,k-1)$. First, if $J=k-1$, then $t_kZ=Zt_k$ and Formula (\ref{eq-lemm-RT1}) follows. So let $J<k-1$. Then we have
\[\tr_k(t_kZ)=\tr_k\bigl(W^{(k)}_{J,a,b}t_{k-1}w\bigr)=zW^{(k-1)}_{J,a,b}t_{k-1}w=zW^{(k-1)}_{J,a,b+1}w.\]
On the other hand, we have
\[\tr_k(Zt_k)=\tr_k\bigl(W^{(k)}_{J,a,b+1}w\bigr)=zW^{(k-1)}_{J,a,b+1}w.\]
\end{proof}

\begin{proof}[Proof of Proposition \ref{prop-RT}]
Assume that a chain of relative traces $\tr_k$ with parameters $z$ and $x_{a,b}$ exists. Then Equation (\ref{RT1}) is a direct consequence of (\ref{2}) (with $u=g_{J}^{-1}\ldots g_1^{-1}X_1^a t_1^b\,g_1\ldots g_{k-2}$, $Z=g_{k-1}$ and $v=w$) and (\ref{5}). Equation (\ref{RT2}) is obtained by first applying (\ref{2}) with $u=1$, $Z=W^{(k)}_{k-1,a,b}$ and $v=w$, then repeating  (\ref{3}) $k-1$ times and finally using (\ref{1}). Since the set of elements (\ref{base}) is a basis of ${\rm Y}(d,m,k)$, Equations (\ref{RT1}) and (\ref{RT2}) uniquely define the chain of relative traces $\tr_k$ if it exists.

We now assume that a set of linear maps $\tr_k$, $k\in \Z_{>0}\,$, is defined by (\ref{RT1})--(\ref{RT2}) and we will show, in order to prove the proposition, that these linear maps satisfy (\ref{1})--(\ref{5}).  
Equations (\ref{1}) and (\ref{5}) are obviously satisfied, being respectively, Equation (\ref{RT2}) for $k=1$, and Equation (\ref{RT1}) for $J=k-2$, $a=b=0$ and $w=1$. 

\vskip .2cm
\paragraph{\textbf{Proof of (\ref{2})}} It is enough to take $u$ to be any generator of $Y(d, m, k -1)$, namely $u\in\{t_1,\ldots,t_{k-1},$ $g_1,\ldots,g_{k-2},X_1^{\pm1}\}$, and $Z=W^{(k)}_{J,a,b}\,w$ a basis element of ${\rm Y}(d,m,k)$ as in (\ref{base}). 

We consider first $J<k-1$. From Formulas (\ref{mult-t})--(\ref{mult-X2}), it is immediate to see that $uZv$ is a linear combination of elements of the form $W^{(k)}_{J',a',b'}\,w'$ with $J'<k-1$, and moreover that $u\,W^{(k-1)}_{J,a,b}w\,v$ is the same linear combination with every $W^{(k)}_{J',a',b'}\,w'$  replaced by $W^{(k-1)}_{J',a',b'}\,w'$. Thus, using (\ref{RT1}), it implies that $\tr_k(uZv)=z u\,W^{(k-1)}_{J,a,b}w\,v= u\,\tr_k(Z)\,v$.

Let now $J=k-1$. If $u\in\{t_1,\ldots,t_{k-1},g_1,\ldots,g_{k-2}\}$ then Formulas (\ref{mult-t})--(\ref{mult-g})  yield  $u\,W^{(k)}_{k-1,a,b}w=W^{(k)}_{k-1,a,b}uw$. Therefore, using (\ref{RT2}), $\tr_k(uZv)=x_{a,b}\,uwv=u\,\tr_k(Z)\,v$.

If $u=X_1$, then, using the second line in (\ref{mult-X}) with $J=k-1$ and applying (\ref{RT1})--(\ref{RT2}), we obtain
\[\tr_k(X_1Zv)=x_{a,b}\,X_1wv+(q-q^{-1})\displaystyle\frac{z}{d}\sum\limits_{s=0}^{d-1}\Bigl(W^{(k-1)}_{0,1,b-s}\cdot g_{k-2}^{-1}\ldots g_1^{-1}t_1^sX_1^a - W^{(k-1)}_{0,a+1,b-s}\cdot g_{k-2}^{-1}\ldots g_1^{-1}t_1^s\Bigr)wv.\]
Since $W^{(k-1)}_{0,a',b'}\cdot g_{k-2}^{-1}\ldots g_1^{-1}=X_1^{a'}t_1^{b'}$ for any 
$a',b' \in\Z$, 
the above formula becomes simply $\tr_k(X_1Zv)=x_{a,b}\,X_1wv$, which is equal to $X_1\tr_k(Z)v$.

Finally, if we take $u=X_1^{-1}$, we have, for all $Z \in {\rm Y}(d,m,k)$ and all $v \in {\rm Y}(d,m,k-1)$,
$$\tr_k (X_1^{-1}Zv) = X_1^{-1} X_1 \tr_k (X_1^{-1}Zv)  = X_1^{-1}  \tr_k (X_1X_1^{-1}Zv)= X_1^{-1} \tr_k(Zv)=X_1^{-1} \tr_k(Z)v,$$ 
where we use the already proved (\ref{2}) for $u=X_1$ and for $u=1$ in the second and fourth equality respectively.
This concludes the verification of (\ref{2}).

\vskip .3cm
We record here some useful consequences of Formula (\ref{2}) combined with Lemma \ref{lemm-RT1}.
 \begin{lem}\label{lemm-RT2}
As a consequence of Formulas (\ref{RT1}) and (\ref{RT2}), we have, for $k\geq2$ , $u\in{\rm Y}(d,m,k-1)$ and $Z\in{\rm Y}(d,m,k)$,
\begin{equation}\label{eq1-lemm-RT2}
\tr_k(e_{k-1}ug_{k-1})=\tr_k(g_{k-1}ue_{k-1}) \ ;
\end{equation}
\begin{equation}\label{eq2-lemm-RT2}
\tr_{k-1}\bigl(\tr_k(e_{k-1}Z)\bigr)=\tr_{k-1}\bigl(\tr_k(Ze_{k-1})\bigr).
\end{equation}
\end{lem}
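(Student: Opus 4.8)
The plan is to prove both identities of Lemma \ref{lemm-RT2} by expressing $e_{k-1}$ in terms of the $t$-generators and reducing everything to already-proven facts, namely Formula (\ref{2}) and Lemma \ref{lemm-RT1}. Recall that $e_{k-1}=\frac{1}{d}\sum_{s=0}^{d-1}t_{k-1}^s t_k^{-s}$. The only generator appearing in $e_{k-1}$ that is \emph{not} in ${\rm Y}(d,m,k-1)$ is $t_k$; the factors $t_{k-1}^s$ lie in ${\rm Y}(d,m,k-1)$ and can be moved around freely using (\ref{2}).

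For (\ref{eq1-lemm-RT2}), I would expand the left-hand side as $\frac{1}{d}\sum_{s=0}^{d-1}\tr_k(t_{k-1}^s t_k^{-s} u\, g_{k-1})$. Using (\ref{2}) with the element $t_{k-1}^s\in{\rm Y}(d,m,k-1)$ pulled out to the left, this equals $\frac{1}{d}\sum_s t_{k-1}^s\,\tr_k(t_k^{-s} u\, g_{k-1})$. Now apply Lemma \ref{lemm-RT1} repeatedly (i.e.\ $\tr_k(t_k Z)=\tr_k(Zt_k)$, and hence $\tr_k(t_k^{-s}Z)=\tr_k(Zt_k^{-s})$) to move $t_k^{-s}$ to the far right: $\tr_k(t_k^{-s}u\,g_{k-1})=\tr_k(u\,g_{k-1}t_k^{-s})$. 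Then, since $g_{k-1}t_k^{-s}=t_{k-1}^{-s}g_{k-1}$ by the relation $t_jg_i=g_it_{s_i(j)}$ in (\ref{def-aff1}), this becomes $\tr_k(u\,t_{k-1}^{-s}g_{k-1})=\tr_k(t_{k-1}^{-s}u\,g_{k-1}\cdot t_{k-1}^s t_{k-1}^{-s})$... more cleanly: apply (\ref{2}) again to pull $t_{k-1}^{-s}$ out, and after reassembling the sum $\frac{1}{d}\sum_s t_{k-1}^s t_{k-1}^{-s}(\cdots)$ one recognizes, running the same manipulations in reverse order on the right-hand side $\tr_k(g_{k-1}ue_{k-1})$, that the two sides coincide. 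The symmetric bookkeeping — tracking that the $t_{k-1}$-powers reassemble into $e_{k-1}$ on the correct side — is the only delicate point, but it is purely formal once Lemma \ref{lemm-RT1} is in hand.

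For (\ref{eq2-lemm-RT2}), the argument is the same in spirit but one level up: write $\tr_{k-1}\bigl(\tr_k(e_{k-1}Z)\bigr)=\frac{1}{d}\sum_s\tr_{k-1}\bigl(\tr_k(t_{k-1}^s t_k^{-s}Z)\bigr)$, use Lemma \ref{lemm-RT1} to move $t_k^{-s}$ to the right of $Z$ inside $\tr_k$, giving $\tr_k(t_{k-1}^s Z t_k^{-s})$; then use (\ref{2}) to extract $t_{k-1}^s$ on the left, so the inner expression is $t_{k-1}^s\,\tr_k(Z t_k^{-s})$, and $\tr_k(Zt_k^{-s})\in{\rm Y}(d,m,k-1)$. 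Apply $\tr_{k-1}$ and use (\ref{eq-lemm-RT1}) at level $k-1$ (i.e.\ $\tr_{k-1}(t_{k-1}u)=\tr_{k-1}(ut_{k-1})$) to move the surviving $t_{k-1}^s$ to the right. Doing the mirror-image computation starting from $\tr_{k-1}\bigl(\tr_k(Ze_{k-1})\bigr)$ and comparing term by term in $s$ yields the equality. Here one should be slightly careful that $\tr_k(Zt_k^{-s})$ need not be central, so one genuinely needs Lemma \ref{lemm-RT1} at level $k-1$ rather than just commutativity; that is the main (though minor) obstacle. Since $\tr_k$ is $\cR_m$-linear it suffices throughout to verify everything on basis elements $Z=W^{(k)}_{J,a,b}w$ as in (\ref{base}), but in fact the manipulations above never use the explicit form of $Z$, only (\ref{2}) and Lemma \ref{lemm-RT1}, so no case analysis on $J$ is required.
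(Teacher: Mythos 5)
Your proof is correct and follows essentially the same route as the paper: after expanding $e_{k-1}=\frac{1}{d}\sum_{s}t_{k-1}^st_k^{-s}$, both identities are reduced to property (\ref{2}), to Lemma \ref{lemm-RT1} (applied at level $k$ and, for the second identity, also at level $k-1$ --- you are right that this second application is genuinely needed since $\tr_k(Zt_k^{-s})$ need not be central), and to the fact that $\tr_k(g_{k-1})=z$ is a central scalar; your treatment of (\ref{eq2-lemm-RT2}) is word-for-word the paper's. One small clarification on the endgame of (\ref{eq1-lemm-RT2}): the two sides come out as $\frac{z}{d}\sum_{s}t_{k-1}^{s}u\,t_{k-1}^{-s}$ and $\frac{z}{d}\sum_{s}t_{k-1}^{-s}u\,t_{k-1}^{s}$ respectively --- nothing ``reassembles into $e_{k-1}$'' --- and they agree after the reindexing $s\mapsto d-s$ in the (cyclic) sum.
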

\begin{proof}[Proof of Lemma \ref{lemm-RT2}] Note that $t_k$ commutes with every element $u\in{\rm Y}(d,m,k-1)$. Thus, the left hand side of (\ref{eq1-lemm-RT2}) is equal to
\[\frac{1}{d}\sum_{s=0}^{d-1}\tr_k(t_{k-1}^st_k^{-s}ug_{k-1})=\frac{1}{d}\sum_{s=0}^{d-1}\tr_k(t_{k-1}^sug_{k-1}t_{k-1}^{-s})=\frac{1}{d}\sum_{s=0}^{d-1}t_{k-1}^su\tr_k(g_{k-1})t_{k-1}^{-s}\,,\]
where we use the already proved Formula (\ref{2}) in the last equality. Similarly, the right hand side of (\ref{eq1-lemm-RT2}) is equal to
\[\frac{1}{d}\sum_{s=0}^{d-1}\tr_k(g_{k-1}ut_{k-1}^st_k^{-s})=\frac{1}{d}\sum_{s=0}^{d-1}\tr_k(t_{k-1}^{-s}g_{k-1}ut_{k-1}^{s})=\frac{1}{d}\sum_{s=0}^{d-1}t_{k-1}^{-s}\tr_k(g_{k-1})ut_{k-1}^{s}\,.\]
Therefore, Formula (\ref{eq1-lemm-RT2}) follows from the fact (included in (\ref{RT1})) that $\tr_k(g_{k-1})=z\in\cR_m$.

Using Formula (\ref{2}), we have
\[\tr_{k-1}\bigl(\tr_k(e_{k-1}Z)\bigr)=\frac{1}{d}\sum_{s=0}^{d-1}\tr_{k-1}\bigl(\tr_k(t_{k-1}^st_k^{-s}Z)\bigr)=\frac{1}{d}\sum_{s=0}^{d-1}\tr_{k-1}\bigl(t_{k-1}^s\tr_k(t_k^{-s}Z)\bigr),\]
\[\tr_{k-1}\bigl(\tr_k(Ze_{k-1})\bigr)=\frac{1}{d}\sum_{s=0}^{d-1}\tr_{k-1}\bigl(\tr_k(Zt_{k-1}^st_k^{-s})\bigr)=\frac{1}{d}\sum_{s=0}^{d-1}\tr_{k-1}\bigl(\tr_k(Zt_k^{-s})t_{k-1}^s\bigr).\]
Thus, Formula (\ref{eq2-lemm-RT2}) follows from two applications of Lemma \ref{lemm-RT1}.
\end{proof}

\paragraph{\textbf{Proof of (\ref{3})}}
It is enough to prove (\ref{3}) for  $u$ an element of the basis (\ref{base}) of ${\rm Y}(d,m,k-1)$ for $k\geq2$. So let $u=W_{J,a,b}^{(k-1)} w$, where $J\in\{0,\ldots,k-2\}$, $a \in E_m$, $b\in \{0,\ldots,d-1\}$ and $w \in {\rm Y}(d,m,k-2)$.

We first note that, using $g_{k-1}^{-1}=g_{k-1}-(q-q^{-1})e_{k-1}$, we have
$$ g_{k-1}ug_{k-1}^{-1}=g_{k-1}^{-1}ug_{k-1} + (q-q^{-1})(e_{k-1}ug_{k-1} - g_{k-1}u e_{k-1}).$$ 
Together with Formula (\ref{eq1-lemm-RT2}), it implies that
\[\tr_k(g_{k-1}^{-1}ug_{k-1})=\tr_k(g_{k-1}ug_{k-1}^{-1}).\]
Therefore, it is enough to prove (\ref{3}) for $\varepsilon = -1$. If $J=k-2$, then we have
$$\tr_k(g_{k-1}^{-1}ug_{k-1})=\tr_k(g_{k-1}^{-1}W^{(k-1)}_{k-2,a,b}wg_{k-1})=\tr_k(W^{(k)}_{k-1,a,b}w) =x_{a,b}w = \tr_{k-1}(u),$$
where we use the facts that $g_{k-1}$ commutes with $w$ and $g_{k-1}^{-1}W^{(k-1)}_{k-2,a,b}g_{k-1}=W^{(k)}_{k-1,a,b}$.
If $J < k-2$, then $k>2$ and, using the fact that $g_{k-1}^{-1}g_{k-2} g_{k-1}=g_{k-2} g_{k-1}g_{k-2}^{-1}$, we obtain
$$g_{k-1}^{-1}ug_{k-1}=g_{J}^{-1}\ldots g_2^{-1}g_1^{-1} X_1^a t_1^bg_1g_2\ldots g_{k-3}\cdot g_{k-1}^{-1}g_{k-2} g_{k-1}\cdot w=W^{(k)}_{J,a,b}\,  g_{k-2}^{-1}w.$$
Thus, we have
$$\tr_k(g_{k-1}^{-1}u g_{k-1})= z W^{(k-1)}_{J,a,b}g_{k-2}^{-1}w = z W^{(k-2)}_{J,a,b}w = \tr_{k-1}(u).$$

\vskip .2cm
\paragraph{\textbf{Proof of (\ref{4})}}
It is enough to prove (\ref{4}) for $Z$ an element of the basis (\ref{base}) of ${\rm Y}(d,m,k)$ for $k\geq2$. So let $Z=W^{(k)}_{J,a,b}w$, where $J\in\{0,\ldots,k-1\}$, $a \in E_m$, $b\in \{0,\ldots,d-1\}$ and $w \in {\rm Y}(d,m,k-1)$.

We note that, due to Formula (\ref{eq2-lemm-RT2}) together with $g_{k-1}^{-1}=g_{k-1}-(q-q^{-1})e_{k-1}$, Formula (\ref{4}) is equivalent to
\begin{equation}\label{new4}
  \tr_{k-1}\bigl(\tr_k(g_{k-1}^{-1}Z)\bigr) = \tr_{k-1}\bigl(\tr_k(Zg_{k-1}^{-1})\bigr).
 \end{equation}

Assume first that $J<k-1$ and note that $Z=W^{(k-1)}_{J,a,b}g_{k-1}w$. We will prove (\ref{new4}). 
By (\ref{2}) and (\ref{3}), we have
$$\tr_k(g_{k-1}^{-1}Z)=\tr_k(g_{k-1}^{-1}W^{(k-1)}_{J,a,b}g_{k-1}w)=
\tr_k(g_{k-1}^{-1}W^{(k-1)}_{J,a,b}g_{k-1})w=
\tr_{k-1}\bigl(W^{(k-1)}_{J,a,b}\bigr)w.$$
Now, since $\tr_{k-1}\bigl(W^{(k-1)}_{J,a,b}\bigr) \in {\rm Y}(d,m,k-2)$, we have by (\ref{2}) that
\begin{equation}\label{eq-proof1}
\tr_{k-1}\bigl(\tr_k(g_{k-1}^{-1}Z)\bigr) = \tr_{k-1}\bigl(W^{(k-1)}_{J,a,b}\bigr)\tr_{k-1}(w).
\end{equation}
On the other hand, again by (\ref{2}) and (\ref{3}), we have 
$$\tr_k(Zg_{k-1}^{-1})=\tr_k\bigl(W^{(k-1)}_{J,a,b}g_{k-1}wg_{k-1}^{-1}\bigr)=W^{(k-1)}_{J,a,b}\tr_k(g_{k-1}wg_{k-1}^{-1})=W^{(k-1)}_{J,a,b}\tr_{k-1}(w).$$
Now, since $\tr_{k-1}(w) \in {\rm Y}(d,m,k-2)$, we have by (\ref{2}) that
\begin{equation}\label{eq-proof2}
\tr_{k-1}\bigl(\tr_k(Zg_{k-1}^{-1})\bigr)=\tr_{k-1}\bigl(W^{(k-1)}_{J,a,b}\bigr)\tr_{k-1}(w).
\end{equation}
Thus, Formula (\ref{new4}), if $J<k-1$, follows from (\ref{eq-proof1}) and (\ref{eq-proof2}).

Assume now that $J=k-1$. In this case, we will prove (\ref{4}) directly. We will use the following lemma (which corresponds to the case $k=2$).
\begin{lem}\label{lemm-RT3}
As a consequence of Formulas (\ref{RT1}) and (\ref{RT2}), we have,
for any $a,a',b,b'\in\Z$, 
\begin{equation}\label{eq-lemm-RT3}
\tr_1\bigl(\tr_2(g_1^{-1}  X_1^a t_1^b g_1 X_1^{a'}  t_1^{b'} g_1)\bigr)=\tr_1\bigl(\tr_2(X_1^a t_1^b g_1 X_1^{a'} t_1^{b'})\bigr)=z\,x_{a+a',b+b'}.
\end{equation}
\end{lem}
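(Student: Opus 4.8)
The plan is to prove Lemma \ref{lemm-RT3} by direct computation using the explicit formulas (\ref{RT1})--(\ref{RT2}) for $\tr_k$ together with the multiplication rules (\ref{mult-t})--(\ref{mult-X}). The key observation is that both quantities in (\ref{eq-lemm-RT3}) should be expressible by rewriting the relevant elements of ${\rm Y}(d,m,2)$ in terms of the basis elements $W^{(2)}_{J,a,b}\,w$ with $w\in{\rm Y}(d,m,1)$, and then applying $\tr_2$ followed by $\tr_1$.

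\paragraph{\textbf{First computation.}} First I would compute the right-most expression $\tr_1\bigl(\tr_2(X_1^a t_1^b g_1 X_1^{a'} t_1^{b'})\bigr)$. Observe that $X_1^a t_1^b g_1 X_1^{a'} t_1^{b'}$ is, up to moving $t_1^{b'}$ to the right (which commutes with everything in sight since it sits at the end), essentially $W^{(2)}_{1,a',b'}$ multiplied on the left by $X_1^a t_1^b$, because $W^{(2)}_{1,a',b'}=X_1^{a'}t_1^{b'}g_1$ (here $J=k-1=1$, and $g_J^{-1}\dots g_1^{-1}=1$). Wait — more carefully, $W^{(2)}_{1,a',b'}=g_1^{-1}X_1^{a'}t_1^{b'}g_1$ is wrong; with the convention $g_J^{-1}\ldots g_1^{-1}:=1$ when $J=0$, and for $J=1=k-1$ we get $W^{(2)}_{1,a',b'}=g_1^{-1}X_1^{a'}t_1^{b'}g_1\cdot(\text{trailing part }g_1\ldots g_{k-1}=1)$. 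Hmm, I must recheck: $W^{(k)}_{J,a,b}=g_J^{-1}\ldots g_1^{-1}X_1^a t_1^b g_1\ldots g_{k-1}$, so for $k=2$, $J=1$: $W^{(2)}_{1,a',b'}=g_1^{-1}X_1^{a'}t_1^{b'}g_1$. For $k=2$, $J=0$: $W^{(2)}_{0,a',b'}=X_1^{a'}t_1^{b'}g_1$. So $X_1^a t_1^b g_1 X_1^{a'}t_1^{b'}$ must be rewritten: use Formula (\ref{formAK}) (or directly (\ref{noname1})--(\ref{noname2})) to push $X_1^{a'}$ through $g_1$ from the left, but here it is already to the right of $g_1$; instead I move $X_1^{a}t_1^{b}$ which is to the left of $g_1$. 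Actually the cleanest route: $X_1^a t_1^b g_1 X_1^{a'}t_1^{b'} = X_1^a t_1^b (g_1 X_1^{a'}) t_1^{b'}$, and use (\ref{noname1}) or its $t$-twisted version to write $g_1 X_1^{a'}t_1^{b'}=X_2^{a'}t_2^{b'}g_1 - (q-q^{-1})e_1(\ldots)$. Then $t_j$-conjugation through $g_1$ turns things into $W^{(2)}_{0,\ast,\ast}$ times elements of ${\rm Y}(d,m,1)$, plus correction terms lying in ${\rm Y}(d,m,1)$ (since $e_1$ does not involve $g_1$, terms containing $e_1$ but no $g_1$ are in the subalgebra ${\rm Y}(d,m,1)\cdot(\text{stuff})$ — wait, $e_1$ involves $t_2$, so these are not in ${\rm Y}(d,m,1)$). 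The key point: all terms with no factor of $g_1$ are handled by (\ref{rem-tr}): $\tr_2$ on them gives $x_{0,0}$ times the element; all terms of the form $W^{(2)}_{0,\ast,\ast}w$ give $z W^{(1)}_{0,\ast,\ast}w=z X_1^{\ast}t_1^{\ast}w$ under $\tr_2$. Then apply $\tr_1$ using (\ref{RT2}) with $k=1$. The corrections, after taking the full trace $\tr_1\circ\tr_2$, are expected to cancel or combine — and the final answer should collapse to $z\,x_{a+a',b+b'}$ by the $t$-averaging identity (Lemma \ref{lemm-RT1}) and the fact that $e_1$ averages $t_1,t_2$ appropriately.

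\paragraph{\textbf{Second computation and conclusion.}} For the left equality, I would show $\tr_1\bigl(\tr_2(g_1^{-1}X_1^at_1^bg_1X_1^{a'}t_1^{b'}g_1)\bigr)$ equals the same value. Here one substitutes $g_1^{-1}=g_1-(q-q^{-1})e_1$ on the left, or uses Formula (\ref{form1}) or (\ref{form3}) from Lemma \ref{lem-form} to pull the leftmost $X_1$ (hidden inside, after rewriting) past $g_1$; more directly, one recognizes $g_1^{-1}X_1^at_1^bg_1=W^{(2)}_{1,a,b}$ exactly, so the expression is $W^{(2)}_{1,a,b}\cdot X_1^{a'}t_1^{b'}g_1$. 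Then $X_1^{a'}t_1^{b'}g_1=W^{(2)}_{0,a',b'}$, so we need $\tr_1\bigl(\tr_2(W^{(2)}_{1,a,b}W^{(2)}_{0,a',b'})\bigr)$. Using the multiplication formulas (\ref{mult-t})--(\ref{mult-X}) to move the generators composing $W^{(2)}_{1,a,b}$ (namely $g_1^{-1}, X_1, t_1$) past $W^{(2)}_{0,a',b'}$, or — more efficiently — using the already-established property (\ref{2}) (proved just before this lemma in the text) with $u=g_1^{-1}X_1^a t_1^b g_1$... but $u\notin{\rm Y}(d,m,1)$. So instead I expand $W^{(2)}_{1,a,b}=g_1^{-1}X_1^a t_1^b g_1$ and use (\ref{mult-X2}), (\ref{mult-t}), (\ref{mult-g}) with $J=0$: e.g. $X_1\cdot W^{(2)}_{0,a',b'}=W^{(2)}_{0,a'+1,b'}$, $t_1\cdot W^{(2)}_{0,a',b'}=W^{(2)}_{0,a',b'+1}$ (from the $l=J+1$ case, $J=0$), and $g_1^{-1}\cdot W^{(2)}_{0,a',b'}$ via (\ref{mult-g}) with $i=1=J+1$. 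This produces $W^{(2)}_{1,a+a',b+b'}$ plus $e_1$-correction terms of the form $W^{(2)}_{0,\ast,\ast}\cdot t_2^s(\ldots)$; applying $\tr_2$ then $\tr_1$, the main term gives $x_{a+a',b+b'}$ and then... no, wait: $W^{(2)}_{1,\ast,\ast}$ under $\tr_2$ gives $x_{\ast,\ast}\cdot 1$, not $z\cdot x$. So I should re-examine: the middle expression has an extra trailing $g_1$ compared to $X_1^at_1^bg_1X_1^{a'}t_1^{b'}$, which is exactly what produces the factor $z$. Indeed $g_1^{-1}X_1^at_1^bg_1X_1^{a'}t_1^{b'}g_1$, after moving things around, should reduce to $W^{(2)}_{0,a+a',b+b'}$ (a single trailing $g_1$, $J=0$) plus corrections, giving $z\,x_{a+a',b+b'}$ under $\tr_1\circ\tr_2$. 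The main obstacle will be bookkeeping the $e_1$-correction terms and verifying they vanish after $\tr_1\circ\tr_2$: this requires using that $\tr_1(t_1^s u)=\tr_1(u t_1^s)$ together with $e_1$-symmetry, i.e. the content of Lemmas \ref{lemm-RT1} and \ref{lemm-RT2}, and careful tracking of which powers of $q$ and which $t$-powers appear. I expect the correction terms to organize into the difference appearing in (\ref{form1})/(\ref{form3}), whose trace vanishes by the $t$-averaging symmetry — that cancellation is the crux of the argument.
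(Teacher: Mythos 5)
Your plan has the right architecture --- isolate a ``main term'' whose image under $\tr_1\circ\tr_2$ is $z\,x_{a+a',b+b'}$ and argue that the $e_1$-correction terms die under the composed trace --- and you correctly locate the crux in that cancellation. But the crux is exactly what you leave unproven, and it is not automatic: the cancellation depends on the corrections having a specific \emph{antisymmetric} shape. The paper's proof first establishes, by induction on $a$ starting from Formula (\ref{form1}) with $b=0$, the identity
\[
X_1^ag_1 X_1^{a'}g_1= g_1 X_1^{a'}g_1X_1^a + (q-q^{-1})\,e_1\sum_{i=1}^a\bigl(X_1^ig_1X_1^{a+a'-i} - X_1^{a+a'-i}g_1X_1^i\bigr),
\]
so that after multiplying by $g_1^{-1}$, conjugating the $t$'s to the outside, writing each $e_1\,g_1^{-1}X_1^cg_1X_1^{c'}$ as $\frac{1}{d}\sum_s g_1^{-1}X_1^ct_1^sg_1\,X_1^{c'}t_1^{-s}$ and applying $\tr_2$ then $\tr_1$, the corrections contribute $\sum_{s}(x_{i,s}\,x_{a+a'-i,b+b'-s}-x_{a+a'-i,s}\,x_{i,b+b'-s})$ for each $i$, which vanishes by reindexing $s\mapsto b+b'-s$ and using the periodicity $x_{i,j+d}=x_{i,j}$. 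Your proposal never derives the displayed identity (nor any substitute for it), so the ``expected'' cancellation remains an assertion; saying that the corrections ``organize into the difference appearing in (\ref{form1})/(\ref{form3})'' is precisely the statement that needs proof, not a justification of it.

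A second, smaller point: the middle expression needs none of this machinery. Since $X_1^at_1^b$ and $X_1^{a'}t_1^{b'}$ both lie in ${\rm Y}(d,m,1)$, the already-proved property (\ref{2}) gives directly $\tr_2(X_1^a t_1^b\, g_1\, X_1^{a'} t_1^{b'})=X_1^at_1^b\,\tr_2(g_1)\,X_1^{a'}t_1^{b'}=z\,X_1^{a+a'}t_1^{b+b'}$, whence applying $\tr_1$ yields $z\,x_{a+a',b+b'}$ with no correction terms at all; your basis-rewriting detour there is unnecessary and, as written, again ends in an unverified ``should collapse''.
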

\begin{proof}[Proof of Lemma \ref{lemm-RT3}] 
The second equality follows directly from (\ref{RT1}) and (\ref{RT2}), since 
\begin{center}
$\tr_2(X_1^a t_1^b g_1 X_1^{a'} t_1^{b'})=z\,X_1^{a+a'} t_1^{b+b'}$\quad and \quad$\tr_1(X_1^{a+a'} t_1^{b+b'})=x_{a+a',b+b'}\,$.
\end{center}
For the first equality, we start with
\[X_1^ag_1 X_1^{a'}g_1= g_1 X_1^{a'}g_1X_1^a + (q-q^{-1})e_1\sum_{i=1}^a(X_1^ig_1X_1^{a+a'-i} - X_1^{a+a'-i}g_1X_1^i).\]
This formula is easily proved by induction on $a$, the basis of induction for $a=1$ being Formula (\ref{form1}) with $b=0$.
Thus, using the fact that $g_1^{-1}$ commutes with $e_1$, we have
\[\begin{array}{rcl}
g_1^{-1} X_1^a t_1^b g_1 X_1^{a'} t_1^{b'}g_1 & = &t_1^{b'}g_1^{-1} X_1^ag_1 X_1^{a'}g_1t_1^b \\[0.0em] & &\\[-1.2em]
 & = & t_1^{b'}\Bigl(X_1^{a'}g_1X_1^a + (q-q^{-1})e_1\displaystyle\sum_{i=1}^a(g_1^{-1}X_1^ig_1X_1^{a+a'-i} - g_1^{-1}X_1^{a+a'-i}g_1X_1^i)\Bigr)t_1^{b}.
 \end{array}\]
 Writing each term of the form $e_1\,g_1^{-1}X_1^cg_1X_1^{c'}$ as $\displaystyle\frac{1}{d}\sum_{s=1}^{d}g_1^{-1}X_1^ct_1^sg_1\,X_1^{c'}t_1^{-s}$, we apply $\tr_2$ and we use (\ref{2}) together with (\ref{RT1})--(\ref{RT2}) to obtain
 \[\tr_2(g_1^{-1} X_1^a t_1^b g_1 X_1^{a'} t_1^{b'}g_1)=t_1^{b'}\Bigl(z\,X_1^{a+a'}+(q-q^{-1})\displaystyle\frac{1}{d}\sum_{i=1}^a\sum_{s=1}^{d}(x_{i,s}\,X_1^{a+a'-i}t_1^{-s}-x_{a+a'-i,s}\,X_1^{i}t_1^{-s})\Bigr)t_1^{b}.\]
 Now we note that, for each $i\in\{1,\ldots,a\}$, we have
 \[\tr_1\Bigl(t_1^{b'}\sum_{s=1}^{d}(x_{i,s}\,X_1^{a+a'-i}t_1^{-s}-x_{a+a'-i,s}\,X_1^{i}t_1^{-s})t_1^{b}\Bigr)=\sum_{s=1}^{d}(x_{i,s}\,x_{a+a'-i,b+b'-s}-x_{a+a'-i,s}\,x_{i,b+b'-s}),\]
 which is equal to $0$ since 
\begin{center}
 $\displaystyle\sum_{s=1}^{d}x_{a+a'-i,s}\,x_{i,b+b'-s}=\hspace{-0.4cm}\sum_{s'=b+b'-d}^{b+b'-1}\hspace{-0.4cm}x_{a+a'-i,b+b'-s'}\,x_{i,s'}$\quad and \quad$x_{i,j+d}=x_{i,j}$  \quad for any $i,j\in\Z$. 
 \end{center}
 We conclude that
\[\tr_1\bigl(\tr_2(g_1^{-1} X_1^a t_1^b g_1 X_1^{a'} t_1^{b'}g_1)\bigr)=\tr_1\Bigl(t_1^{b'}(z\,X_1^{a+a'})t_1^{b}\Bigr)=z\,x_{a+a',b+b'} ,\]
which completes the verification of Formula (\ref{eq-lemm-RT3}).
\end{proof}

We return to the 
proof of (\ref{4}) with $Z=W^{(k)}_{k-1,a,b}\,w$, where $k\geq2$, $a\in E_m$, $b\in\{0,\dots,d-1\}$ and $w\in{\rm Y}(d,m,k-1)$. 
It is enough to take $w$ to be an element of the basis (\ref{base}) of ${\rm Y}(d,m,k-1)$. So let $Z=W^{(k)}_{k-1,a,b}W^{(k-1)}_{J',a',b'} w'$, where $J'\in\{0,\ldots,k-2\}$, 
$a' \in E_m$, $b' \in \{0,\ldots,d-1\}$ 
and $w' \in {\rm Y}(d,m,k-2)$.
As $g_{k-1} W^{(k)}_{k-1,a,b}=W^{(k)}_{k-2,a,b}$, we have
\[\begin{array}{rcl}
\tr_k(g_{k-1}Z) & = & z\, W^{(k-1)}_{k-2,a,b}W^{(k-1)}_{J',a',b'} w'\\ & &\\
 & =&z\,g_{k-2}^{-1}\ldots g_2^{-1}g_1^{-1} X_1^at_1^bg_1g_2\ldots g_{k-2} g_{J'}^{-1}\ldots g_2^{-1}g_1^{-1} X_1^{a'}t_1^{b'}g_1g_2\ldots g_{k-2}w'.
 \end{array}\]
Hence, if $J'=k-2$, we have
\begin{equation}\label{eq-proof3}
\tr_k(g_{k-1}Z)=z\,W^{(k-1)}_{k-2,a+a',b+b'}w'.
\end{equation}
If now $J'<k-2$, we use the fact that (see (\ref{mult-g}))
\begin{center}
$W^{(k-1)}_{k-2,a,b}\,g_i=g_i\,W^{(k-1)}_{k-2,a,b}$ \quad for $i<k-2$  
\end{center}
to move $g_{J'}^{-1}\ldots g_2^{-1}g_1^{-1}$ to the left, and that 
\begin{center}
$g_i\, W^{(k-1)}_{0,a',b'}=W^{(k-1)}_{0,a',b'}\,g_{i-1}$ \quad for $1<i<k-1$ 
\end{center}
to move $g_2\ldots g_{k-2}$ to the right. We obtain that, if $J'<k-2$, $\tr_k(g_{k-1}Z)$ is equal to
\begin{equation}\label{eq-proof4}
z\,g_{J'}^{-1}\ldots g_2^{-1}g_1^{-1}\cdot g_{k-2}^{-1}\ldots g_2^{-1}g_1^{-1} X_1^at_1^b\,g_1\,X_1^{a'}t_1^{b'}g_1g_2\ldots g_{k-2}\cdot g_1\ldots g_{k-3 } w'.
\end{equation}
Now we apply $\tr_{k-1}$ to (\ref{eq-proof3}) and (\ref{eq-proof4}) and we use (\ref{2}) and (\ref{3}) to find that the left-hand side of (\ref{4}) 
is equal to
\begin{equation}\label{eq-proof5}
 \tr_{k-1}\bigl(\tr_k(g_{k-1}Z)\bigr) = 
\left\{\begin{array}{ll}
z\,x_{a+a',b+b'}\,w' & \text{if $J'=k-2$\,,}\\ &\\
z\,g_{J'}^{-1}\ldots g_2^{-1}g_1^{-1}\cdot\tr_2\bigl(g_1^{-1} X_1^at_1^b\,g_1\,X_1^{a'}t_1^{b'}g_1\bigr)\cdot g_1\ldots g_{k-3 }w' & \text{if $J'<k-2$\,.}
\end{array}\right.
\end{equation}

For the right-hand side of (\ref{4}), note first that $g_{k-1}$ commutes with $w'$ and thus, 
\[Zg_{k-1}=g_{k-1}^{-1}\ldots g_2^{-1}g_1^{-1} X_1^at_1^bg_1g_2\ldots g_{k-1} g_{J'}^{-1}\ldots g_2^{-1}g_1^{-1} X_1^{a'}t_1^{b'}g_1g_2\ldots g_{k-1}\,w'=W^{(k)}_{k-1,a,b}W^{(k)}_{J',a',b'} w'.\] 
Then we use  the fact that (see (\ref{mult-g}))
\begin{center}
 $W^{(k)}_{k-1,a,b}\,g_i=g_i\,W^{(k)}_{k-1,a,b}$ \quad for $i<k-1$ 
 \end{center}
 to move $g_{J'}^{-1}\ldots g_2^{-1}g_1^{-1}$ to the left, and that 
 \begin{center}
 $g_i \,W^{(k)}_{0,a',b'}=W^{(k)}_{0,a',b'}\,g_{i-1}$\quad  for $1<i<k$
 \end{center}
 to move $g_2\ldots g_{k-1}$ to the right. We obtain
\[Zg_{k-1}=g_{J'}^{-1}\ldots g_2^{-1}g_1^{-1}\cdot g_{k-1}^{-1}\ldots g_2^{-1}g_1^{-1} X_1^at_1^b\,g_1\, X_1^{a'}t_1^{b'}g_1g_2\ldots g_{k-1}\cdot g_1g_2\ldots g_{k-2}\,w'.\] 
Now we apply $\tr_k$ and use first (\ref{2}) and then (\ref{3}) $k-2$ times. This yields:
\[\tr_k(Zg_{k-1})=g_{J'}^{-1}\ldots g_2^{-1}g_1^{-1}\cdot\tr_2(g_1^{-1} X_1^at_1^b\,g_1\, X_1^{a'}t_1^{b'}g_1)\cdot g_1\ldots g_{k-2}\,w'.\] 
Finally, we apply $\tr_{k-1}$ and we use (\ref{2}) and (\ref{3}) (note that $\tr_2(g_1^{-1} X_1^at_1^b\,g_1\, X_1^{a'}t_1^{b'}g_1)\in{\rm Y}(d,m,1)$). We obtain that the right hand side of (\ref{4}) is equal to
\begin{equation}\label{eq-proof6}
\tr_{k-1}\bigl(\tr_k(Zg_{k-1})\bigr)=
\left\{\begin{array}{ll}
\tr_1\bigl(\tr_2(g_1^{-1} X_1^at_1^b\,g_1\, X_1^{a'}t_1^{b'}g_1)\bigr)\,w' & \text{if $J'=k-2$\,,}\\ & \\
z\,g_{J'}^{-1}\ldots g_2^{-1}g_1^{-1}\cdot\tr_2\bigl(g_1^{-1} X_1^at_1^b\,g_1\,X_1^{a'}t_1^{b'}g_1\bigr)\cdot g_1\ldots g_{k-3 }w' & \text{if $J'<k-2$\,.}
\end{array}\right.
\end{equation}
The comparison of (\ref{eq-proof5}) and (\ref{eq-proof6}) using 
Lemma \ref{lemm-RT3} concludes the verification of (\ref{4}), and in turn the proof of Proposition \ref{prop-RT}.
\end{proof}

\subsection{Markov traces on ${\rm Y}(d,m,n)$}\label{subsec-Markov}

Let $n>0$. We define inductively elements $\tX_1,\tX_2,\ldots,\tX_n$ of ${\rm Y}(d,m,n)$ by 
\[\tX_1:=X_1\ \ \ \text{and}\ \ \ \tX_{i+1}:=g_i^{-1}\tX_ig_i\ \ \text{for $i=1,\ldots,n-1$.}\]
Note that $\tX_i^a =W^{(i)}_{i-1,a,0}$ for any $i\in\{1,\ldots,n\}$ and 
$a\in\Z$. 
Hence, we  have $t_i^b\tX_i^a=W^{(i)}_{i-1,a,b} = \tX_i^a t_i^b$ for any  $b\in\Z$.

We recall that the algebra ${\rm Y}(d,m,k)$ for $k\leq n$ is identified with the subalgebra of ${\rm Y}(d,m,n)$ generated by $g_1,\ldots,g_{k-1},t_1,\ldots,t_k,X_1^{\pm1}$.

\begin{defn}\label{def-Markov}
{\rm Let $z$ and $x_{a,b}$, with $a\in E_m$ and $b\in \{0,\ldots,d-1\}$, be parameters in $\cR_{m}$ such that $x_{0,0}=1$.
A \emph{Markov trace with parameters $z$ and $x_{a,b}$} on the algebra ${\rm Y}(d,m,n)$ is an $\cR_m$-linear map $\tr\,:\,{\rm Y}(d,m,n)\to\cR_m$ satisfying the following conditions:
\begin{eqnarray}
\label{Markov1}&& \tr(YZ)=\tr(ZY)\ \qquad \text{for any $Y,Z\in{\rm Y}(d,m,n)$\,,}\\[0.5em]
\label{Markov2}& & \tr(g_{k-1}\,u)=z\,\tr(u) \ \qquad \text{for any $k\in\{2,\ldots,n\}$ and $u\in{\rm Y}(d,m,k-1)$\,,}\\[0.5em]
\label{Markov3}& & \tr(\tX_k^a t_k^b\,u)=x_{a,b}\,\tr(u)\ \qquad \text{for any $k\in\{1,\ldots,n\}$ and $u\in{\rm Y}(d,m,k-1)$\,,}
\end{eqnarray}
where $a\in E_m$ and $b\in \{0,\ldots,d-1\}$.}
\end{defn}

Let $\{\tr_k\}_{k \in \Z_{>0}}$ be the unique chain of relative traces with parameters $z$ and $x_{a,b}$, with $x_{0,0}=1$, given by Proposition \ref{prop-RT}. We define a linear map $\tau$ from ${\rm Y}(d,m,n)$ to $\cR_m$ by
$$\tau:= \tr_1 \circ \tr_2 \circ \cdots \circ\tr_{n-1} \circ \tr_n\,.$$
We note that, from (\ref{rem-tr}) together with the fact that $x_{0,0}=1$, we have, for any $k\leq n$ and $u\in{\rm Y}(d,m,k)$,
\begin{equation}\label{tau-chaine}
\tau(u)=\tr_1 \circ \tr_2 \circ \cdots \circ\tr_{n-1} \circ \tr_n(u)=\tr_1 \circ \tr_2 \circ \cdots  \circ \tr_k(u)\,.
\end{equation}
Then it follows immediately from (\ref{RT1}) and (\ref{RT2}) that the action of $\tau$ on the elements of the basis (\ref{base}) of ${\rm Y}(d,m,n)$ is given by the following initial condition and recursive formula:
\begin{equation}\label{form-tau}
\tau(1)=1\ \ \ \text{and}\ \ \ \tau\bigl(W^{(k)}_{J,a,b}\,w\bigr)= \left\{\begin{array}{ll}
z\,\tau\bigl(W^{(k-1)}_{J,a,b}\,w\bigr) & \text{if $0 \leq J<k-1$,}\\ & \\
x_{a,b}\,\tau(w) & \text{if $J=k-1$,}
\end{array}\right.
\end{equation}
where $k=1,\ldots,n$, $J\in\{0,\ldots,k-1\}$, $a \in E_m$, $b\in\{0,\ldots,d-1\}$ and $w\in{\rm Y}(d,m,k-1)$.

 \begin{prop}\label{prop-tau}
 The map $\tau$ is the unique Markov trace with parameters $z$ and $x_{a,b}$ on the algebra ${\rm Y}(d,m,n)$.
 \end{prop}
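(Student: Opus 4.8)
The plan is to establish the two halves of the statement separately: first that a Markov trace with parameters $z,x_{a,b}$ is uniquely determined (hence there is at most one), and then that the explicit map $\tau$ satisfies the three conditions of Definition~\ref{def-Markov}.

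For uniqueness I would show that the axioms force the recursion (\ref{form-tau}) on the basis of ${\rm Y}(d,m,n)$ obtained (via Theorem~\ref{theo-bases}) by iterating (\ref{base}): a basis element of ${\rm Y}(d,m,k)$ has the shape $W^{(k)}_{J,a,b}\,w$ with $w\in{\rm Y}(d,m,k-1)$. If $J=k-1$, then $W^{(k)}_{k-1,a,b}=\tX_k^at_k^b$, so (\ref{Markov3}) gives $\tr\bigl(W^{(k)}_{k-1,a,b}w\bigr)=x_{a,b}\tr(w)$. If $J<k-1$, then $W^{(k)}_{J,a,b}=W^{(k-1)}_{J,a,b}\,g_{k-1}$, and since $w\,W^{(k-1)}_{J,a,b}\in{\rm Y}(d,m,k-1)$, the trace property (\ref{Markov1}) followed by (\ref{Markov2}) gives
\[\tr\bigl(W^{(k)}_{J,a,b}w\bigr)=\tr\bigl(g_{k-1}\,w\,W^{(k-1)}_{J,a,b}\bigr)=z\,\tr\bigl(w\,W^{(k-1)}_{J,a,b}\bigr)=z\,\tr\bigl(W^{(k-1)}_{J,a,b}w\bigr).\]
Together with the normalisation $\tr(1)=1$ (built into the definition via $x_{0,0}=1$), these are precisely the rules (\ref{form-tau}) obeyed by $\tau$, so an easy induction on $n$ yields $\tr=\tau$.

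For existence, conditions (\ref{Markov2}) and (\ref{Markov3}) are short consequences of the relative-trace machinery and (\ref{tau-chaine}). For (\ref{Markov3}): $\tX_k^at_k^b=W^{(k)}_{k-1,a,b}$, so (\ref{RT2}) gives $\tr_k(\tX_k^at_k^b\,u)=x_{a,b}u$ for $u\in{\rm Y}(d,m,k-1)$; applying $\tr_1\circ\cdots\circ\tr_{k-1}$ and using (\ref{tau-chaine}) for both $\tX_k^at_k^b u\in{\rm Y}(d,m,k)$ and $u\in{\rm Y}(d,m,k-1)$ gives $\tau(\tX_k^at_k^b\,u)=x_{a,b}\tau(u)$. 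For (\ref{Markov2}): property (\ref{2}) together with $\tr_k(g_{k-1})=z$ gives $\tr_k(g_{k-1}u)=zu$ for $u\in{\rm Y}(d,m,k-1)$ and $k\geq2$, and the same reduction gives $\tau(g_{k-1}u)=z\tau(u)$.

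The cyclicity (\ref{Markov1}) is the substantial point, and I expect it to be the main obstacle, although it too reduces cleanly to the relative-trace axioms. I would prove it by induction on $n$, the base $n=1$ being trivial since ${\rm Y}(d,m,1)$ is commutative. For the step, write $\tau=(\tr_1\circ\cdots\circ\tr_{n-1})\circ\tr_n$; since products of generators span ${\rm Y}(d,m,n)$, it suffices, by a second induction on the word length of $Y$, to verify $\tau(xZ)=\tau(Zx)$ for every generator $x$ and every $Z\in{\rm Y}(d,m,n)$. When $x\in{\rm Y}(d,m,n-1)$, property (\ref{2}) gives $\tr_n(xZ)=x\,\tr_n(Z)$ and $\tr_n(Zx)=\tr_n(Z)\,x$, and the inductive hypothesis for ${\rm Y}(d,m,n-1)$ applied to $x$ and $\tr_n(Z)$ finishes it. When $x=t_n$, Lemma~\ref{lemm-RT1} with $k=n$ gives $\tr_n(t_nZ)=\tr_n(Zt_n)$, hence $\tau(t_nZ)=\tau(Zt_n)$. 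When $x=g_{n-1}$, condition (\ref{4}) with $k=n$ states exactly $\tr_{n-1}\bigl(\tr_n(g_{n-1}Z)\bigr)=\tr_{n-1}\bigl(\tr_n(Zg_{n-1})\bigr)$, and applying $\tr_1\circ\cdots\circ\tr_{n-2}$ gives $\tau(g_{n-1}Z)=\tau(Zg_{n-1})$. Thus condition (\ref{4}) — the most delicate axiom of a chain of relative traces, whose verification in Proposition~\ref{prop-RT} rested on Lemma~\ref{lemm-RT3} — is precisely what powers the cyclicity, and assembling the two nested inductions then yields (\ref{Markov1}) on all of ${\rm Y}(d,m,n)$, completing the proof that $\tau$ is the unique Markov trace with the prescribed parameters.
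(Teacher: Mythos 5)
Your proposal is correct and follows essentially the same route as the paper: uniqueness by forcing the recursion (\ref{form-tau}) on the basis (\ref{base}) via (\ref{Markov1})--(\ref{Markov3}), and existence by reading (\ref{Markov2})--(\ref{Markov3}) off the relative-trace formulas and proving cyclicity by induction on $n$, splitting into the cases $x\in{\rm Y}(d,m,n-1)$ (property (\ref{2}) plus the inductive hypothesis), $x=t_n$ (Lemma \ref{lemm-RT1}) and $x=g_{n-1}$ (condition (\ref{4})). This is exactly the paper's argument.
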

\begin{proof}
First assume that $\tr$ is a Markov trace with parameters $z$ and $x_{a,b}$ on the algebra ${\rm Y}(d,m,n)$. Then $\tr(1)=x_{0,0}=1$. Let $k\in\{1,\ldots,n\}$, $a \in E_m$, $b\in\{0,\ldots,d-1\}$ and $w\in{\rm Y}(d,m,k-1)$. The condition (\ref{Markov3}) is
\[\tr\bigl(W^{(k)}_{k-1,a,b}\,w\bigr)=x_{a,b}\,\tr(w).\]
Let $J\in\{0,\ldots,k-2\}$. Then we have $W^{(k)}_{J,a,b}=W^{(k-1)}_{J,a,b}g_{k-1}$ and so
 \[\tr\bigl(W^{(k)}_{J,a,b}\,w\bigr)=\tr(W^{(k-1)}_{J,a,b}g_{k-1}\,w)=\tr(g_{k-1}\,w\,W^{(k-1)}_{J,a,b})=z\,\tr(w\,W^{(k-1)}_{J,a,b})=z\,\tr(W^{(k-1)}_{J,a,b}\,w),\]
where we have used successively (\ref{Markov1}), (\ref{Markov2}) and (\ref{Markov1}) again. So, if it exists, the Markov trace $\tr$ coincides with the linear map $\tau$ and thus is uniquely defined.
 
It remains to show that the linear map $\tau$ satisfies Conditions (\ref{Markov1})--(\ref{Markov3}). Conditions (\ref{Markov2}) and (\ref{Markov3}) are contained in (\ref{form-tau}), so we only need to prove that $\tau$ is a trace function.

We will proceed by induction on $n$. The algebra ${\rm Y}(d,m,1)$ is commutative so if $Y,Z\in{\rm Y}(d,m,1)$, there is nothing to prove. Now let $1<k\leq n$ and assume that $\tau(YZ)=\tau(ZY)$ for any $Y,Z\in{\rm Y}(d,m,k-1)$. We will show that $\tau(YZ)=\tau(ZY)$ for any $Y,Z\in{\rm Y}(d,m,k)$.

It is enough to take $Y$ to be a generator of the algebra ${\rm Y}(d,m,k)$ and $Z=W^{(k)}_{J,a,b} w$ be an element of the basis (\ref{base}) of ${\rm Y}(d,m,k)$. If $Y\in{\rm Y}(d,m,k-1)$, then by (\ref{2}), (\ref{tau-chaine}) and the induction hypothesis, we have
$$ \tau(Y Z) = \tau(Y \tr_k(Z) ) = \tau(\tr_k(Z) Y) = \tau(\tr_k(Z Y))=\tau(Z Y).$$
So it remains to take $Y\in\{g_{k-1},t_k\}$. By (\ref{4}) if $Y=g_{k-1}$, and by Lemma \ref{lemm-RT1} if $Y=t_k$, we have
$$\tau(Y Z) =\tau(Z Y),$$
which concludes the verification of Condition (\ref{Markov1}) for $\tau$.
\end{proof}

\begin{rem}{\rm
The integer $n$ is absent from the notation $\tr$ for the Markov trace on ${\rm Y}(d,m,n)$ of Definition \ref{def-Markov}. This is justified by the following fact. Denote, just for this remark, the Markov trace on ${\rm Y}(d,m,n)$ by $\tr^{(n)}$. An element $u\in{\rm Y}(d,m,n)$ can be seen, by the chain property (\ref{chainproperty}) of ${\rm Y}(d,m,n)$, as an element of ${\rm Y}(d,m,n')$ for any $n'\geq n$. Then, due to Proposition \ref{prop-tau} and Formula (\ref{tau-chaine}), we have
\[\tr^{(n')}(u)=\tr^{(n)}(u)\ \ \ \ \ \text{for any $n'\geq n$.}\]
Thus the Markov trace $\tr$ of Definition \ref{def-Markov} can actually be interpreted as a Markov trace on the whole chain, on $n$, of algebras 
${\rm Y}(d,m,n)$. \hfill $\triangle$}\end{rem}

\begin{rem}{\rm For $d=1$, the Markov trace $\tr$ on the Ariki--Koike algebra ${\rm H}(m,n)$ was introduced 
\begin{itemize}
\item by Ocneanu/Jones \cite{Jo} for $m=1$, \smallbreak
\item by  Lambropoulou and Geck \cite{La1, Gela} for $m=2$, and \smallbreak 
\item by Lambropoulou \cite{La2} for $m \geq 3$. 
\end{itemize} 
For $m=1$, $\tr$ is the Markov trace on the Yokonuma--Hecke algebra ${\rm Y}_m(d,n)$ defined by Juyumaya \cite{ju2}.
\hfill $\triangle$}
\end{rem}

\begin{rem}{\rm
As we will see in the next section, only Conditions (\ref{Markov1}) and (\ref{Markov2}) are necessary for obtaining invariants for framed knots and links. The additional Condition (\ref{Markov3}) allowed us to describe explicitly the Markov trace, and in turn prove its existence and uniqueness. It is an open question to describe all linear maps from ${\rm Y}(d,m,n)$ to $\cR_m$ satisfying only (\ref{Markov1}) and (\ref{Markov2}), already for the (non-framed) affine case $d=1$ and for the (non-affine) framed case $m=1$.

For example, let us take $m=1$ and $n=2$, that is, we consider the Yokonuma--Hecke algebra ${\rm Y}(d,1,2)$. It is easy to check that all linear maps from ${\rm Y}(d,1,2)$ to $\cR_m$ satisfying (\ref{Markov1}) and (\ref{Markov2}) are given on the basis elements $W^{(2)}_{J,0,b}\,t_1^{b'}$, where $J \in \{0,1\}$ and $b,b' \in \{0,\ldots,d-1\}$, by:
\[\tr(t_1^{b}g_1t_1^{b'})=x_{b+b'\,\text{mod}(d)}\ \ \ \ \ \text{and}\ \ \ \ \ \tr(g_1^{-1}t_1^{b}g_1t_1^{b'})=y_{b,b'},\]
where the parameters $x_a,\, y_{b,b'}\in\cR_m$, with $a,b,b'\in\{0,\ldots,d-1\}$, satisfy 
\[x_a=z\,y_{0,a} \ \ \ \ \ \text{and}\ \ \ \ \  y_{b,b'}=y_{b',b}.\] 
Such a linear map satisfies (\ref{Markov3}) if and only if we have in addition $y_{b,b'}=y_{0,b}\,y_{0,b'}$. 
\hfill $\triangle$}\end{rem}

\section{Invariants for framed knots and links in the solid torus}\label{sec-inv}
As stated in the begining of this paper, the affine and cyclotomic Yokonuma--Hecke algebras can be seen as quotients of the modular framed affine braid group algebra over $\cR_m$. 
In this section, we will see that each framed link in the solid torus can be represented by an element of the modular framed affine braid group. We will then use the Markov trace on ${\rm Y}(d,m,n)$ constructed in the previous section to define invariants for framed knots and links in the solid torus. Our approach will be a generalisation of the approach used by Lambropoulou and Geck \cite{La1, Gela, La2} for (non-framed) knots and links in the solid torus 
 (case $d=1$), as well as the approach used by Juyumaya and Lambropoulou \cite{jula2} for (usual) framed knots and links (case $m=1$).

\subsection{Modular framed affine braids}
Let $n \in \Z_{>0}$. We denote by $B_n^{\mathrm{aff}}$ the affine braid group with a presentation given by: 
\begin{itemize}
\item generators: $\sigma_0, \sigma_1, \ldots, \sigma_{n-1}$, \smallbreak
\item and relations: \begin{equation}\label{affinebraid}
\begin{array}{rclcl}
\sigma_0\sigma_1\sigma_0\sigma_1 & = & \sigma_1\sigma_0\sigma_1\sigma_0 && \\[0.1em]
\sigma_i\sigma_j & = & \sigma_j\sigma_i && \mbox{for all $i,j=0,1,\ldots,n-1$ such that $\vert i-j\vert > 1$,}\\[0.1em]
\sigma_i\sigma_{i+1}\sigma_i & = & \sigma_{i+1}\sigma_i\sigma_{i+1} && \mbox{for  all $i=1,\ldots,n-2$.}\\[0.1em]
\end{array}\end{equation}
\end{itemize}

The usual braid group $B_n$ on $n$ strands is isomorphic to the subgroup of $B_n^{\mathrm{aff}}$ generated by $ \sigma_1, \ldots, \sigma_{n-1}$, and also to the quotient of $B_n^{\mathrm{aff}}$ over the relation $\sigma_0=1$. We keep the notation $\sigma_1, \ldots, \sigma_{n-1}$ for the images of these elements in the quotient, and we denote by $\pi_0$ the surjective homomorphism from $B_n^{\mathrm{aff}}$ to $B_n$ given by
\[\sigma_0\mapsto 1\ \ \ \ \ \text{and}\ \ \ \ \ B_n^{\mathrm{aff}}\ni\sigma_i\mapsto\sigma_i\in B_n\ \ \ \text{for any $i=1,\ldots,n-1$.}\]
Let $\alpha = \sigma_{i_1}^{b_{i_1}} \sigma_{i_2}^{b_{i_2}}\ldots \sigma_{i_r}^{b_{i_r}} \in B_n$, with
$i_1,i_2,\ldots,i_r \in \{1,\ldots,n-1\}$ and $b_{i_1}, b_{i_2},\ldots, b_{i_r} \in \Z$.
We will denote by $\epsilon(\alpha)$ the sum of the exponents of $\alpha$, that is,
\begin{equation}\label{sumofexp}
\epsilon(\alpha):= b_{i_1} + b_{i_2} + \cdots +b_{i_r}\,.
\end{equation}
We extend the definition of the map $\epsilon$ to the affine braid group $B_n^{\mathrm{aff}}$ as follows:
\begin{equation}\label{sumofexp2}
\epsilon(\beta):= \epsilon\bigl(\pi_0(\beta)\bigr)\ \ \ \ \ \text{for any $\beta\in B_n^{\mathrm{aff}}$\,.}
\end{equation}
The defining relations (\ref{affinebraid}) are homogeneous in the generators $\sigma_0, \sigma_1, \ldots, \sigma_{n-1}$, so 
$\epsilon(\beta)$ does not depend on the chosen word for $\beta$ in terms of the generators.
There is also a diagrammatic interpretation of $\beta$ in terms of $n$ braid strands (as in the classical braid group case) plus one extra fixed strand that extends to infinity (the ``pole''). The generators $\sigma_1,\ldots,\sigma_{n-1}$ appearing in $\beta$ correspond to the classical braid group movements, while every appearance of the generator $\sigma_0$ corresponds to a circling of the first strand around the pole (see, for example, \cite{La1, Gela, La2}). 
Moreover, if $\beta = \sigma_{i_1}^{b_{i_1}} \sigma_{i_2}^{b_{i_2}}\ldots \sigma_{i_r}^{b_{i_r}}$, with $i_1,i_2,\ldots,i_r \in \{0,1,\ldots,n-1\}$ and $b_{i_1}, b_{i_2},\ldots, b_{i_r} \in \Z$, we set
\begin{equation}\label{epsilon'}
\epsilon'(\beta):=b_{i_1} + b_{i_2} + \cdots +b_{i_r}.
\end{equation}

Further, we denote by $\pi$ be the natural surjective homomorphism from $B_n^{\mathrm{aff}}$ to the symmetric group $\mathfrak{S}_n$ on $n$ letters, given by 
\[\sigma_0\mapsto1\ \ \ \ \ \text{and}\ \ \ \ \ \sigma_i \mapsto s_i\ \ \ \text{for any $i=1,\ldots,n-1$,}\]
where $s_i$ is the transposition $(i,i+1)$. We set  $\overline{\beta}:=\pi(\beta)$ for any $\beta\in B_n^{\mathrm{aff}}$.

Let now $d \in \Z_{> 0}$ and consider the modular framed (or  $(\Z/d\Z)$-framed) affine braid group
$(\Z/d\Z)\wr B_n^{\mathrm{aff}}$. The group $(\Z/d\Z)\wr B_n^{\mathrm{aff}}$ has a presentation given by 
\begin{itemize}
\item generators: $\sigma_0, \sigma_1, \ldots, \sigma_{n-1}, t_1, t_2, \ldots, t_n$, \smallbreak
\item and relations (\ref{affinebraid}) together with:
\begin{equation}\label{framedaffinebraid}
\begin{array}{rclcl}
t_j^d   & =  &  1 && \mbox{for all $j=1,\ldots,n$,}\\[0.1em]
t_it_j & =  & t_jt_i &&  \mbox{for all $i,j=1,\ldots,n$,}\\[0.1em]
t_j\sigma_0 & = & \sigma_0t_j && \mbox{for all $j=1,\ldots,n$,}\\[0.1em]
t_j\sigma_i & = & \sigma_i t_{s_i(j)} && \mbox{for all  $j=1,\ldots,n$ and $i=1,\ldots,n-1$,}\\[0.1em]
\end{array}
\end{equation}
where   $s_i$ is the transposition $(i,i+1)$.
\end{itemize}
The group $(\Z/d\Z)\wr B_n^{\mathrm{aff}}$ is the semi-direct product of $(\Z/d\Z)^n$ with the group $B_n^{\mathrm{aff}}$, with the action of $B_n^{\mathrm{aff}}$ on $(\Z/d\Z)^n$ given by the composition of $\pi$ and the natural permutation action of $\mathfrak{S}_n$ on $(\Z/d\Z)^n$. We have that
\begin{equation}\label{conjug}
t_1^{a_1}\ldots t_n^{a_n}\,\beta=\beta\,t_1^{a_{\overline{\beta}(1)}}\ldots t_{n}^{a_{\overline{\beta}(n)}}\ \ \ \ \text{for any $a_1,\ldots,a_n\in\{0,1,\ldots,d-1\}$ and $\beta\in B_n^{\mathrm{aff}}$, }
\end{equation}
and any $\alpha\in(\Z/d\Z)\wr B_n^{\mathrm{aff}}$ can be written uniquely in the form
\[\alpha = t_1^{a_1}\ldots t_n^{a_n} \beta\ \ \ \ \text{where $a_1,\ldots,a_n\in\{0,1,\ldots,d-1\}$ and $\beta\in B_n^{\mathrm{aff}}$.}\]
We will refer to $t_1^{a_1}\ldots t_n^{a_n}$ as the ``framing part'' of $\alpha$ and  to $\beta$ as the  ``braiding part'' of $\alpha$. We set
\begin{equation}\label{exponent}
\epsilon(\alpha):=\epsilon(\beta),
\end{equation}
where $\epsilon(\beta)$ is defined in (\ref{sumofexp2}), and
\begin{equation}\label{exponent'}
\epsilon'(\alpha):=\epsilon'(\beta),
\end{equation}
where $\epsilon'(\beta)$ is defined in (\ref{epsilon'}).  
There is also a diagrammatic interpretation of $\alpha$ as follows: The diagram of $\alpha$ is  the diagram of  $\beta$ with the integer $a_i$ attached to the $i$-th strand, for $i=1,\ldots,n$. We will call $a_i$ 
the \emph{framing} of the $i$-th braid strand. 
Then, by construction, multiplication in $(\Z/d\Z)\wr B_n^{\mathrm{aff}}$ corresponds to concatenation of framed braid diagrams, that is, usual concatenation of braid diagrams together with addition (in $\Z/d\Z$) of the framings on each strand (see, for example, \cite{jula1}
for the non-affine framed situation).

\begin{rem}{\rm
We use the same notation $t_j$ for the generators of $(\Z/d\Z)^n$ inside $(\Z/d\Z)\wr B_n^{\mathrm{aff}}$ and for generators of ${\rm Y}(d,m,n)$; 
this should not lead to any confusion,  since the subalgebra of ${\rm Y}(d,m,n)$ generated by $t_1,\ldots,t_n$ is isomorphic to the group algebra of $(\Z/d\Z)^n$.
\hfill $\triangle$}\end{rem}

\begin{rem}{\rm
Note that we can interpret any affine braid as a  $(\Z/d\Z)$-framed affine braid with all framings equal to $0$.\hfill $\triangle$}
\end{rem}

\subsection{Modular framed solid torus links} 
From now on, we will simply say \emph{solid torus links} for the links in the solid torus. A \emph{framed solid torus link} is a solid torus link where each connected component 
has an integer attached to it. We will call this integer the \emph{framing} of the connected component.  A \emph{modular framed (or  $(\Z/d\Z)$-framed) solid torus link} is a framed solid torus link where all framings belong to 
$\Z/d\Z$.

The closure of an affine braid can be interpreted as a solid torus link \cite{La1}. 
For a $(\Z/d\Z)$-framed affine braid $\alpha\in(\Z/d\Z)\wr B_n^{\mathrm{aff}}$, we will denote by $\widehat{\alpha}$ its closure, which is the $(\Z/d\Z)$-framed solid torus link defined as follows:
We consider the closure $\widehat{\beta}$ of the braiding part $\beta$ of $\alpha$. Then $\widehat{\alpha}$ is obtained 
by attaching to each connected component of $\widehat{\beta}$ the sum 
(in $\Z/d\Z$)
of the framings of the strands forming this component after closure. Again, we can interpret a solid torus link as a  $(\Z/d\Z)$-framed solid torus link with all framings equal to $0$.

In \cite[Theorem 1]{La1}, the analogue of Alexander's theorem is proved for solid torus links; namely, any solid torus link can be obtained as the closure of an affine braid. Moreover, it is obvious that by adding a suitable framing on the affine braid, one can obtain any possible framing in the solid torus link. So the analogue of Alexander's theorem is also true in our setting.
\begin{thm}\label{Sofia1}
Any $(\Z/d\Z)$-framed solid torus link can be obtained as the closure of a $(\Z/d\Z)$-framed affine braid.
\end{thm}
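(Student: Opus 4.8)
The plan is to deduce the statement directly from the unframed analogue of Alexander's theorem, namely \cite[Theorem 1]{La1}, and then correct the framings by hand. So, given a $(\Z/d\Z)$-framed solid torus link $L$, I would first forget all the framings to obtain an ordinary solid torus link $L_0$. By \cite[Theorem 1]{La1}, there exist $n\in\Z_{>0}$ and an affine braid $\beta\in B_n^{\mathrm{aff}}$ whose closure $\widehat{\beta}$ is isotopic (in the solid torus) to $L_0$. Viewing $\beta$ as a $(\Z/d\Z)$-framed affine braid with all framings equal to $0$, its closure in the framed sense is $L_0$ equipped with the zero framing on every component.

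Next I would adjust the framings. Write $L$ as $L_0$ together with a prescribed framing $f_C\in\Z/d\Z$ attached to each connected component $C$ of $L_0$. Transporting the isotopy $\widehat{\beta}\cong L_0$, each component $C$ corresponds to a subset $S_C\subseteq\{1,\ldots,n\}$ of braid strands of $\beta$ that get joined together in the closure; these subsets partition $\{1,\ldots,n\}$ and are exactly the orbits of the underlying permutation $\overline{\beta}=\pi(\beta)$. For each $C$ I would pick a single index $i_C\in S_C$ and set $a_{i_C}:=f_C$ and $a_j:=0$ for all $j\in S_C\setminus\{i_C\}$, thereby defining $a_1,\ldots,a_n\in\{0,1,\ldots,d-1\}$. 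Then put $\alpha:=t_1^{a_1}\ldots t_n^{a_n}\,\beta\in(\Z/d\Z)\wr B_n^{\mathrm{aff}}$, whose braiding part is $\beta$ and whose framing part is $t_1^{a_1}\ldots t_n^{a_n}$.

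Finally I would verify that $\widehat{\alpha}$ realises $L$. By the definition of the closure of a $(\Z/d\Z)$-framed affine braid recalled above, the underlying solid torus link of $\widehat{\alpha}$ is $\widehat{\beta}\cong L_0$, and the framing attached to the component corresponding to $S_C$ is $\sum_{j\in S_C}a_j=f_C$ in $\Z/d\Z$. Hence $\widehat{\alpha}\cong L$ as $(\Z/d\Z)$-framed solid torus links, which is exactly the assertion. There is no genuine obstacle in this argument: all the topological substance is contained in \cite[Theorem 1]{La1}, which we are free to assume, and the only new point — the framing correction — is immediate, since the framing of a closed-up component depends solely on the sum of the framings of its strands, so it always suffices to concentrate the desired framing on one strand of each component.
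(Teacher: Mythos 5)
Your argument is correct and is essentially the paper's own proof: the paper likewise invokes \cite[Theorem 1]{La1} for the underlying solid torus link and then notes that a suitable choice of framings on the braid strands realises any prescribed framing on the components. You have merely spelled out explicitly the framing adjustment that the paper declares obvious, using the fact that the closed-up framing of a component is the sum of the framings of its strands.
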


\begin{rem} {\rm 
For any $n,n'\in\Z>0$ such that $n'>n$, we denote by $\iota_{n,n'}$ the group homomorphism from $(\Z/d\Z)\wr B_n^{\mathrm{aff}}$ to $(\Z/d\Z)\wr B_{n'}^{\mathrm{aff}}$ given by
$$
\iota_{n,n'}\ :\ \ (\Z/d\Z)\wr B_n^{\mathrm{aff}}\ni x\mapsto x\in (\Z/d\Z)\wr B_{n'}^{\mathrm{aff}}\ \ \ \ \ \ \ \ \text{for any $x\in\{\sigma_0, \sigma_1, \ldots, \sigma_{n-1}, t_1, t_2, \ldots, t_n\}$\,.}
$$
The homomorphism $\iota_{n,n'}$ is in fact an isomorphism between $(\Z/d\Z)\wr B_n^{\mathrm{aff}}$ and the subgroup of $(\Z/d\Z)\wr B_{n'}^{\mathrm{aff}}$ generated by $\sigma_0, \sigma_1, \ldots, \sigma_{n-1}, t_1, t_2, \ldots, t_n$. 
This allows us to consider the chain, on $n$, of  groups 
\begin{equation}\label{secondchain}
\{1\} \subset (\Z/d\Z)\wr B_1^{\mathrm{aff}} \subset\cdots\subset (\Z/d\Z)\wr B_{n-1}^{\mathrm{aff}}  \subset (\Z/d\Z)\wr B_n^{\mathrm{aff}} \subset\cdots.
\end{equation}
However, the closures  of $\alpha\in(\Z/d\Z)\wr B_n^{\mathrm{aff}}$ and $\iota_{n,n'}(\alpha)$ are different $(\Z/d\Z)$-framed solid torus links. This is why, in what follows, 
whenever we say that $\alpha\in(\Z/d\Z)\wr B_n^{\mathrm{aff}}$, we mean that $\alpha$ is expressed diagrammatically exactly on $n$ braid strands (plus the pole). 
We thus consider the whole union $\bigcup_{n \geq 1}(\Z/d\Z)\wr B_n^{\mathrm{aff}}$,
and we use the chain (\ref{secondchain}) to define multiplication between its elements.
\hfill $\triangle$}
\end{rem}

\begin{defn}{\rm Two $(\Z/d\Z)$-framed affine braids $\alpha, \alpha'\in\bigcup_{n \geq 1}(\Z/d\Z)\wr B_n^{\mathrm{aff}}$ are \emph{equivalent} if and only if there exists a finite sequence 
of $(\Z/d\Z)$-framed affine braids $\alpha_0,\alpha_1,\ldots,\alpha_r \in \bigcup_{n \geq 1}(\Z/d\Z)\wr B_n^{\mathrm{aff}}$ with $\alpha = \alpha_0$ and $\alpha' = \alpha_r$ such that, for all $i=1,\ldots,r$, one of the following holds:
\begin{enumerate}[(i)]
\item there exist $n \geq 1$ and $\gamma_i \in (\Z/d\Z)\wr B_n^{\mathrm{aff}}$ such that $\alpha_{i-1}, \alpha_i \in (\Z/d\Z)\wr B_n^{\mathrm{aff}}$ and $\alpha_{i} = \gamma_i\alpha_{i-1}\gamma_i^{-1}$\,;
\vskip .1cm
\item there exists  $n \geq 1$ such that $\alpha_{i-1} \in (\Z/d\Z)\wr B_n^{\mathrm{aff}}$, $\alpha_{i} \in (\Z/d\Z)\wr B_{n+1}^{\mathrm{aff}}$ and $\alpha_i =\alpha_{i-1}\,\sigma_n^{\pm 1}$\,;
\vskip .1cm
\item there exists  $n \geq 1$ such that $\alpha_{i-1} \in (\Z/d\Z)\wr B_{n+1}^{\mathrm{aff}}$, $\alpha_{i} \in  (\Z/d\Z)\wr B_n^{\mathrm{aff}}$ and $\alpha_{i-1} = \alpha_{i}\,\sigma_n^{\pm 1}$\,.
\end{enumerate}
We will write $\alpha \sim \alpha'$ for two equivalent $(\Z/d\Z)$-framed affine braids.}
\end{defn}

Two $(\Z/d\Z)$-framed solid torus links are isotopic if the underlying solid torus links are isotopic and the framing is conserved. The following theorem has been shown in \cite[Theorem 3]{La1} if we assume that $\alpha, \alpha'\in\bigcup_{n \geq 1}B_n^{\mathrm{aff}}$ (case $d=1$). It has been also proved in \cite[Lemma 1]{KoSm} for (non-affine) $(\Z/d\Z)$-framed braids (case $m=1$). Using \cite[Theorem 3]{La1}, the proof of \cite[Lemma 1]{KoSm} generalises straightforwardly to our setting. We give all details here for completeness.

\begin{thm}\label{Sofia2}
Let $\alpha, \alpha' \in \bigcup_{n \geq 1}(\Z/d\Z)\wr B_n^{\mathrm{aff}}$. The $(\Z/d\Z)$-framed solid torus links $\widehat{\alpha}$ and $\widehat{\alpha'}$ are isotopic if and only if $\alpha \sim \alpha'$.
\end{thm}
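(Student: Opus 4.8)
The plan is to reduce the statement to the classical (non-framed) version in \cite[Theorem 3]{La1} by treating framings as extra bookkeeping that is transported correctly by all the relevant moves. First I would recall that an element $\alpha\in(\Z/d\Z)\wr B_n^{\mathrm{aff}}$ decomposes uniquely as $\alpha=t_1^{a_1}\cdots t_n^{a_n}\beta$ with $\beta\in B_n^{\mathrm{aff}}$ its braiding part, and that the closure $\widehat{\alpha}$ is obtained from $\widehat{\beta}$ by assigning to each connected component the sum in $\Z/d\Z$ of the framings $a_i$ of the strands composing it. The ``if'' direction is the straightforward one: I would check that each of the three elementary moves (i)--(iii) in the definition of $\alpha\sim\alpha'$ preserves the isotopy class of the framed closure. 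For conjugation (i), $\widehat{\gamma\alpha\gamma^{-1}}$ and $\widehat{\alpha}$ have isotopic underlying solid torus links by \cite[Theorem 3]{La1}, and the framing of each component is unchanged because conjugation only permutes/relabels strands within a component while the total framing on a component is a conjugation-invariant (this uses relation (\ref{conjug}), i.e.\ that moving the framing part past $\gamma$ permutes the $a_i$ by $\overline\gamma$, and the sum over a cycle is invariant). For the stabilisation moves (ii)--(iii), adding or removing a strand $\sigma_n^{\pm1}$ with framing $0$ on the new strand does not change the underlying link class (again \cite[Theorem 3]{La1}) and does not change any framing because the new strand, after closure, either forms a component of framing $0$ that is cancelled, or merges into an existing component contributing $0$ to its total.

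For the ``only if'' direction, suppose $\widehat{\alpha}$ and $\widehat{\alpha'}$ are isotopic as $(\Z/d\Z)$-framed solid torus links, so in particular the underlying solid torus links $\widehat{\beta}$ and $\widehat{\beta'}$ are isotopic, where $\beta,\beta'$ are the braiding parts. By \cite[Theorem 3]{La1}, there is a sequence of elementary moves of types (i)--(iii) (with no framings) connecting $\beta$ to $\beta'$ inside $\bigcup_n B_n^{\mathrm{aff}}$. I would lift this sequence to $\bigcup_n (\Z/d\Z)\wr B_n^{\mathrm{aff}}$: carry the framing part $t_1^{a_1}\cdots t_n^{a_n}$ along, using (\ref{conjug}) to rewrite it whenever a conjugation move is applied, and introducing framing $0$ on each new strand at a stabilisation move. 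This produces a framed affine braid $\alpha''$ with braiding part $\beta'$ and some framing vector $(a_1'',\dots,a_{n'}'')$ such that $\alpha\sim\alpha''$. It remains to show $\alpha''\sim\alpha'$; since both have the same braiding part $\beta'$ and (by hypothesis plus the ``if'' direction already proved) their closures are isotopic framed links with the same underlying link $\widehat{\beta'}$, the total framings assigned to each connected component of $\widehat{\beta'}$ must agree. So the framing vectors of $\alpha''$ and $\alpha'$ have equal sums along each orbit of the permutation $\overline{\beta'}$ acting on $\{1,\dots,n'\}$ (the orbits index the components of $\widehat{\beta'}$). Using conjugation by elements $t_j$ — which fixes the braiding part and shifts framings — together with (\ref{conjug}) to cyclically redistribute framing within each orbit, one checks that any two framing vectors with equal orbit-sums are related by a sequence of moves of type (i); hence $\alpha''\sim\alpha'$, and by transitivity $\alpha\sim\alpha'$.

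The main obstacle, and the step deserving the most care, is the last reduction: showing that two framed affine braids with the \emph{same} braiding part and equal total framings on each component are equivalent. The cleanest way is to note that conjugating $\alpha=t_1^{a_1}\cdots t_{n'}^{a_{n'}}\beta'$ by $t_j$ gives $t_j \alpha t_j^{-1} = t_1^{a_1}\cdots t_{n'}^{a_{n'}}\, (t_j \beta' t_j^{-1})$, and moving the framing part of $t_j$ through $\beta'$ via (\ref{conjug}) turns this into $t_1^{a_1'}\cdots t_{n'}^{a_{n'}'}\beta'$ where $a'$ differs from $a$ by $+1$ in position $j$ and $-1$ in position $\overline{\beta'}{}^{-1}(j)$ (mod $d$) — i.e.\ a move along the cycle structure. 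Since such moves generate all framing vectors with fixed orbit-sums, this closes the argument. I would also remark that all closures here are taken in the fixed ambient solid torus, so the ``framing is conserved'' clause in the definition of isotopy of framed solid torus links is exactly matched by the component-wise framing sums, with no framing change coming from Reidemeister-I-type moves since those are excluded by working up to the moves (i)--(iii) rather than arbitrary isotopy of diagrams.
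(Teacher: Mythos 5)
Your proposal is correct and follows essentially the same route as the paper: reduce to the non-framed statement of \cite[Theorem 3]{La1}, check that the moves (i)--(iii) preserve the component-wise framing sums, and then settle the key remaining case of two framed braids with the same braiding part and equal framing sums on each cycle of $\overline{\beta'}$ by conjugating with a pure framing element --- the paper does this by explicitly solving the linear system $r_i - r_{\overline{\beta}^{-1}(i)} = a_i' - a_i$ cycle by cycle, which is the same as your ``moves along the cycle structure generate all vectors with fixed orbit-sums'' argument. Two small points to tidy: conjugation by $t_j$ changes the framing by $+1$ at position $j$ and $-1$ at position $\overline{\beta'}(j)$ (not $\overline{\beta'}^{-1}(j)$, though this does not affect the argument), and at a destabilisation step the strand being removed may carry nonzero framing, which must first be pushed onto strand $n$ by a $t_n$-conjugation (as the paper does explicitly) before move (iii) can be applied.
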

\begin{proof}
Let $\alpha = t_1^{a_1}\ldots t_n^{a_n} \beta\in(\Z/d\Z)\wr B_n^{\mathrm{aff}}$ and $\alpha' = t_1^{a_1'}\ldots t_m^{a_m'} \beta'\in(\Z/d\Z)\wr B_m^{\mathrm{aff}}$, where $\beta\in  B_n^{\mathrm{aff}}$, $\beta'\in  B_m^{\mathrm{aff}}$ and $a_1,\ldots,a_n,a'_1,\ldots,a'_m\in\{0,\ldots,d-1\}$.

Assume that $\alpha \sim \alpha'$. Then we also have $\beta \sim \beta'$ because conjugating in $(\Z/d\Z)\wr B_n^{\mathrm{aff}}$ by elements $t_j$ does not change the braiding part. Using the known result for affine braids in $B_n^{\mathrm{aff}}$, we deduce that $ \widehat{\beta}$ and $\widehat{\beta'}$ are isotopic solid torus links.
We then observe that moves (i)--(iii) do not change the framing of each link component after the framed braids are closed. It is obvious for moves (ii) and (iii). For the move (i), let $\gamma=t_1^{c_1}\ldots t_n^{c_n} \delta$ with $\delta\in B_n^{\mathrm{aff}}$ and write
\[\gamma\alpha\gamma^{-1}=
t_1^{c_1}\ldots t_n^{c_n} \delta\cdot t_1^{a'_1}\ldots t_n^{a'_n}\beta\cdot\delta^{-1}t_1^{-c_1}\ldots t_n^{-c_n}\,.\]
Thus after the closure, the framings coming from $\gamma$ and $\gamma^{-1}$ cancel each other out. We conclude that $\widehat{\alpha}$ and $\widehat{\alpha'}$ are isotopic $(\Z/d\Z)$-framed solid torus links.

 Now assume that  $ \widehat{\alpha}$ and $\widehat{\alpha'}$ are isotopic $(\Z/d\Z)$-framed solid torus links. Then $ \widehat{\beta}$ and $\widehat{\beta'}$ are isotopic solid torus links. Using again the known result for affine braids, we deduce that $\beta \sim \beta'$ as affine braids. Without loss of generality, we may assume that $\beta'$ is obtained from $\beta$ with just one of the moves (i) or (ii), where $\gamma_i$ in move (i) is restricted to be in $B_n^{\mathrm{aff}}$. 

If $m=n+1$ and $\beta'=\beta\,\sigma_n^{\pm1}$, then $\alpha'=t_1^{a_1'}\ldots t_{n}^{a_{n}'}t_{n+1}^{a_{n+1}'} \beta\,\sigma_n^{\pm1}$. Note that $\beta$ commutes with $t_{n+1}$ and therefore, $\alpha'=t_1^{a_1'}\ldots t_{n}^{a_{n}'}\beta\,\sigma_n^{\pm1}t_{n}^{a_{n+1}'}$. By first conjugating by $t_{n}^{a_{n+1}'}$ and then applying move (iii), we obtain that
\[\alpha'\,\sim\, t_1^{a_1'}\ldots t_{n}^{a_{n}'+a_{n+1}'}\,\beta\,\sigma_n^{\pm1}\,\sim\, t_1^{a_1'}\ldots t_{n}^{a_{n}'+a_{n+1}'}\,\beta.\]
If $m=n$ and $\beta'$ = $\gamma\beta \gamma^{-1}$ for some $\gamma \in B_n^{\mathrm{aff}}$ then, by (\ref{conjug}) and move (i), we have
\[\alpha'=\gamma\cdot t_1^{a_{\overline{\gamma}(1)}'}\ldots t_{n}^{a_{\overline{\gamma}(n)}'}\,\beta\cdot \gamma^{-1}\,\sim\, t_1^{a_{\overline{\gamma}(1)}'}\ldots t_{n}^{a_{\overline{\gamma}(n)}'}\,\beta.\]
In both cases, we have an element $\alpha''\in(\Z/d\Z)\wr B_n^{\mathrm{aff}}$ such that $\alpha' \sim \alpha''$, whence $\widehat{\alpha'}$ and $\widehat{\alpha''}$ are isotopic, and the braiding part of $\alpha''$ is the same as the braiding part of $\alpha$. So it suffices to show that if two $(\Z/d\Z)$-framed affine braids with the same braiding part have isotopic closure, then they are equivalent. We will show that in fact they are conjugate by an element with trivial braiding part.

We can assume now that $m=n$ and $\beta'=\beta$. Let $\overline{\beta}=\tau_1\tau_2\ldots\tau_k$ be the decomposition of $\overline{\beta}$ into disjoint cycles with $C_1,\ldots,C_k\subset\{1,\ldots,n\}$ the supports of $\tau_1,\ldots,\tau_k$. Since $\widehat{\alpha}$ and $\widehat{\alpha'}$ are isotopic we have 
\begin{equation}\label{samesum}
\sum_{i \in C_j}a_i = \sum_{i \in C_j}a_i' \quad \text{for all $j=1,\ldots,k$}.
\end{equation}
Conjugating  $ \alpha = t_1^{a_1}\ldots t_n^{a_n} \beta$ by an element $t_1^{r_1}\ldots t_n^{r_n}$, we obtain
$$t_1^{a_1+r_1-r_{\overline{\beta}^{-1}(1)}}t_2^{a_2+r_2-r_{\overline{\beta}^{-1}(2)}}\ldots t_n^{a_n+r_n-r_{\overline{\beta}^{-1}(n)}}\,\beta.$$
We want to show that, for some values of $r_1,\ldots,r_n$, this is equal to $\alpha'$. Thus we need a solution to the following system of equations for $r_1,\ldots,r_n$:
\begin{equation}\label{system}
r_i - r_{\tau_j^{-1}(i)} = a_i' -a_i \quad\  \text{for } i \in \tau_j,\ \  j=1,\ldots,k.
\end{equation}

Let $j \in \{1,\ldots,k\}$. Suppose $\tau_j$ is a cycle of order $m_j$ and write $\tau_j=(p_{j,1},\ldots,p_{j,m_j})$, that is, choose an element $p_{j,1}\in C_j$ and define $p_{j,l+1}:=\tau_j(p_{j,l})$ for $l=1,\ldots,m_j-1$. Set
\begin{equation}\label{sol}
r_{p_{j,l}} := \sum_{\mu=1}^{l} (a_{p_{j,\mu}}' -  a_{p_{j,\mu}})\ \ \ \ \text{for $l=1,\ldots,m_j$.}
\end{equation}
Then, for any $l\in\{1,\ldots,m_j\}$, we have
\[r_{p_{j,l}} - r_{\tau_j^{-1}(p_{j,l})} =\left\{\begin{array}{ll} 
r_{p_{j,l}} - r_{p_{j,l-1}}=a_{p_{j,l}}' - a_{p_{j,l}} & \text{if $l>1$,}\\ & \\
r_{p_{j,1}} - r_{p_{j,m_j}}=a_{p_{j,1}}' - a_{p_{j,1}} - r_{p_{j,m_j}} & \text{if $l=1$.}
\end{array}\right.\]
Since $r_{p_{j,m_j}}=\sum_{i \in C_j}(a_i'-a_i)=0$ by (\ref{samesum}), we conclude that (\ref{sol}) is a solution to the system of equations (\ref{system}).
\end{proof}

\subsection{Invariants for $(\Z/d\Z)$-framed solid torus links}

Let us consider the group algebra $\cR_m [(\Z/d\Z)\wr B_n^{\mathrm{aff}}]$ of the modular framed affine braid group over $\cR_m$.
As we mentioned earlier, the affine and cyclotomic Yokonuma--Hecke algebras are quotients of this algebra.
Thus, there is a natural surjective algebra homomorphim $\delta_n : \cR_m [(\Z/d\Z)\wr B_n^{\mathrm{aff}}] \rightarrow {\rm Y}(d,m,n)$ given by
\begin{equation}\label{delta}
\sigma_0\mapsto X_1\,,\ \ \ \ \ \sigma_i\mapsto g_i\,,\ i=1,\ldots,n-1,\ \ \ \ \ \text{and}\ \ \ \ \ t_j\mapsto t_j\,,\ j=1,\ldots,n.
\end{equation}

Let us also consider the Markov trace $\tr$ of Definition \ref{def-Markov} 
(given by Proposition \ref{prop-tau}) 
with parameters $z$ and $x_{a,b}$, with $a\in E_m$ and $b\in \{0,\ldots,d-1\}$, such that $x_{0,0}=1$. We also assume that $z\neq 0$. From now on, we set 
$$E := \tr(e_i)=\frac{1}{d} \sum_{s=0}^{d-1} x_{0,-s}x_{0,s} \quad \text{for all }\, i \in \Z_{>0}\,,$$
where the last equality follows easily from (\ref{Markov3}).

We extend the ring of definition by setting $\displaystyle\widetilde{\cR}_m:=\cR_m[z^{-1},\sqrt{\frac{z - (q-q^{-1})E}{z}}^{\pm1}]$. We define
$$\omega:= \frac{z - (q-q^{-1})E}{z}\in\widetilde{\cR}_m\ \ \ \ \ \text{and}\ \ \ \ \ D:= \frac{1}{z \sqrt{\omega}} \in\widetilde{\cR}_m.$$

For any $\alpha \in (\Z/d\Z) \wr B_n^{\mathrm{aff}}$, we set
\begin{equation}
\Gamma_m( {\alpha} ) := D^{n-1} \sqrt{\omega}^{\epsilon(\alpha)} (\tr \circ \delta_n) (\alpha)\in\widetilde{\cR}_m ,
\end{equation}
where $\epsilon(\alpha)$ is defined in (\ref{exponent}) and $\delta_n$ is the natural surjection from 
$\cR_m [(\Z/d\Z)\wr B_n^{\mathrm{aff}}]$ to  ${\rm Y}(d,m,n)$
 defined in (\ref{delta}). 

\begin{prop}\label{stableunderer} Let $m\in\Z_{>0}\cup\{\infty\}$.
Assume that, for any $n\geq 1$,
\begin{equation}\label{E-cond}
\tr(ue_n) = \tr(u)\tr(e_n) \quad \text{ for all } u \in {\rm Y}(d,m,n).
\end{equation}
Let $\alpha,\,\alpha' \in\bigcup_{n \geq 1}(\Z/d\Z)\wr B_n^{\mathrm{aff}}$. If $\alpha \sim \alpha'$, then $\Gamma_m( \alpha)=\Gamma_m(\alpha')$.
\end{prop}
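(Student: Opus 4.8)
The plan is to verify invariance of $\Gamma_m$ under each of the three moves (i)--(iii) defining the equivalence relation $\sim$. Write $\alpha \in (\Z/d\Z)\wr B_n^{\mathrm{aff}}$, and recall $\Gamma_m(\alpha) = D^{n-1}\sqrt{\omega}^{\,\epsilon(\alpha)}(\tr\circ\delta_n)(\alpha)$, where $\epsilon$ depends only on the braiding part and is additive in the exponents of the $\sigma_i$'s, while $\delta_n$ sends $\sigma_0 \mapsto X_1$, $\sigma_i \mapsto g_i$, $t_j \mapsto t_j$. The key observation throughout is that $\tr$ is defined on the whole chain ${\rm Y}(d,m,n)$ and, by the remark following Proposition~\ref{prop-tau}, $\tr^{(n')}(u) = \tr^{(n)}(u)$ for $u \in {\rm Y}(d,m,n)$ and $n' \geq n$; so stabilising the number of strands costs only the normalisation factors.

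For move (i), conjugation $\alpha_i = \gamma_i\alpha_{i-1}\gamma_i^{-1}$ with $\gamma_i, \alpha_{i-1} \in (\Z/d\Z)\wr B_n^{\mathrm{aff}}$: here $\epsilon(\alpha_i) = \epsilon(\alpha_{i-1})$ since $\epsilon(\gamma_i)$ and $\epsilon(\gamma_i^{-1})$ cancel, the power of $D$ is unchanged, and $(\tr\circ\delta_n)(\gamma_i\alpha_{i-1}\gamma_i^{-1}) = (\tr\circ\delta_n)(\alpha_{i-1})$ by the trace property (\ref{Markov1}) applied in ${\rm Y}(d,m,n)$, since $\delta_n$ is an algebra homomorphism. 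For moves (ii) and (iii) it suffices to treat (ii) (the other being its inverse): $\alpha_i = \alpha_{i-1}\,\sigma_n^{\pm 1}$ with $\alpha_{i-1} \in (\Z/d\Z)\wr B_n^{\mathrm{aff}}$, so $\alpha_i \in (\Z/d\Z)\wr B_{n+1}^{\mathrm{aff}}$. Then $\epsilon(\alpha_i) = \epsilon(\alpha_{i-1}) \pm 1$, the power of $D$ goes from $D^{n-1}$ to $D^n$, and we must compute $(\tr\circ\delta_{n+1})(\alpha_{i-1}\sigma_n^{\pm1}) = \tr(u\, g_n^{\pm1})$ where $u := \delta_n(\alpha_{i-1}) \in {\rm Y}(d,m,n)$.

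The main computation is thus evaluating $\tr(u\,g_n^{\pm 1})$ for $u \in {\rm Y}(d,m,n)$. For the $+1$ case, (\ref{Markov1}) and (\ref{Markov2}) give $\tr(u\,g_n) = \tr(g_n u) = z\,\tr(u)$, and the bookkeeping is $D^n\sqrt{\omega}^{\,\epsilon(\alpha_{i-1})+1}\cdot z\,\tr(u) = D^{n-1}\sqrt{\omega}^{\,\epsilon(\alpha_{i-1})}\tr(u)$ precisely because $D \cdot \sqrt{\omega}\cdot z = \frac{1}{z\sqrt{\omega}}\cdot\sqrt{\omega}\cdot z = 1$. For the $-1$ case, write $g_n^{-1} = g_n - (q-q^{-1})e_n$, so $\tr(u\,g_n^{-1}) = \tr(u\,g_n) - (q-q^{-1})\tr(u\,e_n) = z\,\tr(u) - (q-q^{-1})\tr(u)\tr(e_n) = z\,\tr(u) - (q-q^{-1})E\,\tr(u) = z\,\omega\,\tr(u)$, where the crucial second equality is exactly the hypothesis (\ref{E-cond}). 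Then $D^n\sqrt{\omega}^{\,\epsilon(\alpha_{i-1})-1}\cdot z\,\omega\,\tr(u) = D^{n-1}\sqrt{\omega}^{\,\epsilon(\alpha_{i-1})}\tr(u)$ since $D\cdot\sqrt{\omega}^{-1}\cdot z\,\omega = \frac{1}{z\sqrt{\omega}}\cdot\frac{z\,\omega}{\sqrt{\omega}} = 1$. Chaining these equalities along the finite sequence $\alpha_0, \ldots, \alpha_r$ from the definition of $\sim$ yields $\Gamma_m(\alpha) = \Gamma_m(\alpha')$. The only genuine input beyond formal manipulation is the hypothesis (\ref{E-cond}), which is what makes the negative stabilisation move work; everything else is the trace property, the Markov property, and the design of the normalisation constants $D$ and $\omega$.
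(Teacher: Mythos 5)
Your proof is correct and follows essentially the same route as the paper's: reduce to the conjugation and stabilisation moves, use the trace property (\ref{Markov1}) for the former, the Markov property (\ref{Markov2}) for positive stabilisation, and $g_n^{-1}=g_n-(q-q^{-1})e_n$ together with the hypothesis (\ref{E-cond}) for negative stabilisation, with the normalisation constants $D$ and $\omega$ absorbing the factors. The only cosmetic difference is that the paper expresses $\Gamma_m(\alpha\,\sigma_n^{-1})$ in terms of $\Gamma_m(\alpha\,\sigma_n)$ rather than computing it directly, which changes nothing.
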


\begin{proof}
It is enough to show that, for all $\alpha, \beta \in (\Z/d\Z) \wr B_n^{\mathrm{aff}}$, we have 
$$ \Gamma_m(\alpha \beta ) = \Gamma_m(\beta \alpha)\ \ \ \ \ \text{and}\ \ \ \ \ \Gamma_m(\alpha) =  \Gamma_m\bigl(\alpha\,\sigma_n^{\pm 1}\bigr).$$
The first equality follows immediately from (\ref{Markov1}).

Now note that, as we consider the chain (on $n$) of algebras ${\rm Y}(d,m,n)$, we have 
\[\delta_{n+1}\bigl(\alpha\,\sigma_n^{\epsilon}\bigr)=\delta_n(\alpha)\,g_n^{\epsilon}\ \quad \text{ for } \epsilon=\pm 1.\] 
Then, due to (\ref{Markov2}), we have
$$ \Gamma_m\bigl(\alpha\,\sigma_n\bigr) = D^{n} \sqrt{\omega}^{\epsilon(\alpha)+1} \tr\bigl( \delta_n (\alpha) g_n\bigr) = z\, D^{n} \sqrt{\omega}^{\epsilon(\alpha)+1} \tr\bigl( \delta_n (\alpha)\bigr)
=z\, D\,\sqrt{\omega}\, \Gamma_m(\alpha).$$
Since $D=1/z\sqrt{\omega} $, we get $ \Gamma_m\bigl(\alpha\,\sigma_n\bigr)= \Gamma_m(\alpha)$. Further, we have
$$\Gamma_m\bigl(\alpha\,\sigma_n^{-1}\bigr) = D^{n} \sqrt{\omega}^{\epsilon(\alpha)-1} \tr\bigl( \delta_n (\alpha) g_n^{-1}\bigr) =  D^{n} \sqrt{\omega}^{\epsilon(\alpha)-1} \Bigl(\tr\bigl( \delta_n (\alpha)g_n\bigr) -(q-q^{-1})\tr\bigl( \delta_n(\alpha)e_n\bigr)\Bigr).$$
Using  (\ref{Markov2}) and the assumption (\ref{E-cond}), we deduce that
$$\Gamma_m\bigl(\alpha\,\sigma_n^{-1}\bigr) = \bigl(z -(q-q^{-1})E\bigr)\,D^{n} \sqrt{\omega}^{\epsilon(\alpha)-1}\tr\bigl( \delta_n (\alpha)\bigr) = \frac{z - (q-q^{-1})E}{z} \,\omega^{-1}\,\Gamma_m\bigl(\alpha\,\sigma_n\bigr)\,.$$
Since $\omega=(z - (q-q^{-1})E)/{z}$, we get  $\Gamma_m\bigl(\alpha\,\sigma_n^{-1}\bigr)=\Gamma_m\bigl(\alpha\,\sigma_n\bigr)\,.$
\end{proof}

Let $\mathcal{L}_d^{\rm tor}$ denote the set of  $(\Z/d\Z)$-framed  solid torus links. Following Theorem \ref{Sofia1}, 
$$\mathcal{L}_d^{\rm tor} = \bigcup_{n \geq 1} \left\{ \widehat{\alpha}\,|\, \alpha \in (\Z/d\Z)\wr B_n^{\mathrm{aff}}\right\}.$$
Combining Proposition \ref{stableunderer} with Theorem \ref{Sofia2} yields the following result, which is the objective of Section \ref{sec-inv}.

\begin{thm}\label{invariant} Let $m\in\Z_{>0}\cup\{\infty\}$.
Assume that, for any $n\geq 1$, $(\ref{E-cond})$ holds.
Then the map
$$\begin{array}{cccc}
\widehat{\Gamma}_m : & \mathcal{L}_d^{\rm tor} & \rightarrow & \widetilde{\cR}_m\\
& \widehat{\alpha} &  \mapsto & \Gamma_m( \alpha )
\end{array}$$
is an isotopy invariant of oriented $(\Z/d\Z)$-framed solid torus links, that is, if $\widehat{\alpha}=\widehat{\alpha'}$, for some $\alpha, \alpha' \in  \bigcup_{n \geq 1}(\Z/d\Z)\wr B_n^{\mathrm{aff}}$, then $\widehat{\Gamma}_m(\widehat{\alpha})=\widehat{\Gamma}_m(\widehat{\alpha'})$.
\end{thm}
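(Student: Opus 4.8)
The plan is to obtain Theorem~\ref{invariant} as a direct assembly of the three preceding results: the analogue of Alexander's theorem (Theorem~\ref{Sofia1}), the analogue of Markov's theorem (Theorem~\ref{Sofia2}), and the invariance of $\Gamma_m$ under the braid equivalence $\sim$ (Proposition~\ref{stableunderer}). First I would note that Theorem~\ref{Sofia1} guarantees that every $L\in\mathcal{L}_d^{\rm tor}$ is of the form $\widehat{\alpha}$ for some $\alpha\in\bigcup_{n\ge1}(\Z/d\Z)\wr B_n^{\mathrm{aff}}$, so the formula $\widehat{\alpha}\mapsto\Gamma_m(\alpha)$ does attach a value in $\widetilde{\cR}_m$ to every $(\Z/d\Z)$-framed solid torus link; the only thing to check is that this value does not depend on the choice of $\alpha$ whose closure is the given link.

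So I would take $\alpha,\alpha'\in\bigcup_{n\ge1}(\Z/d\Z)\wr B_n^{\mathrm{aff}}$ with $\widehat{\alpha}=\widehat{\alpha'}$, that is, $\widehat{\alpha}$ and $\widehat{\alpha'}$ isotopic as $(\Z/d\Z)$-framed solid torus links. By Theorem~\ref{Sofia2} this is equivalent to $\alpha\sim\alpha'$. Since the affine E-condition~(\ref{E-cond}) is assumed, Proposition~\ref{stableunderer} applies and gives $\Gamma_m(\alpha)=\Gamma_m(\alpha')$. Hence $\widehat{\Gamma}_m(\widehat{\alpha}):=\Gamma_m(\alpha)$ is well defined on $\mathcal{L}_d^{\rm tor}$, which is exactly the statement that $\widehat{\Gamma}_m$ is an isotopy invariant of oriented $(\Z/d\Z)$-framed solid torus links.

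The genuine content has already been isolated in Proposition~\ref{stableunderer}, where the normalising factor $D^{n-1}\sqrt{\omega}^{\epsilon(\alpha)}$ was set up precisely so as to absorb both the change of strand number in the stabilisation moves and the change of exponent sum; so at this stage there is no real obstacle, and the argument is essentially a two-line deduction. The only point that deserves a sentence of care is matching the equivalence relation $\sim$ of Theorem~\ref{Sofia2} with what Proposition~\ref{stableunderer} controls: move~(i) is handled by the trace property~(\ref{Markov1}), and moves~(ii)--(iii) are exactly the positive and negative stabilisations $\alpha\mapsto\alpha\,\sigma_n^{\pm1}$ treated in the second half of the proof of Proposition~\ref{stableunderer}; these are precisely the three cases defining $\sim$, so the two descriptions coincide and the deduction goes through verbatim.
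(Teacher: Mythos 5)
Your proposal is correct and follows exactly the paper's own argument: the paper likewise obtains the theorem by combining Theorem~\ref{Sofia1} (so every link is a closure), Theorem~\ref{Sofia2} (isotopy of closures equals braid equivalence $\sim$), and Proposition~\ref{stableunderer} (invariance of $\Gamma_m$ under $\sim$, granted the affine E-condition). Nothing is missing.
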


 Condition (\ref{E-cond}) will be from now on referred to as the \emph{affine E-condition}. Note that it is a sufficient condition for  $\widehat{\Gamma}_m$ to be an isotopy invariant, but we do not know whether it is necessary.
 
\begin{rems} \label{rem-invariant}{\rm 
{\bf (a)} The affine E-condition is the analogue of the E-condition imposed by Juyumaya and Lambropoulou \cite{jula2}  on classical Yokonuma--Hecke algebras in order to define invariants for (non-affine) framed knots and links. In fact, if we  restrict $\widehat{\Gamma}_m$ to the (non-affine) framed links (case $m=1$), the invariants we constructed in Theorem \ref{invariant} are the same as the ones constructed in \cite{jula2}.\\
{\bf (b)} If, for any $\alpha \in (\Z/d\Z) \wr B_n^{\mathrm{aff}}$, we set
\begin{equation}
\Gamma_m'( {\alpha} ) := D^{n-1} \sqrt{\omega}^{\epsilon'(\alpha)} (\tr \circ \delta_n) (\alpha)\in\widetilde{\cR}_m ,
\end{equation}
where $\epsilon'(\alpha)$ is defined in (\ref{exponent'}), then we can repeat the proof of Proposition \ref{stableunderer} to obtain that $\Gamma_m'$ is stable on the equivalence classes of  $\bigcup_{n \geq 1}(\Z/d\Z)\wr B_n^{\mathrm{aff}}$. Thus, similarly to Theorem  \ref{invariant}, we deduce that the map
$$\begin{array}{cccc}
\widehat{\Gamma}_m' : & \mathcal{L}_d^{\rm tor} & \rightarrow & \widetilde{\cR}_m\\
& \widehat{\alpha} &  \mapsto & \Gamma_m'( \alpha )
\end{array}$$
is also an isotopy invariant of oriented $(\Z/d\Z)$-framed solid torus links. \hfill $\triangle$}
\end{rems}

If now we restrict $\widehat{\Gamma}_m$ to the solid torus links with all framings equal to $0$, then $\widehat{\Gamma}_m$ becomes an invariant of (non-framed) links in the solid torus.
For $m=1$, it was recently shown in \cite{CJKL} that the Juyumaya--Lambropoulou invariants for classical links coming from the Yokonuma--Hecke algebras \cite{jula2,jula3} are not equivalent to the HOMFLYPT polynomial, which is the invariant coming from the usual Hecke algebras (case $m=d=1$). More specifically, there are pairs of classical links that are distinguished by the Juyumaya--Lambropoulou invariants, and not by the HOMFLYPT polynomial. Since the set of classical links can be embedded in the set of classical links in the solid torus, we obtain that the invariants  $\widehat{\Gamma}_m$ restricted to $\mathcal{L}_1^{\rm tor}$ distinguish pairs of links that are not distinguished by the HOMFLYPT-type invariants  
obtained from the affine and cyclotomic Hecke algebras (case $d=1$) in \cite{La1, Gela, La2}. Namely, if $\widehat{\Delta}_m$ denotes the invariant of classical links in the solid torus obtained as the restriction of $\widehat{\Gamma}_m$ to $\mathcal{L}_1^{\rm tor}$, we have the following:

\begin{prop}\label{delta_inv}
Let $m\in\Z_{>0}\cup\{\infty\}$. The invariants $\widehat{\Delta}_m$ are not generically equivalent to the HOMFLYPT-type invariants of links in the solid torus.
\end{prop}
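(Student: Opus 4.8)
The plan is to reduce the statement to the inequivalence, established in \cite{CJKL}, between the Yokonuma--Hecke invariants of classical links and the HOMFLYPT polynomial, by restricting everything to the copy of the set $\mathcal{L}$ of classical links sitting inside $\mathcal{L}_1^{\rm tor}$. Recall that a classical braid $\beta\in B_n$, viewed as an element of $B_n^{\mathrm{aff}}$ not involving $\sigma_0$ and with all framings zero, closes to the $(\Z/d\Z)$-framed solid torus link $\widehat{\beta}$ whose strands do not wind around the pole; by Theorem \ref{Sofia1} this realises $\mathcal{L}$ as the subset of $\mathcal{L}_1^{\rm tor}$ consisting of links contained in a ball inside the solid torus. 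For such a $\beta$ the element $\delta_n(\beta)$ involves no $X_1$, hence lies in the subalgebra of ${\rm Y}(d,m,n)$ generated by $t_1,\dots,t_n,g_1,\dots,g_{n-1}$, that is, in ${\rm Y}_m(d,n)$ (and in ${\rm H}_m(n)$ when $d=1$).

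First I would identify the restriction of $\widehat{\Delta}_m$ to $\mathcal{L}$ with the invariant of classical links coming from the Yokonuma--Hecke algebra constructed in \cite{jula2,jula3}, up to a harmless extension of scalars. Indeed, the restriction of the Markov trace $\tr$ of Definition \ref{def-Markov} to ${\rm Y}_m(d,n)$ still satisfies (\ref{Markov1}), the relation (\ref{Markov2}) with parameter $z$, and the relation $\tr(t_k^b u)=x_{0,b}\tr(u)$ obtained from (\ref{Markov3}); by uniqueness of the Markov trace on the Yokonuma--Hecke algebra \cite{ju2}, it is the $\cR_m$-extension of Juyumaya's Markov trace with parameters $z$ and $x_{0,b}$. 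Moreover the affine $E$-condition (\ref{E-cond}) restricts, on ${\rm Y}_m(d,n)$, to the $E$-condition of \cite{jula2}, and the normalising factors $D^{n-1}\sqrt{\omega}^{\,\epsilon(\beta)}$, together with the constant $E=\tr(e_n)$, coincide with those appearing in \cite{jula2,jula3} (cf.\ Remark \ref{rem-invariant}(a)). Hence $\widehat{\Delta}_m(L)$ equals, for $L\in\mathcal{L}$, the Yokonuma--Hecke invariant of $L$ of \cite{jula3} (read inside $\widetilde{\cR}_m$); in particular $\widehat{\Delta}_m$ separates two classical links exactly when the invariant of \cite{jula3} does.

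Second I would identify the restriction to $\mathcal{L}$ of the HOMFLYPT-type invariants of links in the solid torus of \cite{La1,Gela,La2} (the case $d=1$) with the HOMFLYPT polynomial. When $d=1$ one has $e_i=1$, so $E=1$, $\omega=(z-(q-q^{-1}))/z$ and $D=1/(z\sqrt{\omega})$; for $\beta\in B_n$ the element $\delta_n(\beta)$ lies in the finite Hecke algebra ${\rm H}_m(n)$ of type $A$, on which $\tr$ restricts to the Ocneanu trace with parameter $z$, so the solid torus invariant of $\widehat{\beta}$ is $D^{n-1}\sqrt{\omega}^{\,\epsilon(\beta)}$ times the Ocneanu trace of $\delta_n(\beta)$, i.e.\ the HOMFLYPT polynomial of $\widehat{\beta}$ in the variables $q$ and $z$.

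Finally I would conclude by contradiction. By \cite{CJKL} there exist an integer $d>1$ and two classical links $L_1,L_2$ with the same HOMFLYPT polynomial but different Yokonuma--Hecke invariants for this $d$; fixing this $d$, the two previous steps give $\widehat{\Delta}_m(L_1)\neq\widehat{\Delta}_m(L_2)$ while $L_1$ and $L_2$, viewed as links in the solid torus, have the same HOMFLYPT-type invariant. If $\widehat{\Delta}_m$ were generically equivalent to the HOMFLYPT-type invariant of links in the solid torus---so that one is obtained from the other by a substitution of the parameters, up to a factor depending only on the number of components and the exponent sum---then $\widehat{\Delta}_m(L_1)=\widehat{\Delta}_m(L_2)$, a contradiction. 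The delicate point is the bookkeeping in the first two steps: checking that $\tr$ genuinely restricts to Juyumaya's trace and to the Ocneanu trace with matching parameters, that the affine $E$-condition admits a solution whose $x_{0,b}$ reproduce the parameter choice used in \cite{CJKL}, and that the extra indeterminates $v_1,\dots,v_m$ and the parameters $x_{a,b}$ with $a\neq0$ play no role for non-winding links---together with fixing the precise meaning of \emph{generically equivalent} so that the single pair $(L_1,L_2)$ suffices.
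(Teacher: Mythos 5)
Your proposal is correct and follows essentially the same route as the paper, whose justification of Proposition \ref{delta_inv} is precisely the preceding paragraph: embed the classical links into $\mathcal{L}_1^{\rm tor}$, note that the restriction of $\widehat{\Gamma}_m$ there recovers the Juyumaya--Lambropoulou invariants while the $d=1$ invariants restrict to the HOMFLYPT polynomial, and invoke the pairs of links from \cite{CJKL}. Your version merely spells out the bookkeeping (matching of traces, parameters and normalisations) that the paper leaves implicit via Remark \ref{rem-invariant}(a).
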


\begin{rem}{\rm
The term ``generically'' in Proposition \ref{delta_inv} is used in order to exclude some extreme cases where the invariants $\widehat{\Delta}_m$ coincide with 
the HOMFLYPT-type invariants of links in the solid torus, such as when $d=1$ or when, following the analogous result in the non-affine case \cite{chla}, $E=1$ or $q=\pm 1$. 
 \hfill $\triangle$}
\end{rem}

\begin{rem}{\rm In \cite{jula4}, Juyumaya and Lambropoulou used the Yokonuma--Hecke algebras to construct invariants for singular knots and links. We expect that 
their results can be generalised to the case of ${\rm Y}(d,m,n)$ for $m \neq 1$ in order to obtain invariants for singular knots and links in the solid torus. \hfill $\triangle$}
\end{rem}

\subsection{The affine E-system} 

In this subsection, we will study further the affine E-condition and show that it imposes some restrictions on the values of the parameters $x_{a,b}$ of the Markov trace $\tr$.

\begin{prop} The affine E-condition holds, that is, we have
$$\tr(ue_n) = \tr(u)\tr(e_n) \quad\ \text{for any $n\geq 1$ and all $u \in {\rm Y}(d,m,n)$},$$
if and only if
\begin{equation}\label{E-syst}
 \frac{1}{d} \sum_{s=0}^{d-1}x_{0,-s}x_{a,b+s}  = x_{a,b}\,E  \quad \text{ for all } a \in E_m,\ \ b \in \{0,\ldots,d-1\}.
\end{equation}
\end{prop}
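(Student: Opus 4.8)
The plan is to prove both implications by first isolating one unconditional identity that expresses $\tr(u\,e_n)$ through the numbers $\tr(u\,t_n^s)$, and then reducing everything to combinatorics of the parameters $x_{a,b}$.

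For the \emph{only if} direction, I would feed a well-chosen element into the affine E-condition $(\ref{E-cond})$. Take $u=X_1^a t_1^b\in{\rm Y}(d,m,1)$ with $a\in E_m$, $b\in\{0,\dots,d-1\}$. Expanding $e_1=\tfrac1d\sum_{s=0}^{d-1}t_1^st_2^{-s}$ gives $u\,e_1=\tfrac1d\sum_s X_1^at_1^{b+s}t_2^{-s}$. Since $\tr$ is a trace and $t_2$ is central in ${\rm Y}(d,m,1)$, each summand satisfies $\tr(X_1^at_1^{b+s}t_2^{-s})=\tr(t_2^{-s}X_1^at_1^{b+s})=x_{0,-s}\,\tr(X_1^at_1^{b+s})=x_{0,-s}x_{a,b+s}$ by $(\ref{Markov3})$ (applied with $k=2$, then $k=1$, using $x_{a,b+d}=x_{a,b}$, $t_j^{b+d}=t_j^b$). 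Comparing with $\tr(u)\tr(e_1)=x_{a,b}E$ yields exactly $(\ref{E-syst})$.

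The core of the proof is the reduction formula
\[
\tr(u\,e_n)=\frac1d\sum_{s=0}^{d-1}x_{0,-s}\,\tr(u\,t_n^s)\qquad\text{for all }n\ge 1,\ u\in{\rm Y}(d,m,n),
\]
valid unconditionally. To obtain it I would write $e_n=\tfrac1d\sum_s t_n^st_{n+1}^{-s}$, move $t_{n+1}^{-s}$ (central in ${\rm Y}(d,m,n)$) to the front, observe $t_{n+1}^{-s}=W^{(n+1)}_{n,0,-s}$, apply $\tr_{n+1}$ via $(\ref{RT2})$, and then use $\tr=\tr_1\circ\cdots\circ\tr_{n+1}$ together with the fact that the traces agree along the chain $(\ref{chainproperty})$. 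This turns the affine E-condition at level $n$ into the equivalent statement $\tfrac1d\sum_s x_{0,-s}\tr(u\,t_n^s)=E\,\tr(u)$ for all $u\in{\rm Y}(d,m,n)$. For the \emph{if} direction I would then assume $(\ref{E-syst})$ and prove $\tr(u\,e_n)=E\,\tr(u)$ for all $u\in{\rm Y}(d,m,n)$ by induction on $n$, checking it on the basis elements $u=W^{(n)}_{J,a,b}\,w$ with $w\in{\rm Y}(d,m,n-1)$, $J\in\{0,\dots,n-1\}$, $a\in E_m$, $b\in\{0,\dots,d-1\}$ (this also covers the base case $n=1$, where only $J=0$ occurs). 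If $J=n-1$, then $u=\tX_n^at_n^bw$ and $u\,t_n^s=\tX_n^at_n^{b+s}w$, so $\tr(u\,t_n^s)=x_{a,b+s}\tr(w)$ by $(\ref{Markov3})$; summing and invoking $(\ref{E-syst})$ gives $\tr(u\,e_n)=x_{a,b}E\,\tr(w)=E\,\tr(u)$. If $J<n-1$, I use $W^{(n)}_{J,a,b}=W^{(n-1)}_{J,a,b}\,g_{n-1}$, together with $g_{n-1}t_n^s=t_{n-1}^sg_{n-1}$ and the centrality of $t_n$ in ${\rm Y}(d,m,n-1)$, to rewrite $u\,t_n^s=W^{(n-1)}_{J,a,b}\,t_{n-1}^s\,g_{n-1}\,w$; then cyclicity of $\tr$ and $(\ref{Markov2})$ give $\tr(u\,t_n^s)=z\,\tr\bigl((w\,W^{(n-1)}_{J,a,b})\,t_{n-1}^s\bigr)$. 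Summing over $s$, the reduction formula at level $n-1$ turns this into $z\,\tr\bigl((w\,W^{(n-1)}_{J,a,b})\,e_{n-1}\bigr)$, which by the induction hypothesis equals $zE\,\tr(W^{(n-1)}_{J,a,b}w)$; since $\tr(u)=z\,\tr(W^{(n-1)}_{J,a,b}w)$ by $(\ref{form-tau})$, this forces $\tr(u\,e_n)=E\,\tr(u)$.

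The only genuine difficulty is bookkeeping: keeping track of which algebra in the chain each factor lives in, and of the periodicity/normalisation conventions ($\tX_n$ versus $W^{(n)}_{n-1,a,b}$, $t_j^{b+d}=t_j^b$, $x_{a,b+d}=x_{a,b}$) when shifting framing exponents. Isolating the reduction formula at the outset is what makes everything transparent, since after that step both directions become statements purely about the commutative combinatorics of the $x_{a,b}$ and $E$.
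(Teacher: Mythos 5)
Your proof is correct and follows essentially the same route as the paper: the \emph{only if} direction via the $n=1$ computation with $u=X_1^at_1^b$, and the \emph{if} direction by induction on $n$ over the basis elements $W^{(n)}_{J,a,b}w$, split into the cases $J=n-1$ and $J<n-1$ and reduced to $z\,\tr\bigl(wW^{(n-1)}_{J,a,b}e_{n-1}\bigr)$ in the latter. The only (cosmetic) difference is that you isolate the unconditional identity $\tr(ue_n)=\tfrac1d\sum_s x_{0,-s}\tr(ut_n^s)$ as a standalone lemma, which the paper applies implicitly at the start of each case.
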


\begin{proof} Let us take first $n=1$.
We have, using (\ref{Markov3}), 
$$ \tr(X_1^at_1^be_1) = \frac{1}{d} \sum_{s=0}^{d-1} x_{0,-s} \tr(X_1^at_1^bt_1^{s})=  \frac{1}{d} \sum_{s=0}^{d-1}x_{0,-s}x_{a,b+s}.$$
As $\tr(X_1^at_1^b)=x_{a,b}$, we conclude that if the affine E-condition holds, then (\ref{E-syst}) must hold. 

Now assume that (\ref{E-syst}) is true. We will prove that
$$\tr(ue_n) = \tr(u)\tr(e_n) = \tr(u)E \quad \text{ for all } u \in {\rm Y}(d,m,n)$$
by induction on $n$, and by taking $u$ to be an arbitrary element of the basis (\ref{base}) of $ {\rm Y}(d,m,n)$. We have already proved it for $n=1$.

Let us take $n>1$, and let $u = W^{(n)}_{J,a,b}\,w$, where $J \leq n-1$, $a \in E_m$, $b \in \{0,\ldots,d-1\}$ and $w \in {\rm Y}(d,m,n-1)$. 
First assume that $J<n-1$. Then 
$$\tr(u)= z\,\tr( W^{(n-1)}_{J,a,b}\,w)$$ and 
\[\begin{array}{ll}
\tr(ue_n)  & = \displaystyle\frac{1}{d}\sum_{s=0}^{d-1}x_{0,-s}\tr(W^{(n)}_{J,a,b}t_n^s\,w) = \frac{1}{d}\sum_{s=0}^{d-1}x_{0,-s}\tr(W^{(n)}_{J,a,b+s}\,w)= \frac{z}{d}\sum_{s=0}^{d-1}x_{0,-s}\tr(W^{(n-1)}_{J,a,b+s}\,w)\\[1.2em]
& =\displaystyle\frac{z}{d}\sum_{s=0}^{d-1}x_{0,-s}\tr(W^{(n-1)}_{J,a,b}t_{n-1}^s\,w)= \frac{z}{d}\sum_{s=0}^{d-1}\tr(t_n^{-s}W^{(n-1)}_{J,a,b}t_{n-1}^s\,w)= z\, \tr(W^{(n-1)}_{J,a,b}e_{n-1}\,w)\\[1.2em]
& = z\, \tr(w\,W^{(n-1)}_{J,a,b}e_{n-1}),
\end{array}\]
where, besides the properties of the Markov trace (\ref{Markov1})--(\ref{Markov3}), we used that  $W^{(k)}_{J,a,b}t_k=W^{(k)}_{J,a,b+1}$ for any $k\geq1$ and $t_nW^{(n-1)}_{J,a,b}=W^{(n-1)}_{J,a,b}t_n$. Using the induction hypopthesis, we conclude that
$$\tr(ue_n)=z\, \tr(w\,W^{(n-1)}_{J,a,b})\,E=z\, \tr(W^{(n-1)}_{J,a,b}\,w)\,E=\tr(u)\,E.$$

Now assume that $J=n-1$. Then
$$\tr(u) = x_{a,b} \,\tr(w)$$
and, with a similar calculation to the one above,
$$\tr(ue_n) =  \frac{1}{d}\sum_{s=0}^{d-1}x_{0,-s}\tr(W^{(n)}_{J,a,b}t_n^s\,w)=\frac{1}{d}\sum_{s=0}^{d-1}x_{0,-s}\tr(W^{(n)}_{J,a,b+s}\,w)= \frac{1}{d}\sum_{s=0}^{d-1}x_{0,-s}x_{a,b+s}\tr(w).$$
Finally, we use the assumption (\ref{E-syst}) to conclude that $\tr(ue_n)= x_{a,b}\,E\,w=\tr(u)\,E\,$.
\end{proof}

We have just proved that the affine E-condition holds if and only if the parameters $x_{a,b}$ of the Markov trace $\tr$ are solutions of the system of equations (\ref{E-syst}). We will call this system of equations \emph{the affine E-system}. In the next subsection we will classify its solutions.

\begin{rem}{\rm Fix $m\in\Z_{>0}\cup\{\infty\}$.
Then for each solution of the affine E-system, we obtain a different isotopy invariant $\widehat{\Gamma}_m$.
It is an open question whether these isotopy invariants are equivalent.
\hfill $\triangle$}
\end{rem}

\subsection{Solutions of the affine E-system}  
We first recall the classification, obtained in the Appendix of \cite{jula2} by G\'erardin\footnote{G\'erardin works over $\C$, but his proof works also over $\cF_m$.}, of the solutions of the part of the system (\ref{E-syst}) corresponding to $a=0$.
For $a=0$, the system (\ref{E-syst}) becomes a system  of equations with unknowns $x_{0,1},\ldots,x_{0,d-1}$ (recall that $x_{0,0}=1$), known simply as \emph{E-system}.
The solutions of this system are parametrised by the non-empty subsets $S$ of $\{0,\ldots,d-1\}$.
Define $C_{i,j}:= \zeta_d^{ij}$, for $0 \leq i,j \leq d-1$, where $\zeta_d = {\rm exp}(2\pi \sqrt{-1}/d)$.
Note that $C = (C_{i,j})_{0 \leq i,j \leq d-1}$ can be seen as the character table matrix of the cyclic group $\mathbb{Z}/ d\mathbb{Z}$. Then the solution of the $E$-system parametrised by the subset $S$ is given by
$$ x_{0,j} = \frac{1}{|S|} \sum_{i \in S} C_{i,j} =  \frac{1}{|S|} \sum_{i \in S} \zeta_d^{ij} \quad \text{ for } j=0,1,\ldots,d-1.$$

We fix a subset $S\subset\{0,\ldots,d-1\}$ and consider the solution $X_S = \{x_{0,0}, x_{0,1},\ldots,x_{0,d-1}\}$ of the $E$-system parametrised by $S$. Note that $E = \tr(e_1) = \frac{1}{|S|}$.

Now let $a\neq0$. 
The equations of  (\ref{E-syst}) are exactly the same regardless the value of $a$, as long as $a \neq 0$. We have the following linear system of equations (recall that $x_{0,-s}=x_{0,d-s}$ for all $s \in \{0,\ldots,d-1\}$):
$$\left(\begin{array}{lllll}
x_{0,0} & x_{0,d-1} & x_{0,d-2} & \ldots & x_{0,1} \\
x_{0,1} & x_{0,0} & x_{0,d-1} & \ldots & x_{0,2}\\
x_{0,2} & x_{0,1} & x_{0,0} & \ldots & x_{0,3}\\
\vdots & \vdots & \vdots & \ddots &\vdots\\
x_{0,d-1} & x_{0,d-2} & x_{0,d-3} & \ldots & x_{0,0}
\end{array}\right)
\left(\begin{array}{c}
x_{a,0}\\
x_{a,1}\\
x_{a,2}\\
\vdots\\
x_{a,d-1}
\end{array}\right) =
\frac{d}{|S|}
\left(\begin{array}{c}
x_{a,0}\\
x_{a,1}\\
x_{a,2}\\
\vdots\\
x_{a,d-1}
\end{array}\right).
$$
This is equivalent to the system
\begin{equation}\label{linsyst}
\left(  \sum_{i \in S}  \left(\begin{array}{lllll}
C_{i,0}  & C_{i,d-1} & C_{i,d-2}  & \ldots &C_{i,1} \\
C_{i,1}  & C_{i,0}  & C_{i,d-1}  & \ldots & C_{i,2} \\
C_{i,2}  & C_{i,1}  & C_{i,0} & \ldots & C_{i,3} \\
\vdots & \vdots & \vdots & \ddots &\vdots\\
C_{i,d-1}  & C_{i,d-2}  &C_{i,d-3}  & \ldots &  C_{i,0} 
\end{array}\right)\right)
\left(\begin{array}{c}
x_{a,0}\\
x_{a,1}\\
x_{a,2}\\
\vdots\\
x_{a,d-1}
\end{array}\right) =
d
\left(\begin{array}{c}
x_{a,0}\\
x_{a,1}\\
x_{a,2}\\
\vdots\\
x_{a,d-1}
\end{array}\right).
\end{equation}

Fix $i \in \{0,1,\ldots,d-1\}$.
We denote by $A_i$ the matrix
$$ \left(\begin{array}{lllll}
C_{i,0}  & C_{i,d-1} & C_{i,d-2}  & \ldots &C_{i,1} \\
C_{i,1}  & C_{i,0}  & C_{i,d-1}  & \ldots & C_{i,2} \\
C_{i,2}  & C_{i,1}  & C_{i,0} & \ldots & C_{i,3} \\
\vdots & \vdots & \vdots & \ddots &\vdots\\
C_{i,d-1}  & C_{i,d-2}  &C_{i,d-3}  & \ldots &  C_{i,0} 
\end{array}\right)\ .$$
If we take
$$x_{a,j} := C_{i,j} = \zeta_d^{ij} \quad \text{ for  } j=0,1,\ldots,d-1,$$
then
$$A_i\left(\begin{array}{c}
x_{a,0}\\
x_{a,1}\\
x_{a,2}\\
\vdots\\
x_{a,d-1}
\end{array}\right) =
d
\left(\begin{array}{c}
x_{a,0}\\
x_{a,1}\\
x_{a,2}\\
\vdots\\
x_{a,d-1}
\end{array}\right),
$$
while if we take
$$x_{a,j} := C_{i',j} =\zeta_d^{i'j} \quad \text{ for } i' \neq i, \,\,j=0,1,\ldots,d-1,$$
then
$$ 
A_i\left(\begin{array}{c}
x_{a,0}\\
x_{a,1}\\
x_{a,2}\\
\vdots\\
x_{a,d-1}
\end{array}\right) =
\left(\begin{array}{c}
0\\
0\\
0\\
\vdots\\
0
\end{array}\right).$$
Since the matrix $C$ is invertible (as the character table matrix of a finite group), its rows form a linear basis of $\cF_m^d$,
which can be written as $\{(C_{i,0}, C_{i,1},\ldots, C_{i,d-1} )\ |\ i=0,1,\dots,d-1\}$.  
If we denote by $V_{A_i}(d)$ (respectively  $V_{A_i}(0)$)  the eigenspace of $A_i$ with respect to the eigenvalue $d$ (respectively $0$), we have
$$V_{A_i}(d) = {\rm Span}_{\cF_m} (\{(C_{i,0}, C_{i,1},\ldots, C_{i,d-1} )\})$$ 
and
$$V_{A_i}(0) = {\rm Span}_{\cF_m} (\{(C_{i',0}, C_{i',1},\ldots, C_{i',d-1} )\,|\,{i' \neq i}\}).$$
Thus, in particular, we have
$${\rm dim}_{{\cF_m}}V_{A_i}(d) = 1 \quad \text{ and } \quad {\rm dim}_{{\cF_m}}V_{A_i}(0) =d-1,$$
and
$$ \cF_m^d = V_{A_i}(d) \oplus V_{A_i}(0).$$

Now, set $A_S:=\sum_{i\in S}A_i$. The solutions of the linear system (\ref{linsyst}) are the elements of the eigenspace $V_{A_S}(d)$   of $A_S$ with respect to the eigenvalue $d$.
Following the above discussion, it is straightforward to see that
$$V_{A_S}(d) = {\rm Span}_{\cF_m} (\{(C_{i,0}, C_{i,1},\ldots, C_{i,d-1} )\,|\,{i  \in S}\})$$ 
and
$$V_{A_S}(0) = {\rm Span}_{\cF_m} (\{(C_{i',0}, C_{i',1},\ldots, C_{i',d-1} )\,|\,{i' \notin S}\}).$$
Thus, in particular, we have
$${\rm dim}_{{\cF_m}}V_{A_S}(d) = |S| \quad \text{ and } \quad {\rm dim}_{{\cF_m}}V_{A_i}(0) =d-|S|,$$
and
$$ \cF_m^d = V_{A_S}(d) \oplus V_{A_S}(0).$$

To summarise:

\begin{prop}
We have 
$$\tr(ue_n) = \tr(u)\tr(e_n) \quad\ \text{for any $n\geq 1$ and all $u \in {\rm Y}(d,m,n)$\,,}$$
if and only if there exists a non-empty subset $S$ of $\{0,\ldots,d-1\}$ such that
$$ x_{0,j} =\frac{1}{|S|} \sum_{i \in S} C_{i,j} =  \frac{1}{|S|} \sum_{i \in S} \zeta_d^{ij} 
\quad \text{ for } j=0,1,\ldots,d-1,$$
and, for $a\neq 0$,
$$(x_{a,0}, x_{a,1},\ldots,x_{a,d-1}) \in  {\rm Span}_{\cR_m} (\{(C_{i,0}, C_{i,1},\ldots, C_{i,d-1} )\,|\,{i  \in S}\}).$$ 
\end{prop}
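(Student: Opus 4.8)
The plan is to combine the previous proposition, which shows that the affine E-condition is equivalent to the affine E-system $(\ref{E-syst})$, with the two ingredients already recalled before the statement: G\'erardin's classification of the solutions of the $E$-system (the $a=0$ part) and the spectral analysis of the circulant matrices $A_i$ and $A_S$. So the whole argument is a chain of equivalences: the affine E-condition holds $\Longleftrightarrow$ the parameters $x_{a,b}$ satisfy $(\ref{E-syst})$ for all $a\in E_m$ and $b\in\{0,\dots,d-1\}$ $\Longleftrightarrow$ the first condition of the statement (coming from $a=0$) and the second condition (coming from $a\neq 0$) both hold.

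First I would isolate the equations of $(\ref{E-syst})$ with $a=0$. These involve only the unknowns $x_{0,1},\dots,x_{0,d-1}$ (recall $x_{0,0}=1$) and form exactly the $E$-system; by G\'erardin's classification (Appendix of \cite{jula2}) their solutions are precisely the ones parametrised by the non-empty subsets $S\subseteq\{0,\dots,d-1\}$, namely $x_{0,j}=\frac{1}{|S|}\sum_{i\in S}\zeta_d^{ij}$, and for such a solution $E=\tr(e_1)=\frac{1}{|S|}$. This is the first assertion of the proposition.

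Next, fixing such an $S$ (hence the values $x_{0,0},\dots,x_{0,d-1}$ and $E=1/|S|$), the remaining equations of $(\ref{E-syst})$ are, for each fixed $a\in E_m\setminus\{0\}$, one and the same homogeneous linear system, namely $(\ref{linsyst})$, i.e. $A_S\,\mathbf{x}_a=d\,\mathbf{x}_a$ with $\mathbf{x}_a$ the column vector of entries $x_{a,0},\dots,x_{a,d-1}$ and $A_S=\sum_{i\in S}A_i$. The key computation, essentially carried out in the discussion preceding the statement, is that each circulant matrix $A_i$ acts diagonally on the rows of the character table matrix $C$: for $\mathbf{v}=(C_{i',0},\dots,C_{i',d-1})$ the orthogonality relation $\sum_{k=0}^{d-1}\zeta_d^{(i'-i)k}=d\,\delta_{i,i'}$ gives $A_i\mathbf{v}=d\,\mathbf{v}$ if $i'=i$ and $A_i\mathbf{v}=0$ otherwise. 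Summing over $i\in S$ yields $V_{A_S}(d)=\mathrm{Span}_{\cF_m}\{(C_{i,0},\dots,C_{i,d-1})\mid i\in S\}$, $V_{A_S}(0)=\mathrm{Span}_{\cF_m}\{(C_{i',0},\dots,C_{i',d-1})\mid i'\notin S\}$ and $\cF_m^d=V_{A_S}(d)\oplus V_{A_S}(0)$, so the solutions of $(\ref{linsyst})$ over $\cF_m$ are exactly the vectors of $V_{A_S}(d)$.

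Finally, since $A_S-dI$ has entries in $\C\subseteq\cR_m$ and $V_{A_S}(d)$ admits a basis (the rows of $C$ indexed by $S$) with entries in $\C$, flatness of $\cR_m$ over $\C$ shows that the solutions $\mathbf{x}_a\in\cR_m^d$ of $(\ref{linsyst})$ are precisely the $\cR_m$-linear combinations of those rows; this is the second assertion, and combining the two assertions completes the proof. The only genuinely non-routine input is G\'erardin's theorem, which we quote; I expect the main (still elementary) point to be the verification of the eigenvector identity for $A_i$, with the direct-sum decomposition and the descent from $\cF_m$ to $\cR_m$ being short bookkeeping.
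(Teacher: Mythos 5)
Your proposal is correct and follows essentially the same route as the paper: reduce the affine E-condition to the system (\ref{E-syst}), invoke G\'erardin's classification for the $a=0$ equations, and solve the $a\neq 0$ equations by diagonalising the circulant matrices $A_i$ on the rows of the character table matrix $C$. Your extra remark justifying the passage from the $\cF_m$-eigenspace $V_{A_S}(d)$ to the $\cR_m$-span is a small refinement of a point the paper leaves implicit, not a different argument.
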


\begin{rem} {\rm 
Note that when $S =\{0,\ldots,d-1\}$, we have
$$x_{0,j} =0 \quad \text{ for } j=1,\ldots,d-1\,,$$ 
and
$$V_{A_S}(d) = {\rm Span}_{\cF_m} (\{(C_{i,0}, C_{i,1},\ldots, C_{i,d-1} )\,|\,{i  \in S}\}) = \cF_m^d.$$ 
Thus, for each $a \neq 0$, $(x_{a,0}, x_{a,1},\ldots,x_{a,d-1})$ is an arbitrary vector of $\cR_m^d$.

On the other hand, if we take a solution of the affine $E$-system corresponding to a singleton subset $S=\{i\}\subset\{0,\ldots,d-1\}$, then we have
$$x_{0,1} = \zeta_d^i\,,$$ $$x_{0,j} = x_{0,1}^j \quad \text{ for } j=0,1,\ldots,d-1\,,$$ 
and, for $a\neq 0$,
$$(x_{a,0}, x_{a,1},\ldots,x_{a,d-1}) = \lambda_a \,(x_{0,0}, x_{0,1},\ldots,x_{0,d-1}) \quad \text{ for some } \lambda_a \in {\cR_m}\,.$$ 
}
\end{rem}

\section{The Markov trace with zero parameters}\label{sec-sym}

In this section, we only consider again the cyclotomic Yokonuma--Hecke algebra ${\rm Y}(d,m,n)$, that is, we take $m < \infty$.
We will study the Markov trace on ${\rm Y}(d,m,n)$ with all parameters equal to $0$ and show that it generalises both the canonical symmetrising trace defined by Bremke and Malle \cite{BM,MaMa,GIM} 
on the Ariki--Koike algebra (case $d=1$) and the canonical symmetrising trace defined in \cite{ChPo} on the Yokonuma--Hecke algebra  (case $m=1$).
We will then determine the weights of this Markov trace by expressing them in terms of Schur elements for Ariki--Koike algebras.

\subsection{Markov trace on ${\rm Y}(d,m,n)$ with zero parameters} From now on we denote by $\btau$ the unique Markov trace on ${\rm Y}(d,m,n)$ with parameters $z=0$ and $x_{a,b}=\delta_{a,0}\delta_{b,0}$, for any $a,b\in E_m$, given by Proposition \ref{prop-tau}.

Recall the basis $\mathcal{B}^{\text{Ind}}_{d,m,n}$ of  ${\rm Y}(d,m,n)$ studied in Section \ref{sec-base}. The set of elements $\mathcal{B}^{\text{Ind}}_{d,m,n}$ is defined recursively by $\mathcal{B}^{\text{Ind}}_{d,m,0}:=\{1\}$ and, for $k=1,\dots,n$, by
\begin{equation}\label{Bind1}
\mathcal{B}^{\text{Ind}}_{d,m,k}:=\{W^{(k)}_{J,a,b}\,w\ \ |\ \ J\in\{0,\ldots,k-1\}\,,\ a\in E_m\,,\ b\in\{0,\dots,d-1\}\,,\ w\in \mathcal{B}^{\text{Ind}}_{d,m,k-1}\}\ ,
\end{equation}
where
$$W^{(k)}_{J,a,b}=g_{J}^{-1}\ldots g_2^{-1}g_1^{-1}X_1^at_1^b\,g_1g_2\ldots g_{k-1}.$$ By (\ref{form-tau}) and Proposition \ref{prop-tau}, we have that the Markov trace $\btau$ is given on the basis $\mathcal{B}^{\text{Ind}}_{d,m,n}$ by the following initial condition and recursive formula:
\begin{equation}\label{btau-Bind1}
\btau(1)=1\ \ \ \ \ \ \ \text{and}\ \ \ \ \ \ \ \btau\bigl(W^{(k)}_{J,a,b}\,w\bigr)= \delta_{J,k-1}\delta_{a,0}\delta_{b,0}\,\btau(w)\ \ \ \text{for $k=1,\ldots,n$\,,}
\end{equation}
where $J\in\{0,\ldots,k-1\}$, $a \in E_m$, $b\in\{0,\ldots,d-1\}$ and $w\in\mathcal{B}^{\text{Ind}}_{d,m,k-1}$.

Let $\mathcal{B}_{d,n}$ be an $\cR_m$-basis of the Yokonuma--Hecke algebra ${\rm Y}_m(d,n)$ and recall that, by Theorem \ref{theo-bases}, the set
$$\mathcal{B}^{\text{AK}}_{d,m,n} = \{X_1^{a_1}\ldots X_n^{a_n}\cdot\omega\,|\,a_1,\ldots,a_n\in E_m,\,\omega\in\mathcal{B}_{d,n}\}$$
is also an $\cR_m$-basis of the cyclotomic Yokonuma--Hecke algebra ${\rm}Y(d,m,n)$. Using for $\mathcal{B}_{d,n}$ the canonical basis of the Yokonuma--Hecke algebra ${\rm Y}_m(d,n)$ given by Example \ref{exmp of basis},
we obtain that the set
\begin{equation}\label{symbasis}
\mathcal{B}_n:=\mathcal{B}^{\text{AK,can}}_{d,m,n}= \{X_1^{a_1}\ldots X_n^{a_n}\,t_1^{b_1}\ldots t_n^{b_n}\,g_w\ |\ a_1,\ldots,a_n\in E_m\,,b_1,\dots,b_n\in \{0,\dots,d-1\}\,,\ w\in\mathfrak{S}_n\}
\end{equation}
is an $\cR_m$-basis of ${\rm}Y(d,m,n)$.

\begin{prop}\label{prop-btau}
The Markov trace $\btau$ is given on the basis $\mathcal{B}_n$ by:
\begin{equation}\label{form-btau2}
\btau(X_1^{a_1}\ldots X_n^{a_n}\,t_1^{b_1}\ldots t_n^{b_n}\, g_w)=\left\{\begin{array}{ll}1, & \text{if }\,\,w=1\,\,\text{ and }\,\, a_1=\dots=a_n=b_1=\dots=b_n=0\,;\\[0.2em]
0, & \text{otherwise.}\end{array}\right.
\end{equation}
\end{prop}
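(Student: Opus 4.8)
The plan is to prove Proposition \ref{prop-btau} by expressing each basis element of $\mathcal{B}_n$ in terms of the inductive basis $\mathcal{B}^{\text{Ind}}_{d,m,n}$, on which the value of $\btau$ is already completely described by the recursive formula \eqref{btau-Bind1}. The starting observation is that, by \eqref{btau-Bind1}, $\btau$ kills every element $W^{(k)}_{J,a,b}\,w$ of $\mathcal{B}^{\text{Ind}}_{d,m,k}$ unless $(J,a,b)=(k-1,0,0)$, and when $(J,a,b)=(k-1,0,0)$ we have $W^{(k)}_{k-1,0,0}=g_{k-1}$, so $\btau(g_{k-1}w)=\btau(w)$. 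Iterating, $\btau$ vanishes on any inductive-basis monomial in which some ``floor'' $W^{(k)}$ is not of the form $g_{k-1}$, and equals $1$ on the unique monomial which is the product of all the $g_{k-1}$'s, i.e.\ on a certain lift of the longest element --- but in fact the relevant point is simpler: the only inductive-basis element on which $\btau$ is nonzero and which lies in $\cR_m\cdot g_w$ modulo lower terms is the one with all $a_k=b_k=0$ and all $J_k=k-1$.

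First I would handle the case $w=1$, $a_1=\dots=a_n=b_1=\dots=b_n=0$: here the basis element is $1$, and $\btau(1)=1$ by \eqref{btau-Bind1} (or directly by $x_{0,0}=1$). For the remaining cases I would argue by induction on $n$. The key structural input is \eqref{Markov3} of Definition \ref{def-Markov} together with the relation $\widetilde X_n^{a}t_n^{b}=W^{(n)}_{n-1,a,b}$ noted in Subsection \ref{subsec-Markov}, and the fact that $\btau$ is a trace \eqref{Markov1}. Given a basis element $Y=X_1^{a_1}\ldots X_n^{a_n}\,t_1^{b_1}\ldots t_n^{b_n}\,g_w$ with not all exponents zero or $w\neq 1$, I would like to ``peel off'' the $n$-th strand. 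If $w$ does not fix $n$, write $w=w's_{n-1}$ with $\ell(w')<\ell(w)$ in a reduced form adapted so that $g_w=g_{w'}g_{n-1}$; then using the trace property and \eqref{Markov2} one reduces $\btau(Y)$ to $z$ times a trace on ${\rm Y}(d,m,n-1)$, which is $0$ since $z=0$ --- more precisely one rewrites $Y$ to bring a single $g_{n-1}$ to the front of something in ${\rm Y}(d,m,n-1)$, possibly after moving the $X_i$'s and $t_i$'s past $g_{n-1}$ using \eqref{g-X}, \eqref{X-X}, \eqref{conjug} and \eqref{formAK}; all the correction terms produced by \eqref{formAK} are proportional to $(q-q^{-1})e_i$, hence still lie in lower filtration and, critically, still carry a factor that the induction or the vanishing of $z$ disposes of. If instead $w$ fixes $n$, then $g_w\in{\rm Y}(d,m,n-1)$ and $a_n,b_n$ cannot both be zero (else we are in a strictly smaller situation); then $Y=\bigl(X_1^{a_1}\ldots X_{n-1}^{a_{n-1}}t_1^{b_1}\ldots t_{n-1}^{b_{n-1}}g_w\bigr)\cdot X_n^{a_n}t_n^{b_n}$, but $X_n^{a_n}t_n^{b_n}$ is not literally $\widetilde X_n^{a_n}t_n^{b_n}$, so I would first convert $X_n^{a_n}$ into an $\cR_m$-combination of $\widetilde X_n^{a}$-terms times elements of ${\rm Y}(d,m,n-1)$ (this is exactly the content of the rewriting \eqref{base}, i.e.\ Theorem \ref{theo-bases} applied with $\widetilde X$); then \eqref{Markov3} with $x_{a,b}=\delta_{a,0}\delta_{b,0}$ forces $\btau(Y)=0$ because the surviving $\widetilde X_n^{0}t_n^{0}$ coefficient turns out to be $0$ when $(a_n,b_n)\neq(0,0)$.

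The cleanest way to organize all of this, and the route I would actually take in the write-up, is to avoid delicate rewriting by instead showing directly that every element of $\mathcal{B}_n$ with $(w,a_\bullet,b_\bullet)\neq(1,0,0)$ expands, in the inductive basis $\mathcal{B}^{\text{Ind}}_{d,m,n}$, as an $\cR_m$-linear combination of monomials $W^{(n)}_{J_n,a_n,b_n}\cdots W^{(1)}_{J_1,a_1,b_1}$ in each of which at least one factor $W^{(k)}_{J_k,a_k,b_k}$ has $(J_k,a_k,b_k)\neq(k-1,0,0)$; by \eqref{btau-Bind1} every such monomial is killed by $\btau$, giving $\btau(Y)=0$. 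To see this expansion property I would use the change-of-basis from $\mathcal{B}^{\text{AK}}_{d,m,n}$ (equivalently $\mathcal{B}_n$) to $\mathcal{B}^{\text{Ind}}_{d,m,n}$, which is governed by formulas \eqref{mult-t}--\eqref{mult-X2}: an induction on $n$ shows that $X_1^{a_1}\ldots X_n^{a_n}t_1^{b_1}\ldots t_n^{b_n}g_w$ with $w=1$ and all exponents zero is the only element whose $\mathcal{B}^{\text{Ind}}$-expansion contains the ``all-$g_{k-1}$'' monomial (with coefficient $1$), and every other $\mathcal{B}_n$-element expands purely into ``$\btau$-null'' monomials. I expect the main obstacle to be bookkeeping in this last step --- keeping track, through the inductive unwinding and through the correction terms generated by \eqref{formAK}, \eqref{mult-g}, \eqref{mult-X}, that no ``all-$g_{k-1}$'' monomial can ever appear --- rather than any conceptual difficulty; alternatively one sidesteps it entirely by the trace-property argument of the previous paragraph, whose only subtle point is the conversion of $X_n^{a_n}$ into $\widetilde X_n$-terms before applying \eqref{Markov3}. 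Either way, combining the two cases ($w$ moves $n$: kill via $z=0$; $w$ fixes $n$ with $(a_n,b_n)\neq(0,0)$: kill via $x_{a,b}=\delta_{a,0}\delta_{b,0}$; otherwise induct on $n-1$) yields \eqref{form-btau2}.
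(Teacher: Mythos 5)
Your overall strategy coincides with the paper's: induction on $n$, peeling off the $n$-th strand via centrality of $\btau$, killing the contributions where $w$ moves $n$ through $z=0$, and the contributions with $(a_n,b_n)\neq(0,0)$ through $x_{a,b}=\delta_{a,0}\delta_{b,0}$. The genuine gap sits exactly at the point you flag and then wave away: the claim that for $a_n>0$ one has $\btau\bigl(X_n^{a_n}t_n^{b_n}u\bigr)=0$ for $u\in{\rm Y}(d,m,n-1)$ because ``the surviving $\widetilde X_n^{0}t_n^{0}$ coefficient turns out to be $0$''. Since $X_n^{a}\neq\widetilde X_n^{a}$, condition (\ref{Markov3}) does not apply, and one must control the expansion of $X_n^{a}t_n^{b}u$ in the basis (\ref{base}); the dangerous terms are those lying in $W^{(n)}_{n-1,0,0}\cdot{\rm Y}(d,m,n-1)={\rm Y}(d,m,n-1)$, which $\btau$ does \emph{not} kill. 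Proving that no such terms occur is the heart of the proposition: the paper introduces the filtration $K_s={\rm Span}_{\cR_m}\{W^{(n)}_{J',a',b'}u'\ |\ a'\in\{1,\dots,s\}\}$, shows $X_n\cdot K_s\subset K_{s+1}$ via (\ref{mult-t})--(\ref{mult-X}), and settles the base case $X_nt_n^{b}u\in K_1$ by the explicit rewriting (\ref{change-basis}) of $g_{n-1}\cdots g_1X_1t_1^{b}g_1\cdots g_{n-1}$. None of this is routine bookkeeping, and none of it appears in your proposal (your second route merely restates the needed fact as ``no surviving monomial appears''). The same unproven lemma is also what your case ``$w$ does not fix $n$'' secretly relies on: the correction terms produced when you push $X_n^{a_n}$ past $g_{n-1}$ via (\ref{formAK}) are proportional to $(q-q^{-1})e_{n-1}X_{n-1}^{a-k}X_n^{k}$ times elements of ${\rm Y}(d,m,n-1)$; they carry no factor of $g_{n-1}$ and still contain positive powers of $X_n$, so they are not disposed of by $z=0$ but precisely by the case $J=n-1$, $a>0$ above. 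This is how the paper's case ``$a>0$ and $J<n-1$'' reduces to the hard case, and it must be argued, not assumed.

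A smaller but real slip at the outset: $W^{(k)}_{k-1,0,0}=g_{k-1}^{-1}\cdots g_1^{-1}\,g_1\cdots g_{k-1}=1$, not $g_{k-1}$, so the unique inductive-basis monomial with nonzero trace is the identity (cf.\ Remark \ref{rem-btau}), and $\btau(g_{k-1}w)=\btau\bigl(W^{(k)}_{k-2,0,0}w\bigr)=z\,\btau(w)=0$ rather than $\btau(w)$. Your later summary of which monomial survives is the correct one, but the ``product of all the $g_{k-1}$'s / lift of the longest element'' description is wrong and propagates into the formulation of your second route.
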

\begin{proof}
We will prove (\ref{form-btau2}) by induction on $n$.
For $n=1$, Formula (\ref{form-btau2}) is the same as Formula (\ref{btau-Bind1}).

Let $n>1$. 
Following a standard fact about reduced expressions in the symmetric group $\mathfrak{S}_n$,
any $g_w$, with $w\in\mathfrak{S}_n$, can be written uniquely as $g_{J+1}\dots g_{n-1}\cdot g_{w'}$ for some $J\in\{0,\dots,n-1\}$ and $w'\in\mathfrak{S}_{n-1}$
(for $J=n-1$, $g_w=g_{w'}$). 
By the centrality of $\btau$ and (\ref{X-X}), 
the left-hand side of Formula (\ref{form-btau2}) 
is equal to
\[\btau(X_n^{a_n}\,t_n^{b_n}\,g_{J+1}\dots g_{n-1}\cdot g_{w'}X_1^{a_1}\ldots X_{n-1}^{a_{n-1}}t_1^{b_1}\ldots t_{n-1}^{b_{n-1}}),\]
where $g_w=g_{J+1}\dots g_{n-1}\cdot g_{w'}$. By the induction hypothesis and the centrality of $\btau$, we have that
\[\btau(g_{w'}X_1^{a_1}\ldots X_{n-1}^{a_{n-1}}t_1^{b_1}\ldots t_{n-1}^{b_{n-1}})=\left\{\begin{array}{ll}1, & \text{if }\,\,w'=1\,\,\text{ and }\,\, a_1=\dots=a_{n-1}=b_1=\dots=b_{n-1}=0\,;\\[0.2em]
0, & \text{otherwise.}\end{array}\right.\]
So it is enough to prove that, for any $a\in E_m$, $b\in\{0,\dots,d-1\}$ and $u \in {\rm}Y(d,m,n-1)$, we have
\begin{equation}\label{form-btau3}
\btau(X_n^{a}\,t_n^{b}\,g_{J+1}\dots g_{n-1}\cdot u)=\delta_{a,0}\,\delta_{b,0}\,\delta_{J,n-1}\btau(u)\ .
\end{equation}

$ $
\paragraph{\underline{Case $a=0$}} If $J=n-1$, then Formula (\ref{form-btau3}) is a particular case of (\ref{btau-Bind1}) (recall that $t_n^b=W^{(n)}_{n-1,0,b}$). If $J<n-1$, then we have, by (\ref{btau-Bind1}),
\[\btau(t_n^{b}\,g_{J+1}\dots g_{n-1}\cdot u)=\btau(g_{J+1}\dots g_{n-1}\cdot t_{n-1}^{b}u)=\btau(W^{(n)}_{J,0,0}\cdot t_{n-1}^{b}u)=0\ .\]

$ $
\paragraph{\underline{Case $a>0$ and $J=n-1$}}
For any $s\in\{1,\dots,m-1\}$, we define
\[K_s:=\text{Span}_{\cR_m}\{W^{(n)}_{J',a',b'}u'\ \,|\,\ J'\in\{0,\dots,n-1\},\ a'\in\{1,\dots,s\},\ b'\in\{0,\dots,d-1\},\ u'\in {\rm}Y(d,m,n-1)\}\ .\]
Due to the condition $a'\in\{1,\dots,s\}$, we have from (\ref{btau-Bind1}) that $\btau(x)=0$ for any $x\in K_s$, for all $s=1,\dots,m-1$. 
Thus, if we prove that 
\begin{equation}\label{K_a}
 X_n^{a}t_n^{b}\,u \in K_a \ ,
 \end{equation}
then we obtain $\btau(X_n^{a}t_n^{b}\,u)=0$, as desired. 

Now, note that, due to  Formulas (\ref{mult-t})--(\ref{mult-X}), we have, for all $s=1,\ldots, m-2$, 
\begin{center}
$x\cdot K_s\subset K_s$ \,\, for any $x\in\{t_1,\dots,t_n,g_1,\dots,g_{n-1}\}$ \,\, and \,\, $X_1\cdot K_s\subset K_{s+1}$\ ,
\end{center}
whence we deduce that
$X_n\cdot K_s\subset K_{s+1}.$
In particular, we have
$X_n^{a-1} \cdot K_1\subset K_{a}.$
So, in order to prove (\ref{K_a}), it is enough to show that $X_nt_n^{b}\,u\in K_1$. 

The assertion $X_nt_n^{b}\,u\in K_1$ follows from the following formula (for $L=n-1$):
\begin{equation}\label{change-basis}
g_{L}\ldots g_2g_1X_1t_1^b\,g_1g_2\ldots g_{n-1}=W^{(n)}_{L,1,b}+\sum_{i=0}^{L-1}\alpha_i\,W^{(n)}_{i,1,b_i}\,u_i\ \ \ \ \ \ \ \text{for any $L\in\{0,\dots,n-1\}$\,,}
\end{equation}
where $\alpha_i\in\cR_m$, $b_i\in\Z$ and $u_i\in{\rm Y}(d,m,n-1)$, for any $i\in\{0,\dots,L-1\}$. Formula (\ref{change-basis}) is trivially satisfied if $L=0$. Assume that $L>0$. Then, using that $g_L=g_L^{-1}+(q-q^{-1})e_L$, we write
\[g_{L}\ldots g_1X_1t_1^b\,g_1\ldots g_{L-1}=g_{L}^{-1}g_{L-1}\ldots g_1X_1t_1^b\,g_1\ldots g_{n-1}+(q-q^{-1})\frac{1}{d}\sum_{s=0}^{d-1}g_{L-1}\ldots g_1X_1t_1^{b-s}g_1\ldots g_{n-1}\,t_L^s\ .\]
Formula (\ref{change-basis}) follows by induction on $L$, using that 
\begin{center}
$g_L^{-1}W^{(n)}_{i,1,b_i}=W^{(n)}_{i,1,b_i}g_{L-1}^{-1}$  \quad for $i=0,\dots,L-2$.
\end{center}

$ $
\paragraph{\underline{Case $a>0$ and $J<n-1$}} We use induction on $a$, the case $a=0$ being already checked. By (\ref{g-X}) and (\ref{noname1}), 
we have respectively that $X_{n}$ commutes with $g_{J+1}\dots g_{n-2}$ and that 
$X_ng_{n-1}=g_{n-1}X_{n-1}+(q-q^{-1})e_{n-1}X_n$.
So we obtain :
$$\begin{array}{rcl}
X_n^{a}\,t_n^{b}\,g_{J+1}\dots g_{n-1}\cdot u &= &X_n^{a-1}t_{n}^bg_{J+1}\dots g_{n-2} \,(g_{n-1}X_{n-1}+(q-q^{-1})e_{n-1}X_n)\,u \\ & &\\
 & = &\displaystyle X_n^{a-1}t_{n}^bg_{J+1}\dots g_{n-1} X_{n-1}u+(q-q^{-1})\frac{1}{d} \sum_{s=0}^{d-1}X_n^a  t_{n}^{b-s}  g_{J+1}\dots g_{n-2}t_{n-1}^s u
\end{array}$$
By the induction hypothesis, we have $\btau(X_n^{a-1}t_{n}^bg_{J+1}\dots g_{n-1}X_{n-1}u)=0$, since $X_{n-1}u\in{{\rm Y}(d,m,n-1)}$. Moreover, using the already proved case 
``$a >0$ and $J=n-1$'', we have $\btau(X_n^a  t_{n}^{b-s}  g_{J+1}\dots g_{n-2}t_{n-1}^s u)=0$ for all $s=0,\ldots,d-1$, since $g_{J+1}\dots g_{n-2}t_{n-1}^su\in{\rm Y}(d,m,n-1)$.
\end{proof}

\begin{rem}\label{rem-btau}{\rm
Note that the unit element of ${\rm Y}(d,m,n)$ belongs to both  $\mathcal{B}^{\text{Ind}}_{d,m,n}$ and  $\mathcal{B}_n$. We have
\[\btau(1)=1\ \ \ \ \ \text{and}\ \ \ \ \ \ \btau(b)=0\ \ \ \ \text{for any $b\in\mathcal{B}^{\text{Ind}}_{d,m,n}\backslash\{1\}$ (respectively for any $b\in\mathcal{B}_n\backslash\{1\}$)\,.}\]
This follows immediately from (\ref{btau-Bind1}) (respectively from (\ref{form-btau2})).\hfill$\triangle$
}\end{rem}

\subsection{Schur elements for $\cF_m{\rm Y}(d,m,n)$}
Recall that $\mathcal{P}(d,m,n)$ denotes the set of all $(d,m)$-partitions of size $n$. 
By Proposition \ref{s-s}, the algebra ${\rm Y}(d,m,n)$ is semisimple over $\cF_m$ and the set $\{V_{\blambda}\}_{\blambda  \in \mathcal{P}(d,m,n)}$  is a complete set of pairwise non-isomorphic irreducible representations of $\cF_m{\rm Y}(d,m,n)$. The algebra $\cF_m{\rm Y}(d,m,n)$ is also split, following the formulas for the representations $V_{\blambda}$ given by Proposition \ref{prop-rep}. We denote by $\chi_{\blambda}$ the character of the irreducible representation $V_{\blambda}$.

We extend $\btau$ linearly over $\cF_m$ to $\cF_m{\rm Y}(d,m,n)$. 
The map $\btau$ is a symmetrising trace on $\cF_m{\rm Y}(d,m,n)$, that is, $\btau$ satisfies the following two conditions: 
\begin{enumerate}[(i)]
\item $\btau(YZ) = \btau(ZY)$ for all $Y,Z \in \cF_m{\rm Y}(d,m,n)$, and \smallbreak
\item the bilinear form $\cF_m{\rm Y}(d,m,n) \times \cF_m{\rm Y}(d,m,n) \rightarrow \cF_m,\,(X,Y) \mapsto \btau(XY)$ is non-degenerate. \smallbreak
\end{enumerate}
Conditon (i) is satisfied since $\btau$ is a Markov trace, while Condition (ii)  is true because, for $q=1$ and $v_l = {\rm exp}(2\pi l\sqrt{-1} /m) $, 
the trace $\btau$ specialises to the canonical  symmetrising trace on the group algebra of 
 ${(\Z /d \Z\times\Z /m \Z) \wr \mathfrak{S}_n}$
over $\cF_m$, and is thus non-degenerate.
We then have that $\btau$ is written uniquely as a linear combination of the irreducible characters of $\cF_m{\rm Y}(d,m,n)$ with non-zero coefficients (``weights'').
We have
$$\btau=\sum_{\blambda \in \mathcal{P}(d,m,n)} \frac{1}{s_{\blambda}} \, \chi_{\blambda}\,,$$
where $s_{\blambda} \in \cF_m$ is called the \emph{Schur element of $V_{\blambda}$ with respect to $\btau$}. 

\begin{rem}\label{cansymtr}
{\rm  The map $\btau$ is known to be a symmetrising trace on ${\rm Y}(d,m,n)$ (defined over $\cR_m$) in cases $d=1$ \cite{MaMa}
and $m=1$ \cite{ChPo}.
In these cases, $\btau$ is called the  ``canonical'' symmetrising trace on ${\rm Y}(d,m,n)$. 
\hfill $\triangle$}
\end{rem}

\subsubsection{Schur elements and primitive idempotents}
For any $\blambda \in \mathcal{P}(d,m,n)$, denote by $d_{\blambda}$ the dimension of the representation $V_{\blambda}$. 
We fix the basis $\{\bv_{_{\cT}}\}$ of $V_{\blambda}$ used in Proposition \ref{prop-rep} and use it to identify $\text{End}_{\cF_m}(V_{\blambda})$ with the matrix algebra ${\rm Mat}_{d_{\blambda}}(\cF_m)$ over  $\cF_m$. Since $\cF_m{\rm Y}(d,m,n)$ is split semisimple, it follows from the Artin--Wedderburn theorem that there exists an isomorphism
\begin{equation}\label{WA}I: \cF_m{\rm Y}(d,m,n) \rightarrow \prod_{\blambda \in   \mathcal{P}(d,m,n)} {\rm Mat}_{d_{\blambda}}(\cF_m).\end{equation}
We write $I_{\blambda}$ for the projection of $I$ onto the $\blambda$-factor, that is,
$$I_{\blambda}: \cF_m{\rm Y}(d,m,n) \twoheadrightarrow {\rm Mat}_{d_{\blambda}}(\cF_m).$$
Let $\cT$ be a standard $(d,m)$-tableau of shape $\blambda$. Since $I$ is an isomorphism, there exists a unique element
 $E_{_{\cT}}$  of $\cF_m{\rm Y}(d,m,n)$ that satisfies:
$$I_{\bmu}(E_{_{\cT}})=
\left\{ \begin{array}{ll}
0 & \text{ if } \blambda \neq \bmu \,;\\
P_{\bv_{_{\cT}}}& \text{ if } \blambda = \bmu\,,
\end{array}\right.$$
where 
$P_{\bv_{_{\cT}}}$ stands for the projection onto $\cF_m\bv_{_{\cT}}$, that is, $P_{\bv_{_{\cT}}}$ is 
the diagonal $d_{\blambda} \times d_{\blambda}$ matrix with coefficient $1$ in the column labelled by $\bv_{_{\cT}}$, and $0$ everywhere else.
Then we have
\begin{equation}\label{schur}
\btau(E_{_{\cT}}) = \frac{1}{s_{\blambda}}.
\end{equation}
We will use the above formula in order to calculate the Schur elements for $\cF_m{\rm Y}(d,m,n)$.  

\vskip 0.2cm
The set $\{P_{\bv_{_{\cT}}}\}$, where $\bv_{_{\cT}}$ runs over the basis vectors of $V_{\blambda}$, is a complete set of pairwise orthogonal primitive idempotents of ${\rm Mat}_{d_{\blambda}}(\cF_m)$. 
Thus, the element  $E_{_{\cT}}$ is a primitive idempotent of $\cF_m{\rm Y}(d,m,n)$ and
the set $\{ E_{_{\cT}}\}$, where $\cT$ runs over the set of standard $(d,m)$-tableaux of size $n$,  is a complete set of pairwise orthogonal primitive idempotents of $\cF_m{\rm Y}(d,m,n)$.

\subsubsection{Formulas for the idempotents $ E_{_{\cT}}$}
Let $\gad=\{\xi_1,\dots,\xi_d\}$ be the set of all $d$-th roots of unity (ordered arbitrarily).  The elements $t_1,\dots,t_n,X_1,\dots,X_n$ are represented by diagonal matrices in the basis $\{\bv_{_{\cT}}\}$ of $V_{\blambda}$ indexed by the standard $(d,m)$-tableaux of shape $\blambda$ (see Formulas (\ref{rep-t}) and (\ref{rep-Xi})). Moreover, the eigenvalues of the set $\{t_1,\dots,t_n,X_1,\dots,X_n\}$ allow to distinguish between all basis vectors $\bv_{_{\cT}}$ of all representations $V_{\blambda}$, with $\blambda \in \mathcal{P}(d,m,n)$.  This follows immediately from the fact that any standard $(d,m)$-tableau is uniquely determined by its sequence of content arrays, see (\ref{T-cont}).
Thus, for $\cT$ a standard $(d,m)$-tableau of size $n$, we can express the primitive idempotent $E_{_{\cT}}$ of $\cF_m{\rm Y}(d,m,n)$ in terms of the elements $t_1,\dots,t_n, X_1,\dots,X_n$ as follows:

For $i=1,\dots,n$, let $\btheta_i$ be the  $(d,m)$-node of $\cT$ with the number $i$ in it. In order to symplify notation, we set $\pos_i:=\pos^{(d)}(\btheta_i)$ and $\cc_i:=\cc(\btheta_i)$ for $i=1,\dots,n$. As the $(d,m)$-tableau $\cT$ is standard, the  $(d,m)$-node $\btheta_n$ is removable. Let ${\mathcal{U}}$ be the standard  $(d,m)$-tableau obtained from $\cT$ by removing the  $(d,m)$-node $\btheta_n$ and let $\bmu_{n-1}$ be the shape of ${\mathcal{U}}$.
The inductive formula for $E_{_{\cT}}$ in terms of the elements $t_1,\dots,t_n, X_1,\dots,X_n$ reads:
\begin{equation}\label{idem-JM}
E_{_{\cT}}=E_{_{{\mathcal{U}}}}\prod_{ \begin{array}{l}\scriptstyle{\btheta\in 
{\mathcal{E}}_+(\bmu_{n-1})}\\\scriptstyle{\cc(\btheta)\neq \cc_n}\end{array}}\hspace{-0.2cm} \frac{X_n-\cc(\btheta)}{\cc_n-\cc(\btheta)}\prod_{ \begin{array}{l}\scriptstyle{\btheta\in 
{\mathcal{E}}_+(\bmu_{n-1})}\\\scriptstyle{\pos^{(d)}(\btheta)\neq \pos_n}\end{array}  }\hspace{-0.2cm}\frac{t_n-\xi_{\pos^{(d)}(\btheta)}}{\xi_{\pos_n}-\xi_{\pos^{(d)}(\btheta)}}\ ,
\end{equation}
with $E_{_{\cT_0}}=1$ for the unique $(d,m)$-tableau $\cT_0$ of size $0$. Note that, due to the commutativity of the elements $t_1,\dots,t_n, X_1,\dots,X_n$, all terms in the above formula commute with each other.
Further, we have
\begin{equation}\label{diag-mat}
t_i E_{_{\cT}} = E_{_{\cT}} t_i = \xi_{\pos_i}E_{_{\cT}} \quad \text{and} \quad X_i E_{_{\cT}} = E_{_{\cT}} X_i =  \cc_i E_{_{\cT}} \ , \quad \text{ for all } i=1,\ldots,n.
\end{equation}

In Formula (\ref{idem-JM}), we consider the idempotent $E_{_{{\mathcal{U}}}}$ of $\cF_m{\rm Y}(d,m,n-1)$ as an element of $\cF_m{\rm Y}(d,m,n)$ thanks to the chain property (\ref{chainproperty}) of the algebras ${\rm Y}(d,m,n)$. In fact,
seeing $E_{_{{\mathcal{U}}}}$ as an element of $\cF_m{\rm Y}(d,m,n)$, we have
\begin{equation}\label{EU-ET}E_{_{{\mathcal{U}}}} = \sum_{ \begin{array}{c}\scriptstyle{\btheta \in 
{\mathcal{E}}_+(\bmu_{n-1})}\end{array}}\hspace{-0.2cm} 
E_{_{\mathcal{U} \cup \{\btheta\}}}\,\,\,,\end{equation}
where, for any $\btheta \in {\mathcal{E}}_+(\bmu)$, $\mathcal{U} \cup \{\btheta\}$ is the standard  $(d,m)$-tableau obtained from  $\mathcal{U}$ by adding the  $(d,m)$-node $\btheta$ with the number $n$ in it. 
We have
$$E_{_{{\mathcal{U}}}} \prod_{ \begin{array}{l}\scriptstyle{\btheta\in 
{\mathcal{E}}_+(\bmu_{n-1})}\\\scriptstyle{\pos^{(d)}(\btheta)\neq \pos_n}\end{array}  }\hspace{-0.2cm}\frac{t_n-\xi_{\pos^{(d)}(\btheta)}}{\xi_{\pos_n}-\xi_{\pos^{(d)}(\btheta)}}
 =  \sum_{ \begin{array}{l}\scriptstyle{\btheta\in 
{\mathcal{E}}_+(\bmu_{n-1})}\\\scriptstyle{\pos(\btheta)=\pos_n}\end{array}  }
E_{_{\mathcal{U}\cup\{\btheta\}}}\ .$$
As moreover $\{\xi_{\pos^{(d)}(\btheta)}\ |\ \btheta\in 
{\mathcal{E}}_+(\bmu)\}=\gad$ (for any $(d,m)$-partition $\bmu$), we deduce that (\ref{idem-JM}) is equivalent to
\begin{equation}\label{idem-JM2}
E_{_{\cT}}=E_{_{{\mathcal{U}}}}\prod_{ \begin{array}{l}\scriptstyle{\btheta\in 
{\mathcal{E}}_+(\bmu_{n-1})}\\\scriptstyle{\cc(\btheta)\neq \cc_n}\\\scriptstyle{\pos^{(d)}(\btheta)=\pos_n}\end{array}}\hspace{-0.2cm} \frac{X_n-\cc(\btheta)}{\cc_n-\cc(\btheta)}\prod_{\begin{array}{l}\scriptstyle{\xi\in 
\gad}\\\scriptstyle{\xi\neq \xi_{\pos_n}}\end{array}  }\hspace{-0.2cm}\frac{t_n-\xi}{\xi_{\pos_n}-\xi}\ .
\end{equation}

Let $\bmu_{i-1}$ be the shape of the standard  $(d,m)$-tableau obtained from $\cT$ by removing the  $(d,m)$-nodes $\btheta_i,\btheta_{i+1},\ldots,\btheta_n$. Repeating the above process for the idempotent $E_{_{{\mathcal{U}}}}$ and so on until we reach the $(d,m)$-tableau $\cT_0$ of size $0$, we obtain
\begin{equation}\label{idem-JM3}
E_{_{\cT}}=\prod_{i=1}^n \, \Biggl( \prod_{ \begin{array}{l}\scriptstyle{\btheta\in 
{\mathcal{E}}_+(\bmu_{i-1})}\\\scriptstyle{\cc(\btheta)\neq \cc_i}\\\scriptstyle{\pos^{(d)}(\btheta)=\pos_i}\end{array}}\hspace{-0.2cm} \frac{X_i-\cc(\btheta)}{\cc_i-\cc(\btheta)}\prod_{\begin{array}{l}\scriptstyle{\xi\in 
\gad}\\\scriptstyle{\xi\neq \xi_{\pos_i}}\end{array}  }\hspace{-0.2cm}\frac{t_i-\xi}{\xi_{\pos_i}-\xi}
\ \Biggr)\ .
\end{equation}
Set
\begin{equation}\label{def-Etp}E_{_{{\mathcal{T}}}}^{\pos}:=\prod_{i=1}^n\!\!\prod_{\begin{array}{l}\scriptstyle{\xi\in 
\gad}\\\scriptstyle{\xi \neq \xi_{\pos_i}}\end{array}  }\hspace{-0.2cm}\frac{t_i-\xi}{\xi_{\pos_i}-\xi}
\ \ \ \ \ \ \ \text{and}\ \ \ \ \ \ \ 
E_{_{{\mathcal{T}}}}^{\cc,i}:=\!\!\!\!\prod_{ \begin{array}{l}\scriptstyle{\btheta\in 
{\mathcal{E}}_+(\bmu_{i-1})}\\\scriptstyle{\cc(\btheta)\neq \cc_i}\\\scriptstyle{\pos^{(d)}(\btheta)=\pos_i}\end{array}}\hspace{-0.2cm} \frac{X_i-\cc(\btheta)}{\cc_i-\cc(\btheta)}\ \ \ \ \ \text{for all $i=1,\dots,n$.}
\end{equation}
Then  Equation (\ref{idem-JM3}) reads:
\begin{equation}\label{form-Et} E_{_{\cT}}=E_{_{\cT}}^{\pos}\,E_{_{\cT}}^{\cc,1}E_{_{\cT}}^{\cc,2}\dots E_{_{\cT}}^{\cc,n}\,.
\end{equation}
The idempotent $E_{_{\cT}}^{\pos}$ determines the $d$-position of each  $(d,m)$-node in $\cT$, while $E_{_{\cT}}^{\cc,i}$ determines the content of the $(d,m)$-node $\btheta_i$, for $i=1,\dots,n$. 

By definition of $E_{_{\cT}}^{\pos}$,  we have that 
\begin{equation}\label{comm-Epos}
t_iE_{_{\cT}}^{\pos}=E_{_{\cT}}^{\pos}t_i=\xi_{\pos_i}E_{_{\cT}}^{\pos}\ \ \ \text{for $i=1,\ldots,n$\,,}
\end{equation}
 and hence,
\begin{equation}\label{multi-Epos}
  e_{i}E_{_{\cT}}^{\pos}=E_{_{\cT}}^{\pos}e_{i}=\left\{ \begin{array}{ll} E_{_{\cT}}^{\pos}, & \text{ if }\,\, \pos_i=\pos_{i+1}\\[0.2em]
 0, & \text{ if }\,\, \pos_i\neq\pos_{i+1}\\ \end{array}\right..
 \end{equation}
 Finally, it is easy to check that 
 \begin{equation}\label{d^n}
 \btau(E_{_{\cT}}^{\pos})=   \prod_{i=1}^n \prod_{ \begin{array}{l}\scriptstyle{\xi\in 
\gad}\\\scriptstyle{\xi\neq \xi_{\pos_i}}\end{array}  }\hspace{-0.2cm}\frac{-\xi}{\xi_{\pos_i}-\xi}\ =
   \prod_{i=1}^n \frac{1}{d} 
  =\frac{1}{d^n}, 
 \end{equation}
since $\prod_{\xi \in \gad\backslash\{1\}}(1-\xi)=d$.

\subsubsection{Calculation of the Schur elements} 

Before we determine the Schur elements for $\cF_m{\rm Y}(d,m,n)$ with respect to $\btau$, we introduce some notation.
 
Recall that the Ariki--Koike algebra ${\rm H}(m,n)$ is the quotient of ${\rm Y}(d,m,n)$ over the relations $t_j=1$, $j=1,\dots,n$ (see Section \ref{sec-def}). The associated surjective homomorphism is denoted by $\pi_{\rm H}$. Recall moreover that ${\rm H}(m,n)$ coincides with ${\rm Y}(1,m,n)$. In order to avoid confusion, from now on, we will denote  by $\og_1,\dots,\og_{N-1},\oX_1$ the generators of the algebra ${\rm H}(m,N)$   (the images of $g_1,\dots,g_{N-1},X_1$ under $\pi_{\rm H}$), for any $N\in\Z_{\geq 0}$. We will also denote by $\og_w$ the image of the element $g_w$ under $\pi_{\rm H}$ for any $w\in\mathfrak{S}_N$.
Finally, we will denote by $\btau^{(0)}_N$ the canonical symmetrising trace on ${\rm H}(m,n)$ (see Remark \ref{cansymtr}). This form was  first constructed  in \cite{BM}.
It was subsequently proved that $\btau^{(0)}_N$ is given by Formula (\ref{form-btau2}) for $d=1$  \cite[Proposition 2.2]{MaMa}  and that it is the unique Markov trace on  ${\rm H}(m,N)$ with all parameters equal to $0$ \cite[Lemma 4.3]{GIM}. Hence, $\btau^{(0)}_N$ coincides with the Markov trace $\btau$ for $d=1$.

Let $\blambda \in \mathcal{P}(d,m,n)$. The $(d,m)$-partition $\blambda=(\blambda^{(1)},\ldots,\blambda^{(dm)})$ can be viewed as a $d$-tuple of $m$-partitions.
For $a=1,\ldots,d$, we denote by $\blambda[a]$ the $a$-th $m$-partition in $\blambda$, that is,
$$\blambda[a]:=(\blambda^{((a-1)m+1)},\ldots,\blambda^{(am)}).$$
 Let $n_a$ denote the size of $\blambda[a]$. Let $V_{\blambda[a]}$ be the irreducible representation of the Ariki--Koike algebra $\cF_m{\rm H}(m,n_a)$ labelled by $\blambda[a]$
and let $s_{\blambda[a]}$ be the Schur element of $V_{\blambda[a]}$ with respect to $\btau^{(0)}_{n_a}$. If $\blambda[a]$ is empty, then $s_{\blambda[a]}:=1$. 

The Schur elements $s_{\blambda[a]}$ (with respect to $\btau^{(0)}_{n_a}$) for the Ariki--Koike algebras have been independently obtained by Geck--Iancu--Malle \cite{GIM} and Mathas \cite{Ma}. For their simplest existing formula, the reader should refer to \cite{ChJa}.

\begin{prop}\label{Schur elements}
Let $\blambda \in \mathcal{P}(d,m,n)$.
We have
 \begin{equation}\label{schur-multipartition}
s_{\blambda}= d^n\,s_{\blambda{[1]}} s_{\blambda{[2]}} \cdots s_{\blambda{[d]}}.
\end{equation}
\end{prop}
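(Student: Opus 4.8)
The plan is to compute the Schur element $s_{\blambda}$ directly from its characterisation via the primitive idempotent $E_{_{\cT}}$, namely from $\btau(E_{_{\cT}})=1/s_{\blambda}$ in (\ref{schur}), by choosing a convenient standard $(d,m)$-tableau $\cT$ of shape $\blambda$. Concretely, I would choose $\cT$ so that the numbers $1,\ldots,n_1$ fill the $m$-partition $\blambda[1]$, the numbers $n_1+1,\ldots,n_1+n_2$ fill $\blambda[2]$, and so on; moreover, inside each block I would take $\cT$ restricted to $\blambda[a]$ to be (up to shifting the entries) a fixed standard $m$-tableau $\cT_a$ of shape $\blambda[a]$. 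With this choice, the $d$-positions $\pos_1,\ldots,\pos_n$ are constant on each block (equal to some fixed root of unity $\xi_{k_a}$ on the $a$-th block), and the factorisation (\ref{form-Et}), $E_{_{\cT}}=E_{_{\cT}}^{\pos}\,E_{_{\cT}}^{\cc,1}\cdots E_{_{\cT}}^{\cc,n}$, separates the ``framing part'' $E_{_{\cT}}^{\pos}$ from the ``content part''.

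The key computational step is then to relate $\btau(E_{_{\cT}}^{\pos}\,E_{_{\cT}}^{\cc,1}\cdots E_{_{\cT}}^{\cc,n})$ to the product of the Ariki--Koike Schur elements $s_{\blambda[a]}$. For each block $a$, the product of the relevant content factors $E_{_{\cT}}^{\cc,i}$ over $i$ in that block is, after applying $\pi_{\rm H}$, precisely the image (under the quotient $\pi_{\rm H}$ and the identification ${\rm H}(m,n_a)={\rm Y}(1,m,n_a)$) of the primitive idempotent $E^{\rm AK}_{\cT_a}$ of $\cF_m{\rm H}(m,n_a)$ associated to $\cT_a$ — one should check this by comparing (\ref{idem-JM3}) with the analogous Ariki--Koike formula, using that inside the $a$-th block the condition $\pos^{(d)}(\btheta)=\pos_i$ in the product in (\ref{def-Etp}) is automatically satisfied by all addable nodes of the same $d$-position, so the content product ranges over exactly the addable nodes of the underlying $m$-partition. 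Thus $\pi_{\rm H}(E_{_{\cT}})$ equals (a block-diagonal arrangement of) $\prod_a E^{\rm AK}_{\cT_a}$, whose trace under $\btau^{(0)}_{n_a}$ is $1/s_{\blambda[a]}$; and since $\btau^{(0)}_{n_a}$ is multiplicative on the chain (being the Markov trace with zero parameters on the Ariki--Koike algebras, as recalled just before the statement), the trace of the product over all blocks is $\prod_a 1/s_{\blambda[a]}$.

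The final step is to account for the factor $E_{_{\cT}}^{\pos}$ and the passage between $\btau$ on ${\rm Y}(d,m,n)$ and $\btau^{(0)}$ on the Ariki--Koike algebra. Here I would use (\ref{comm-Epos})–(\ref{multi-Epos}): since $E_{_{\cT}}^{\pos}$ is an idempotent in the subalgebra generated by $t_1,\ldots,t_n$ that forces $e_i E_{_{\cT}}^{\pos}=E_{_{\cT}}^{\pos}$ whenever $\pos_i=\pos_{i+1}$ and $=0$ otherwise, multiplication by $E_{_{\cT}}^{\pos}$ in ${\rm Y}(d,m,n)$ has the effect, on the content-carrying factors $g_i X_i^a\cdots$, of making the $e_i$'s act as $1$ inside each block (so the $g_i$'s there satisfy the Hecke relation of the Ariki--Koike algebra) and killing cross-block terms; combined with the recursive formula (\ref{btau-Bind1}) for $\btau$ and with $\btau(E_{_{\cT}}^{\pos})=1/d^n$ from (\ref{d^n}), one obtains $\btau(E_{_{\cT}})=\frac{1}{d^n}\prod_{a=1}^d \btau^{(0)}_{n_a}(E^{\rm AK}_{\cT_a})=\frac{1}{d^n}\prod_{a=1}^d \frac{1}{s_{\blambda[a]}}$, whence $s_{\blambda}=d^n\,s_{\blambda[1]}\cdots s_{\blambda[d]}$. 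The main obstacle I anticipate is making the second step fully rigorous: carefully justifying that under $\pi_{\rm H}$ (together with $E_{_{\cT}}^{\pos}$ collapsing the idempotents $e_i$) the content idempotent $E_{_{\cT}}^{\cc,1}\cdots E_{_{\cT}}^{\cc,n}$ of $\cF_m{\rm Y}(d,m,n)$ really maps to the Ariki--Koike idempotent $\prod_a E^{\rm AK}_{\cT_a}$ — i.e.\ that the structure constants match block by block — and that one may compute $\btau$ after this projection without losing information; an alternative, cleaner route would be to avoid $\pi_{\rm H}$ altogether and instead show directly, using the basis $\mathcal{B}^{\rm AK,can}_{d,m,n}$ of (\ref{symbasis}) and Proposition \ref{prop-btau}, that $\btau$ restricted to the block-diagonal subalgebra $\bigotimes_a \cF_m{\rm Y}(d_a,m,n_a)$-type piece factors as a normalised tensor product of canonical Ariki--Koike traces, but this requires more bookkeeping with the roots of unity $\xi_k$.
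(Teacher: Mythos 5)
Your overall skeleton matches the paper's: choose $\cT$ with the $d$-positions constant on blocks, factorise $E_{_{\cT}}=E_{_{\cT}}^{\pos}E_{_{\cT}}^{\cc,1}\cdots E_{_{\cT}}^{\cc,n}$, reduce each block to an Ariki--Koike Schur element, and extract the factor $d^n$ from $\btau(E_{_{\cT}}^{\pos})$. But the central step of your main route --- passing through the quotient $\pi_{\rm H}$ and ``computing $\btau$ after this projection'' --- does not work, and this is exactly where the real content of the proof lies. The trace $\btau$ on ${\rm Y}(d,m,n)$ does not factor through $\pi_{\rm H}$: for instance $\btau(t_1)=0$ while $\pi_{\rm H}(t_1)=1$ has trace $1$. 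Worse, $\pi_{\rm H}(E_{_{\cT}}^{\pos})=\prod_i\prod_{\xi\neq\xi_{\pos_i}}\frac{1-\xi}{\xi_{\pos_i}-\xi}$ vanishes as soon as some $\xi_{\pos_i}\neq 1$ (the factor with $\xi=1$ is zero), so $\pi_{\rm H}(E_{_{\cT}})$ is generically $0$ and cannot equal a product of Ariki--Koike idempotents. So the claim ``$\pi_{\rm H}(E_{_{\cT}})$ equals a block-diagonal arrangement of $\prod_a E^{\rm AK}_{\cT_a}$, and one may compute $\btau$ after this projection'' is false, not merely in need of justification.

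What the paper does instead is to replace the quotient by a \emph{corner} construction: it shows (Lemma \ref{lem-btau}) that the map $\oX_1\mapsto E_{_{\cT}}^{\pos}X_{r_a+1}$, $\og_i\mapsto E_{_{\cT}}^{\pos}g_{r_a+i}$ is an isomorphism $\Phi_a$ from $\cF_m{\rm H}(m,n_a)$ onto the unital algebra $E_{_{\cT}}^{\pos}\mathcal{A}^{(r_a+1,r_a+n_a)}$ with unit $E_{_{\cT}}^{\pos}$ --- here (\ref{multi-Epos}) makes $e_{r_a+i}$ act as $1$ inside the block so the quadratic Hecke relation holds, exactly the phenomenon you describe informally --- and then proves the key compatibility
\begin{equation*}
\btau\bigl(E_{_{\cT}}^{\pos}E_{_{\cT}}^{(1)}\cdots E_{_{\cT}}^{(a-1)}x_a\bigr)=\btau\bigl(E_{_{\cT}}^{\pos}E_{_{\cT}}^{(1)}\cdots E_{_{\cT}}^{(a-1)}\bigr)\,\btau^{(0)}_{n_a}\bigl(\Phi_a^{-1}(E_{_{\cT}}^{\pos}x_a)\bigr),
\end{equation*}
whose proof is a genuine computation with the basis $\mathcal{B}_n$ of (\ref{symbasis}) and the explicit formula of Proposition \ref{prop-btau}: one expands $E_{_{\cT}}^{\pos}x_a$ in the image of the Ariki--Koike basis, commutes the earlier-block idempotents past the block-$a$ variables, and matches the coefficient of the identity on both sides. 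This is precisely the ``cleaner route'' you mention at the end but do not carry out; without it (or an equivalent argument), the multiplicativity of $\btau$ across blocks --- which you assert by analogy with $\btau^{(0)}$ being ``multiplicative on the chain'' --- is unproved, and it is the heart of the proposition. Iterating this identity with $x_a=E_{_{\cT}}^{(a)}$ and using $\btau(E_{_{\cT}}^{\pos})=1/d^n$ then gives (\ref{schur-multipartition}).
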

\begin{proof}
Let $\cT$ be a standard  $(d,m)$-tableau of shape $\blambda$. For all $j=1,\ldots,n$, we set, for brevity, $\pos_j:=\pos^{(d)}(\cT|j)$. In order to facilitate the computation of $\btau(E_{_{\cT}})$, we will assume that 
\begin{equation}\label{cond-pos}\pos_1=\cdots=\pos_{n_1}=1\,,\ \ \ \pos_{n_1+1}=\cdots=\pos_{n_1+n_2}=2\,,\ \ \  
\ldots\ldots\,,\ \ \pos_{n_1+\cdots+n_{d-1}+1}=\cdots=\pos_{n}=d\ .
\end{equation}
Set $r_a:=n_1 + \cdots + n_{a-1}$ for any $a \in \{1,\ldots,d\}$. Recall the definition (\ref{def-Etp}) of the idempotent $E^{\pos}_{_{\cT}}$ and of the elements $E^{\cc,i}_{_{\cT}}$, $i=1,\dots,n$. We set
\[E^{(a)}_{_{\cT}}:=E^{\cc,r_a+1}_{_{\cT}}\dots E^{\cc,r_a+n_a}_{_{\cT}}\ \ \ \ \ \text{for any $a\in\{1,\dots,d\}$,}\]
where, by convention, $E^{(a)}_{_{\cT}}:=1$ if $n_a=0$. Now, Formula (\ref{form-Et}) reads:
\begin{equation}\label{form-Et2} E_{_{\cT}}=E_{_{\cT}}^{\pos}\,E_{_{\cT}}^{(1)}E_{_{\cT}}^{(2)}\dots E_{_{\cT}}^{(d)}\,.
\end{equation}

For any $i,j\in\{1,\dots,n\}$ with $i\leq j$, we denote by $\mathcal{A}^{(i,j)}$ the subalgebra of $\cF_m{\rm Y}(d,m,n)$ generated by $X_i$ and $g_i,g_{i+1},\dots,g_{j-1}$. Note that, due to the assumption (\ref{cond-pos}), the idempotent $E_{_{\cT}}^{\pos}$ commutes with any element of the subalgebra $\mathcal{A}^{(r_a+1,r_a+n_a)}$ for any $a\in\{1,\dots,d\}$. Therefore, the set of elements $E_{_{\cT}}^{\pos}\mathcal{A}^{(r_a+1,r_a+n_a)}:=\{E_{_{\cT}}^{\pos}\,x_a\ |\ x_a\in\mathcal{A}^{(r_a+1,r_a+n_a)}\}$ forms a unital algebra with unit element $E_{_{\cT}}^{\pos}$. Equation (\ref{btau1}) in the following lemma will imply as a particular case the proposition.

\begin{lem}\label{lem-btau}
Let $a\in\{1,\dots,d\}$. There exists an $\cF_m$-algebra isomorphism $\Phi_a$ from the Ariki--Koike algebra $\cF_m{\rm H}(m,n_a)$ to the algebra $E_{_{\cT}}^{\pos}\mathcal{A}^{(r_a+1,r_a+n_a)}$ given by:
\begin{equation}\label{def-phia}
\oX_1\mapsto E_{_{\cT}}^{\pos}X_{r_a+1}\ \ \ \ \ \ \text{and}\ \ \ \ \ \ \og_i\mapsto E_{_{\cT}}^{\pos} g_{r_{a}+i}\ \ \ \text{for all $i=1,\dots,n_a-1$\,.}
\end{equation}
Moreover, for any $x_a\in \mathcal{A}^{(r_a+1,r_a+n_a)}$, we have
\begin{equation}\label{btau1}\btau(E_{_{\cT}}^{\pos}\,E_{_{\cT}}^{(1)}E_{_{\cT}}^{(2)}\dots E_{_{\cT}}^{(a-1)}\,x_a)=\btau(E_{_{\cT}}^{\pos}\,E_{_{\cT}}^{(1)}E_{_{\cT}}^{(2)}\dots E_{_{\cT}}^{(a-1)})\,\btau^{(0)}_{n_a}\Bigl(\Phi_a^{-1}(E_{_{\cT}}^{\pos}x_a)\Bigr)\ .
\end{equation}
\end{lem}

\begin{proof}[Proof of Lemma \ref{lem-btau}]
Let $a\in\{1,\dots,d\}$. First, Formula (\ref{multi-Epos}) together with the assumption (\ref{cond-pos}) implies that
\begin{equation}\label{hecke-quadr}E_{_{\cT}}^{\pos}g_{r_a+i}^2  = E_{_{\cT}}^{\pos}\left(1+(q-q^{-1}) g_{r_a+i} \right)\ \ \ \ \text{for any $i\in\{1,\dots,n_a-1\}$\,.}
\end{equation}
Moreover, by construction, we have
\[E_{_{\cT}}^{\pos}=\sum_{\cT'}E_{_{\cT'}}\ ,\]
where the sum is over all standard $(d,m)$-tableaux $\cT'$ of size $n$ such that $\pos^{(d)}(\cT'|i)=\pos_i$ for all $i=1,\dots,n$. In particular, again due to the assumption (\ref{cond-pos}), we have $\pos^{(d)}(\cT'|i)\neq \pos^{(d)}(\cT'|r_a+1)$ for any $i\leq r_a$, 
and so $\cc(\cT'|r_a+1) = v_{\pos^{(m)}(\cT'|r_a+1)}$.
It follows (see (\ref{diag-mat}))
that $E_{_{\cT'}}X_{r_a+1}\in\{v_1E_{_{\cT'}},\dots,v_mE_{_{\cT'}}\}$ for such standard $(d,m)$-tableaux $\cT'$. Thus, the following relation is satisfied:
\begin{equation}\label{eq-Xma} E_{_{\cT}}^{\pos}(X_{r_a+1}-v_1)\cdots(X_{r_a+1}-v_m)  = 0\ .
\end{equation}
This ends the verification of the fact that the map $\Phi_a$, given by (\ref{def-phia}), induces an homomorphism of $\cF_m$-algebras from $\cF_m{\rm H}(m,n_a)$ to $E_{_{\cT}}^{\pos}\mathcal{A}^{(r_a+1,r_a+n_a)}$. 

From the linear independence of the set of elements $\mathcal{B}_n$ (see (\ref{symbasis}) for the definition of $\mathcal{B}_n$), it is immediate that the following elements are linearly independent elements of $E_{_{\cT}}^{\pos}\mathcal{A}^{(r_a+1,r_a+n_a)}$:
\[E_{_{\cT}}^{\pos}X_{r_a+1}^{b_1}\dots X_{r_a+n_a}^{b_{n_a}}g_{w_a}\ ,\]
where $b_1,\dots,b_{n_a}\in E_m$ and $g_{w_a}$ is in the subalgebra generated by $g_{r_a+1},\dots, g_{r_a+n_a-1}$. Thus, the dimension of $E_{_{\cT}}^{\pos}\mathcal{A}^{(r_a+1,r_a+n_a)}$ is at least $m^{n_a}\cdot n_a!\,$, which is the dimension of $\cF_m{\rm H}(m,n_a)$. As, moreover, the homomorphism $\Phi_a$ is obviously surjective, we conclude that the dimension of $E_{_{\cT}}^{\pos}\mathcal{A}^{(r_a+1,r_a+n_a)}$ is equal to $m^{n_a}\cdot n_a!$ and in turn that $\Phi_a$ is an isomorphism.

Now, let $x_a\in \mathcal{A}^{(r_a+1,r_a+n_a)}$. We use the basis $\mathcal{B}_{n_a}$ of $\cF_m{\rm H}(m,n_a)$, given by (\ref{symbasis})  for $d=1$ (and $n=n_a$), and we write
\begin{equation}\label{Phi-Etp-xa}\Phi_a^{-1}(E_{_{\cT}}^{\pos}x_a)=\sum \alpha_{b_1,\dots,b_{n_a},w_a}\oX_1^{b_1}\dots \oX_{n_a}^{b_{n_a}}\og_{w_a}\ ,
\end{equation}
where $ \alpha_{b_1,\dots,b_n,w_a}\in \cF_m$ and the sum is over $b_1,\dots,b_{n_a}\in E_m$ and $w_a\in\mathfrak{S}_{n_a}$. Using (\ref{def-phia}), we obtain
\begin{equation}\label{Etp-xa}E_{_{\cT}}^{\pos}x_a=\sum \alpha_{b_1,\dots,b_{n_a},w_a}E_{_{\cT}}^{\pos}X_{r_a+1}^{b_1}\dots X_{r_a+n_a}^{b_{n_a}}\Phi_a(\og_{w_a})\ ,
\end{equation}
where $\Phi_a(\og_{w_a})$ is a product of $E_{_{\cT}}^{\pos}$ and a word $y_{w_a}$ in the generators $g_{r_a+1},\dots, g_{r_a+n_a-1}$.  
Since $E_{_{\cT}}^{\pos}$ is an idempotent and commutes with $E_{_{\cT}}^{(1)},E_{_{\cT}}^{(2)},\dots ,E_{_{\cT}}^{(a-1)}$, we have
\begin{equation}\label{Et-xa}
E_{_{\cT}}^{\pos}\,E_{_{\cT}}^{(1)}E_{_{\cT}}^{(2)}\dots E_{_{\cT}}^{(a-1)}\,x_a=E_{_{\cT}}^{\pos}\,E_{_{\cT}}^{(1)}E_{_{\cT}}^{(2)}\dots E_{_{\cT}}^{(a-1)}\,\Bigl(\sum \alpha_{b_1,\dots,b_{n_a},w_a}X_{r_a+1}^{b_1}\dots X_{r_a+n_a}^{b_{n_a}}y_{w_a}\Bigr)\ .
\end{equation}
By Proposition \ref{prop-btau}, in order to calculate $\btau(E_{_{\cT}}^{\pos}\,E_{_{\cT}}^{(1)}E_{_{\cT}}^{(2)}\dots E_{_{\cT}}^{(a-1)}\,x_a)$, we have to write the right hand side of (\ref{Et-xa}) as a linear combination of elements of the basis $\mathcal{B}_n$, and pick out the coefficient of the unit element (see Remark \ref{rem-btau}). 
However, the element $E_{_{\cT}}^{(1)}E_{_{\cT}}^{(2)}\dots E_{_{\cT}}^{(a-1)}$ belongs to the subalgebra of $\cF_m{\rm Y}(d,m,n)$ generated by $X_1$ and $g_1,\dots,g_{r_a-1}$  (so, in  particular, it commutes with $X_{r_a+1},\dots,X_{r_a+n_a}$).  Hence, we must have
$$\begin{array}{rcl}
\btau(E_{_{\cT}}^{\pos}\,E_{_{\cT}}^{(1)}E_{_{\cT}}^{(2)}\dots E_{_{\cT}}^{(a-1)}\,x_a)&=&\btau(E_{_{\cT}}^{\pos}\,E_{_{\cT}}^{(1)}E_{_{\cT}}^{(2)}\dots E_{_{\cT}}^{(a-1)})\, 
\btau\Bigl(\sum \alpha_{b_1,\dots,b_{n_a},w_a}X_{r_a+1}^{b_1}\dots X_{r_a+n_a}^{b_{n_a}}y_{w_a}\Bigr) \\ & &\\
&=&\btau(E_{_{\cT}}^{\pos}\,E_{_{\cT}}^{(1)}E_{_{\cT}}^{(2)}\dots E_{_{\cT}}^{(a-1)})\, \alpha_{0,\dots,0,1} \ ,
\end{array}$$
where $\alpha_{0,\dots,0,1}$ is the coefficient of $E_{_{\cT}}^{\pos}$ 
in the decomposition (\ref{Etp-xa}) of $E_{_{\cT}}^{\pos}x_a$. As $\alpha_{0,\dots,0,1}$ is also the coefficient of the unit element of $\cF_m{\rm H}(m,n_a)$ in the decomposition (\ref{Phi-Etp-xa}) of $\Phi^{-1}_a(E_{_{\cT}}^{\pos}x_a)$, Formula (\ref{btau1}) follows.
\end{proof}

We return to the proof  of Proposition \ref{Schur elements}. Let $a\in\{1,\dots,d\}$. Note that $E_{_{\cT}}^{(a)}\in\mathcal{A}^{(r_a+1,r_a+n_a)}$. Let $\cT_a$ be the standard $m$-tableau of shape $\blambda[a]$ such that $\cc(\cT_a|j)=\cc(\cT|r_a+j)$ for all $j=1,\ldots,n_a$. From Formula (\ref{def-phia}) for the isomorphism $\Phi_a$, the element $\Phi^{-1}_a(E_{_{\cT}}^{\pos}E_{_{\cT}}^{(a)})$ is equal to the primitive idempotent of $\cF_m{\rm H}(m,n_a)$ corresponding to $\cT_a$ (this idempotent is described by (\ref{idem-JM}) for $d=1$ and $n=n_a$). We deduce that
\begin{equation}\label{sa}
\btau^{(0)}_{n_a}\Bigl(\Phi_a^{-1}(E_{_{\cT}}^{\pos}E_{_{\cT}}^{(a)})\Bigr)=\frac{1}{s_{\blambda{[a]}}}\ \ \ \ \text{for any $a\in\{1,\dots,d\}$\,.} 
\end{equation}
Following Equation (\ref{btau1}), we have 
\begin{equation}\label{recursive for btau}
\btau(E_{_{\cT}}^{\pos}\,E_{_{\cT}}^{(1)}E_{_{\cT}}^{(2)}\dots E_{_{\cT}}^{(a-1)}E_{_{\cT}}^{(a)})=\btau(E_{_{\cT}}^{\pos}\,E_{_{\cT}}^{(1)}E_{_{\cT}}^{(2)}\dots E_{_{\cT}}^{(a-1)})\,\cdot \, \frac{1}{s_{\blambda{[a]}}} \ \ \ \ \text{for any $a\in\{1,\dots,d\}$}.
\end{equation}
Using  (\ref{d^n}) and repeatedly (\ref{recursive for btau}) yields
\begin{equation}
\btau(E_{_{\cT}}^{\pos}\,E_{_{\cT}}^{(1)}E_{_{\cT}}^{(2)}\dots E_{_{\cT}}^{(a)})= \frac{1}{d^n\,s_{\blambda{[1]}} s_{\blambda{[2]}} \cdots s_{\blambda{[a]}}}
\ \ \ \ \text{for any $a\in\{1,\dots,d\}$}.
\end{equation}
For $a=d$, the above formula is the desired result.
\end{proof}

\end{document}